\newtheorem{theorem}{Theorem}[section]
\newtheorem{lemma}[theorem]{Lemma}
\newtheorem{proposition}[theorem]{Proposition}
\newtheorem{corollary}[theorem]{Corollary}
\newtheorem{conjecture}[theorem]{Conjecture}
\theoremstyle{definition}
\newtheorem{definition}[theorem]{Definition}
\theoremstyle{remark}
\newtheorem{remark}[theorem]{Remark}
\numberwithin{equation}{section}
\newcommand{\eps}{\varepsilon}
\newcommand{\bbf}{\mathbf{b}}
\newcommand{\bn}{\mathbf{n}}
\newcommand{\dx}{d{\bf x}}
\newcommand{\Lis}{\cL\mathrm{is}}
\newcommand{\new}[1]{{\color{black}{#1}}}
\newcommand{\outcommented}[1]{}
\newcommand{\bb}{{\bf b}}
\newcommand{\cc}{c}
\newcommand{\dd}{d}
\newcommand{\EE}{{\mathcal E}_\tria}
\newcommand{\DD}{\mathbb{D}}
\DeclareMathOperator*{\argmin}{argmin}
\DeclareMathOperator{\supp}{supp}
\DeclareMathOperator{\diam}{diam}
\DeclareMathOperator{\ran}{ran}
\DeclareMathOperator{\divv}{div}
\newcommand{\tvert}{\vert\!\vert\!\vert}
\newcommand{\R}{\mathbb R}
\newcommand{\N}{\mathbb N}
\newcommand{\U}{\mathbb U}
\newcommand{\F}{\mathbb F}
\newcommand{\V}{\mathbb V}
\newcommand{\T}{\mathbb T}
\newcommand{\cL}{\mathcal{L}}
\newcommand{\TtT}{t}
\newcommand{\cB}{\mathcal B}
\newcommand{\cP}{\mathcal P}
\newcommand{\cM}{\mathcal{M}}
\newcommand{\tria}{\mathcal T}
\newcommand{\bx}{\mathbf{x}}
\newcommand{\bs}{\mathbf{s}}
\newcommand{\by}{\mathbf{y}}
\newcommand{\refs}{{r}}
\newcommand{\GG}{{G}}
\newcommand{\brR}{\breve{R}}
\def\Tr{\mathcal{T}}
\def\Trs{{\mathcal{T}_s}}
\def\tTr{\tilde{\mathcal{T}}}
\def\tTrs{{\tilde{\mathcal{T}}_s}}
\newcommand{\sll}{\langle\!\langle}
\newcommand{\srr}{\rangle\!\rangle}
\newcommand{\uPG}{u_\tria^\delta}
\newcommand{\wPG}{w_\tria^\delta}
\newcommand{\tuPG}{u_{\tilde{\tria}}^\delta}
\newcommand{\twPG}{w_{\tilde{\tria}}^\delta}
\newcommand{\be}{\begin{equation}}
\newcommand{\ee}{\end{equation}}
\newcommand{\brbrR}{\breve{\raisebox{0pt}[8pt][0pt]{$\breve{\raisebox{0pt}[6.3pt][0pt]{$R$}}$}}}
\newcommand{\bbb}{\bb^\circ}
\newenvironment{Array}{
 \everymath{\displaystyle\everymath{}}
 
 \array
 }
{\endarray }
\DeclareFontFamily{U}{mathx}{\hyphenchar\font45}
\DeclareFontShape{U}{mathx}{m}{n}{
      <5> <6> <7> <8> <9> <10>
      <10.95> <12> <14.4> <17.28> <20.74> <24.88>
      mathx10
      }{}
\DeclareSymbolFont{mathx}{U}{mathx}{m}{n}
\DeclareMathAccent{\widecheck}{0}{mathx}{"71}
\DeclareMathAccent{\wideparen}{0}{mathx}{"75}
\title{Adaptive Strategies for Transport Equations}\thanks{
Both authors have been supported in part by NSF Grant  DMS 1720297. In addition, 
the first author has been supported in
part by 
the DFG Research Group 1779,  and by the SmartState and Williams-Hedberg Foundation}
\date{\today}
\author{W. Dahmen, R.P. Stevenson}
\address{Mathematics Department, University of South Carolina, Columbia, SC 29208, USA }
\email{dahmen@math.sc.edu}
\address{
Korteweg-de Vries Institute for Mathematics,
University of Amsterdam,
P.O. Box 94248,
1090 GE Amsterdam, The Netherlands}
\email{r.p.stevenson@uva.nl}
\subjclass[2010]{
65N12, 
65N30, 
35A15, 
35F05
}
\keywords{Discontinuous Petrov Galerkin-formulation of transport equations, optimal and near-optimal test spaces, stability}
\begin{document}

\begin{abstract} 
This paper is concerned with a posteriori error bounds for linear transport equations and related questions of contriving corresponding adaptive solution strategies in the context of Discontinuous-Petrov-Galerkin schemes. After indicating our motivation for this investigation in a wider context the first major part of the paper is devoted to the derivation and analysis of a posteriori error bounds \new{that, under mild conditions on variable convection fields, are efficient and, modulo a data-oscillation term, reliable.} In particular, it is shown that these error estimators are computed at a cost that stays uniformly proportional to the problem size. The remaining part of the paper is then concerned with the question whether typical bulk criteria known from adaptive strategies for elliptic problems entail a fixed error reduction rate also in the context of transport equations. This turns out to be significantly more difficult than for elliptic problems and at this point we can give a  complete   affirmative answer  for a single spatial dimension. For the  general multidimensional case we provide partial results which we find of interest in their own right. An essential distinction from known concepts is that global arguments enter the issue of error reduction. An important ingredient of the underlying analysis, which is perhaps interesting in its own right, is to relate the derived error indicators to the residuals that naturally arise in related least squares formulations. This reveals a close interrelation between both settings regarding error reduction in the context of adaptive refinements.
\end{abstract}

\maketitle

\section{Introduction}\label{sec:1}
\paragraph{\bf Motivation, Goals:}
Adaptive solution concepts form an important component in strategies for ever advancing computational frontiers by generating
discretizations whose solutions have a desired quality (e.g. in terms of accuracy) at the expense of a possibly small problem
size, viz. number of degrees of freedom. Guaranteeing a certain performance and certifying the solution quality poses intrinsic
mathematical challenges that have triggered  numerous investigations.

It is fair to say that the most workable starting point for an adaptive method is a {\em variational formulation} of the problem at hand
that allows one to relate errors - involving the unknown solution - to residuals - involving only known quantities. A little wrinkle lies in the
fact that these residuals have to be typically evaluated in {\em dual norms} that are not  straightforward to compute. 
A first important goal is therefore
(A) to evaluate or approximate these residual quantities in a tight fashion, see e.g. the fundamental work of Verf\"urth \cite{307}.
By {\em tight} we mean in what follows that \new{modulo a data oscillation term} the {\em a posteriori bounds} are reliable as wells as efficient, i.e., up to moderate constant
multiples provide upper as well as lower bounds for the error \new{plus data oscillation}.
This by itself is important since it allows one to quantify the solution accuracy for a given discretization without a priori knowledge
about the solution such as norms of its derivatives. Aside from minimizing the size of discrete problems for a given target accuracy 
via adaptive strategies based
on such error bounds, the availability of certified bounds is essential in a {\em nested iteration} context which is sometimes the only viable 
strategy for obtaining quantifiable results within a given computational budget.

As part of an adaptive strategy a second, often  mathematically even more demanding goal (B) is to contrive a suitable  {\em mesh refinement strategy}  derived from the a posteriori residual quantities and to understand its convergence and complexity properties.   
The first step in this regard is to show that  each step of such a refinement does decrease the current error by a fixed factor.
 In many works on adaptive methods this last issue is often ignored or taken for granted when using a 
``plausible'' refinement strategy based on a posteriori indicators. However, in the context  of
highly convection dominated convection diffusion problems it is shown in
  \cite{CDW12}
that \new{an} error reduction can be  delayed until full resolution
of boundary layers  is established,  despite the fact that robust efficient and reliable error estimators are used.

 Once a fixed error reduction rate is established one then estimates in a second step the increase of degrees of freedom caused by the refinement.
 \\

\paragraph{\bf Background:}
Both steps (A) and (B) are so far best understood for problems of elliptic type and their close relatives, see e.g. \cite{BDD04,NSV09,S07}. 
By this we mean, in particular, variational
formulations involving {\em isotropic} function spaces that are essentially independent of problem parameters. Moreover, these 
variational formulations appear more or less in a natural way and lead to problems that are well conditioned (on the continuous infinite dimensional
level) in a sense to be made precise later. This luxury is lost abruptly already when dealing with simple linear transport equations.
Our particular interest in the seemingly simple model of first order  steady state linear transport equations stems from the following points.
First, classical techniques for transport equations do typically not come with tight a posteriori error bounds, let alone a rigorously founded
adaptive solution strategy.
Second, linear transport equations  form a core constituent of important kinetic models whose treatment would benefit from    
the availability of tight a posteriori error bounds because they would warrant a rigorous control of nested source term iterations 
avoiding the inversion of large linear sytems which are densely populated due to global scattering operators. 
Last but not least, linear transport equations can be viewed as a limit case of convection dominated convection diffusion equations.
Thus, appropriate variational formulations are instructive for the singularly perturbed versions as well.
We are content here with the time-independent formulations since corresponding variational formulations 
 would immediately offer   space-time formulations
for the   time dependent case where initial conditions enter as "inflow-boundary conditions".

The classical footing for rigorous a posteriori bounds is a variational formulation of the underlying (infinite-dimensional) problem for
which the induced operator is an isomorphism from the trial space onto the dual of the test space. This means errors in the trial metric are
equivalent to residuals in a dual test-norm which at least in principle contains only known quantities and hence is amendable to a numerical evaluation.
For transport equations, the lack of {\em any} diffusion is well known to cause standard Galerkin formulations being extremely ill-conditioned. This
results in notoriously unstable schemes which precludes the availability of obvious tight lower and upper
a posteriori error bounds. Instead, suitable variational formulations that could give rise to tight residual a posteriori
bounds need to be {\em unsymmetric}, i.e., trial and test metrics differ from each other. 
In this regard the Discontinuous Petrov Galerkin (DPG) concept offers a promising framework
to accommodate problem classes that are not satisfactorily treated by conventional schemes, i.e., they help identifying and numerically accessing
suitable pairs of trial and test spaces.
A concise discussion of DPG methods \new{involves two} stages:
first, in contrast to ordinary DG methods it is important  to start from a {\em mesh dependent infinite dimensional}  variational formulation
which has to be shown to be uniformly inf-sup stable with respect to the underlying meshes. The proper choice of function spaces
for the bulk as well as skeleton quantities is crucial. Second, the optimal test spaces that inherit for a given {\em finite dimensional} trial space
the stability of the infinite dimensional problem are not practical. A computational version requires replacing local infinite dimensional
test-search spaces by finite dimensional ones whose size, however, determines the computational cost. There are to our knowledge
only a few results guaranteeing uniform fully discrete stability. In the DPG context, this concerns on the one hand problems of elliptic type and their close
relatives in the sense that the involved functions spaces are isotropic \cite{75.61,37.4}.
On the other hand,  we have studied in \cite{35.8566} an essentially different problem class, namely first order linear transport problems.
There, we have proposed  a {\em fully discrete} Discontinuous Petrov Galerkin (DPG) scheme for linear transport equations
with variable convection fields
that is shown to be {\em uniformly inf-sup stable} with respect to hierarchies of shape regular meshes. The perhaps most noteworthy obstruction
encountered in this context
 is the fact that the involved function spaces are {\em anisotropic} and depend on the convection field in an essential way.
This means, for instance,  that when perturbing the convection field the test spaces not only vary with respect to the norms but even as sets.
\new{This affects, in particular, the issue of data oscillation.}
Therefore, the case of variable convection fields is rather delicate and requires a very careful organization of perturbation arguments, see
\cite{35.8566}. The present work builds on the findings in \cite{35.8566}.\\

\paragraph{\bf Objectives, Results, Layout of the Paper:}
The central objectives of this paper concern 
both goals (A) and (B) for linear first order transport equations. In Section \ref{sec:2} we briefly recall the basic DPG concepts the
remaining developments will be based upon. This includes the notion of {\em projected optimal test spaces} as well as the
principal elements of error estimation with the aid of {\em lifted residuals}.

In Section \ref{Sbroken} we detail the ingredients of the transport problem and recall from \cite{35.8566}
a corresponding DPG scheme. The level of technicality observed there is in our opinion unavoidable and stems from the
three stages of the DPG concept mentioned above. To ease accessibility of the material  and fix notation we recall from \cite{35.8566} some relevant results which the subsequent discussion will build upon. 

Section \ref{sec:apost} is devoted to goal (A) the derivation of {\em efficient and reliable} (in brief "tight") a posteriori error bounds.
DPG-schemes are often perceived as providing "natural" local error indicators ready to use for adaptive refinements.
Of course, once the uniform well-posedness of the infinite-dimensional DPG-formulation has been established the error in the trial metric is 
indeed equivalent
to a Riesz-lifted residual which is in fact a sum of local terms. However, in exactly the same way as for optimal test-functions, these quantities
require solving local {\em infinite-dimensional} Galerkin problems. Again, one has to develop a practical variant using appropriate
finite-dimensional test-search spaces. To ensure a proper complexity scaling these spaces should again have a fixed uniformly bounded
finite dimension. An improper choice of such test-search spaces could result in gross under-estimation of the actual error.
Thus, the central issue here is to rigorously ensure that the so called "practical" versions using localized test-search spaces of fixed
finite dimension do actually capture the true infinite-dimensional residual well enough to quantitatively  reflect the error \new{plus a data oscillation term}.
This is done in Section \ref{sec:apost}
 for {\em variable} convection fields under the same moderate regularity conditions as used for the uniform inf-sup stability.
Again, a central issue here is a very subtle perturbation strategy that is eventually able to cope with the essential dependence 
of the test spaces on the convection field and the fact that the perturbations are only meaningful on the finite-dimensional level.

Finally, in Section \ref{sec:red} we address goal (B).  
As indicated earlier, the situation differs in essential ways from the key mechanisms that work for 
elliptic problems.   
A key obstruction, shared with least-squares methods for other problem types, is the fact that the error indicators do not
explicitly contain any power of the local meshsize. Hence, it is now far from obvious that a fixed local refinement
actually reduces the error indicator or the error itself. This means   establishing a {\em fixed error reduction} being guaranteed by
a concrete refinement strategy becomes the main issue. In fact, we anticipate that, once error reduction
is in place the analysis of the overall complexity will then follow again along more established paths.
Therefore, we concentrate in  Section \ref{sec:red} on error reduction. The main tools are carefully exploiting 
what may be called "Petrov-Galerkin orthogonality", and local piecewise polynomial approximations. The central focus point emerging from
related attempts, however, is the fact that the tight a posteriori error indicators are actual equivalent to an entirely {\em mesh-free} indicator
of least squares type. In fact, this latter indicator may be viewed as a certain "limit" of the DPG-indicators resulting from different approximate Riesz-lifts. This connection is in our opinion
of interest in its own right.
Using these concepts, we  rigorously prove that refinement strategies based on a standard bulk criterion
imply error-reduction in a single spatial dimension. For several space dimensions we formulate an analogous result 
for collections of marked cells which in certain cases are enriched in downstream direction. The necessity of such enrichments
is, however, open. 

Since the focus of this work is on revealing the intrinsic theoretical mechanisms we dispense with numerical tests but hope
that our findings offer new insight  and will prove useful for eventually extending the current state of the art. We present in Section \ref{sec:6}   some
concluding remarks addressing, in particular, the relation between DPG and least squares schemes. 

 We sometimes write $a \lesssim b$ to express that $a$ can be bounded by a fixed constant multiple of $b$ where the multiplicative factor
 is independent of the relevant parameters $a$ and $b$ may depend on. Likewise $a\eqsim b$ means that both $a\lesssim b$ and $b \lesssim a$ 
 hold.

\section{Abstract setting and preliminary observations}\label{sec:2}
Transport dominated problems are prominent instances where {\em symmetric} variational formulations - trial and test space coincide - 
fail to provide well-conditioned problems already on the continuous level. This section serves two purposes. First, we briefly recap some
preliminaries about {\em unsymmetric} Petrov-Galerkin formulations  which, in particular, {\em Discontinuous Petrov Galerkin} (DPG) schemes
are based upon. Second, we collect  some general basic facts that will be used later in the a posteriori error analysis.

\subsection{Petrov-Galerkin formulation with projected optimal test spaces} \label{SPG}
\newcommand{\Vdel}{\V^\delta}
\newcommand{\Udel}{\U^\delta}
\newcommand{\tVdel}{\tilde{\Vdel}}
\newcommand{\VDel}{\V^\Delta}
\newcommand{\UDel}{\U^\Delta}
\newcommand{\uex}{u^{\rm ex}}
\newcommand{\bbV}{\bar{\bar{\V}}}

Let $\U$, $\V$ be Hilbert spaces and $b:\U \times \V \rightarrow \R$ a continuous bilinear form, i.e., 
$$
|b(u;v)| \le C_b\|u\|_\U \|v\|_\V,\quad u\in \U, \, v\in \V.
$$
This means that  $(\cB u)(v):=b(u;v)$ induces a bounded linear operator from $\U$ to $\V'$, the normed dual  of $\V$, endowed  as usual  with the 
 norm $\|z\|_{\V'}:= \sup_{v\in\V: \|v\|_\V=1}z(v)$.   Moreover, let us assume that $\cB$ is an isomorphism which we
 express by writing $\cB \in \Lis(\U,\V')$. It is well-known that this latter property is equivalent to the validity of the inf-sup conditions
 \be
 \label{inf-sup}
 \inf_{u\in\U}\sup_{v\in\V}\frac{b(u;v)}{\|u\|_\U \|v\|_\V}\ge \gamma, \quad  \inf_{v\in\V}\sup_{u\in\U}\frac{b(u;v)}{\|u\|_\U \|v\|_\V}\ge \gamma,
 \ee
 for some $\gamma >0$. One consequence of the entailed stability is the relation
 \be
 \label{residual}
C_b^{-1}
\|f- \cB w\|_{\V'} \le \|\uex -w\|_\U \le \gamma^{-1}\|f- \cB w\|_{\V'} ,\quad w\in\U,
 \ee
where $\uex =\cB^{-1}f$ is the exact solution of the problem: find $u\in\U$ such that
 \be
 \label{exact}
 b(u;v)= f(v)\quad v\in \V.
 \ee
 Clearly, \eqref{residual} is a natural starting point for deriving a posteriori bounds.
 The tightness of such bounds depends on the condition (number)
 \be
 \label{cond}
 \kappa_{\U,\V'}(\cB):=  \|\cB\|_{\cL(\U,\V')} \|\cB^{-1}\|_{\cL(\V',\U)} \le C_b/\gamma
 \ee
  of the problem \eqref{exact} which can equivalently be expressed as the operator equation $\cB u=f$.
 
 When trying to approximate $\uex$ by some element in a finite dimensional trial space $\Udel\subset \U$ (`$\delta$' refers to `discrete') the choice of the test space becomes
 a central issue. A by now well established mechanism is to choose a so called {\em test search space} $\bar{\V}^\delta \subseteq \V$ of dimension typically larger than ${\rm dim}\,\Udel$, for which
\be
\label{inf-supdel}
\bar{\gamma}^\delta:= \inf_{0 \neq u \in \U^\delta} \sup_{0 \neq v \in \bar{\V}^\delta}\frac{b(u;v)}{\|u\|_\U \|v\|_\V}>0.
\ee
Clearly, $\bar{\V}^\delta=\V$ would yield $\bar{\gamma}^\delta =\gamma$ so that the size of $\bar{\V}^\delta$ can be viewed as the ``invested stabilization''.
 Defining then the {\em trial-to-test map} $\TtT^\delta=\TtT^\delta(\bar{\V}^\delta) \in \cL(\U,\bar{\V}^\delta)$ by
\be
\label{TTTmap}
\langle \TtT^\delta u,v\rangle_\V=b(u;v) \quad (v \in \bar{\V}^\delta),
\ee
the function $\TtT^\delta u$ is the $\V$-orthogonal projection onto $\bar{\V}^\delta$ of the {\em optimal test function} $R^{-1} \cB u$, where $R^{-1}\colon \V' \rightarrow \V$ is the inverse Riesz map (or Riesz lift). The space
\be
\label{projopt}
\V^\delta=\V^\delta(\U^\delta,\bar{\V}^\delta):=\ran \TtT^\delta|_{\U^\delta}
\ee
is called  {\em projected optimal test space} because  $\TtT^\delta u$ is the $\V$-orthogonal projection onto $\bar{\V}^\delta$ of the {\em optimal test function}''. Also note that $\frac{b(u;\TtT^\delta u)}{\|\TtT^\delta u\|_\V}=\|\TtT^\delta u\|_\V=\sup_{0 \neq v \in \bar{\V}^\delta}\frac{b(u,v)}{\|v\|_{\V}}$, so $\V^\delta$ gives the same inf-sup constant as $\bar{\V}^\delta$.  Once \eqref{inf-supdel} has been
established for $\bar{\V}^\delta$,  the problem of finding the {\em Petrov-Galerkin solution} $u^\delta=u^\delta(f,\U^\delta,\V^\delta)
\in \U^\delta$ of
\be
\label{PG}
b(u^\delta;v)=f(v) \quad (v \in \V^\delta)
\ee
is for any
$f \in \V'$  well-posed. Moreover, the solution of \eqref{PG} yields, up to a factor $C_b/\bar{\gamma}^\delta$ (bounding $\kappa_{\U,\V'}(\cB)
$)   the best approximation to $\cB^{-1} f$ from $\U^\delta$. Here and below we use the superscript $\delta$ to refer to a discretization
or better finite dimensional problems.

In summary, it would of course be highly desirable to guarantee uniform stability in $\delta$, i.e., $\gamma^\delta \ge \underline{\gamma}>0$
in \eqref{inf-supdel},  while keeping the computational work
proportional to the dimension ${\rm dim}\,\Udel$ of the trial spaces, viz. the number of degrees of freedom. This requires
a uniform bound for the test-search spaces of the form ${\rm dim}\,\new{\bar{\V}^\delta} \lesssim {\rm dim}\,\Udel$.
In \cite{35.8566} this has been shown for linear transport problems with variable convection fields which the present 
work will heavily build on, see also Section \ref{Sbroken}.

\subsection{Error estimation} \label{Serrorest}
The accuracy of the Petrov-Galerkin solution $u^\delta\in \Udel$ is, in view of \eqref{residual}, estimated from below and above by
the residual $f-\cB u^\delta$ in $\V'$ whose evaluation  would require computing the {\em supremizer}
\be 
\label{Rexact}
\langle R(u^\delta;f),v\rangle_\V = b(u^\delta;v) - f(v),\quad v\in \V,
\ee
since $\| R(u^\delta;f)\|_\V= \|f- \cB u^\delta\|_{\V'}$. We refer to $R(u^\delta;f)$ as a {\em lifted residual}.
 The exact computation of $R(u^\delta;f)$ is, of course, not possible. However,
  to obtain a quantity that is at least uniformly proportional to $\| R(u^\delta;f)\|_\V$ one can proceed as in \eqref{TTTmap}.
  
  To that end, let us first 
  suppose 
  that $f$ is contained in a finite dimensional subspace $\F^\delta$ of $\V'$ \new{with $\dim \F^\delta \eqsim \dim \U^\delta$.}
Now let $\bar{\bar{\V}}^\delta \subset \V$ be a closed subspace, that we call the  {\em lifted residual search space},
such that
\be
\label{res-inf-sup}
\bar{\bar{\gamma}}^\delta:= \inf_{\{(u,f) \in \U^\delta\times\F^\delta\colon \cB u\neq f\}}\sup_{0 \neq v \in \bar{\bar{\V}}^\delta}\frac{b(u;v)-f(v)}{\|u-\cB^{-1} f\|_\U \|v\|_\V}>0.
\ee
In analogy to \eqref{TTTmap} we then define $R^\delta=R^\delta(\bar{\bar{\V}}^\delta):\U \times \V'\rightarrow \bar{\bar{\V}}^\delta$ by
\be
\label{Rdelta}
\langle R^\delta(u;f),v\rangle_\V=b(u;v)-f(v)= b(u- \cB^{-1}f;v) \quad (v \in \bar{\bar{\V}}^\delta).
\ee
We call $R^\delta(u;f)$ the {\em projected} {\em lifted residual} since it is the $\V$-orthogonal projection of
the exact lifted residual \eqref{Rdelta} onto $\bar{\bar{\V}}^\delta$.
For $(u,f) \in \U^\delta \times \F^\delta$, it holds that
\be
\label{prcabound}
\bar{\bar{\gamma}}^\delta \|u-\cB^{-1} f \|_\U \leq \|R^\delta(u;f)\|_\V \leq \|\cB\|_{\cL(\U,\V')}\|u-\cB^{-1} f\|_\U.
\ee
Thus the quantities $\|R^\delta(u;f)\|_\V$ provide computable upper and lower bounds for the error $\|u-\cB^{-1} f \|_\U$
incurred by an approximation $u\in \Udel$ to the exact solution $\uex =\cB^{-1}f$.  

\new{Regarding stable} DPG formulations of the transport problem, Section \ref{sec:apost} is devoted
to  identifying suitable   \new{lifted residual search spaces}  $\bbV^\delta$ for which \eqref{res-inf-sup} will be shown to hold,  {\em uniformly} in $\delta$.
 \new{In order to do so, just as for $f$ we will need that the coefficients of transport problem \new{belong to} certain finite dimensional spaces with dimensions proportional to $\dim \U^\delta$.
  Consequently, for general data, i.e. \new{right hand side} $f$ \new{as well as convection and reaction} coefficients, 
 the lower bound in \eqref{prcabound} will be valid modulo a data oscillation term that measures the distance between this data and their best approximations from the aforementioned finite dimensional spaces.}

\subsection{Towards error reduction}\label{sec:2.3}
By replacing both $\bar{\V}^\delta$ and $\bar{\bar{\V}}^\delta$ by their sum $\bar{\V}^\delta+\bar{\bar{\V}}^\delta$, from here on we will assume that $\bar{\bar{\V}}^\delta=\bar{\V}^\delta$.
Then the relation 
\be
\label{equal}
R^\delta(u_1;f)-R^\delta(u_2;f)=\TtT^\delta (\new{u_1-u_2}), \quad f \in \V', \, u_1, u_2 \in \U,
\ee
follows 
directly from the definitions of $R^\delta$ and $\TtT^\delta$.

For the Petrov-Galerkin solution $u^\delta=u^\delta(f,\U^\delta,\V^\delta) \in \U^\delta$, {\em Petrov-Galerkin orthogonality}
 $\langle R^\delta(u^\delta;f),\TtT^\delta(\U^\delta)\rangle_\V=0$ yields for
{\em any} $u \in \U^\delta$,
\be
\label{PGorth}
\|R^\delta(u^\delta;f)\|_\V^2=\|R^\delta(u;f)\|_\V^2-\|\TtT^\delta (u-u^\delta)\|_\V^2.
\ee
\begin{remark}
\label{rem:PGorth}
In particular, $u^\delta$ minimizes $\|R^\delta(\cdot;f)\|_\V$  over $\U^\delta$.
 \end{remark}

\section{A variational formulation of the transport equation with broken test and trial spaces} \label{Sbroken}
For the convenience of the reader and to fix notation we briefly recall in this section the results from \cite{35.8566}
to ensure the validity of the stability relations \eqref{inf-sup} and \eqref{inf-supdel} which all subsequent developments will be based upon.
\subsection{Transport equation} \label{Stransport}
We adhere to the setting considered in \cite[Section 2]{35.8566} and let $\Omega \subset \R^n$ be a  bounded  {polytopal} domain,  ${\bf b} \in L_\infty(\divv;\Omega)$, and $c \in L_\infty(\Omega)$.
\new{Here we set $L_\infty(\divv;\Omega):=W^0_\infty(\divv;\Omega)$ where   ${\bf b}\in W^k_\infty(\divv;\Omega)$   means that both $\divv {\bf b}$ and each ${\bf b}_i$ belong to $W^k_\infty(\Omega)$.}
As usual the outflow/inflow
boundary 
$\Gamma_\pm$ is  the closure of all those points on $\partial\Omega$ for which the outward unit normal $\bn$ is well defined and
$\pm \bn\cdot \bb >0$ while $\Gamma_0 = \partial\Omega \setminus (\Gamma_- \cup \Gamma_+)$ stands for the characteristic boundary.
We consider
the transport equation 
\begin{equation} 
\label{transport}
\left\{
\begin{array}{r@{}c@{}ll}
{\bf b}\cdot \nabla u +c u&\,\,=\,\,& f &\text{ on } \Omega,\\
u&\,\,=\,\, & g &\text{ on } \Gamma_{\!-}.
\end{array}
\right.
\end{equation}
To explain in which sense $u$ is to solve \eqref{transport} the space
$$
H({\bf b};\Omega):=\{u \in L_2(\Omega)\colon{\bf b}\cdot \nabla u \in L_2(\Omega)\},
$$
equipped with  the norm $\|u\|_{H({\bf b};\Omega)}^2:=\|u\|_{L_2(\Omega)}^2+\| {\bf b}\cdot \nabla u\|_{L_2(\Omega)}^2$,
plays a crucial role. More precisely, we need to work with the closed subspaces 
$H_{0,\Gamma_\pm}(\bb;\Omega)$ obtained by taking the closure of smooth functions vanishing on $\Gamma_\pm$,
respectively, under the norm $\|\cdot\|_{H({\bf b};\Omega)}$. In fact, for $g=0$ a first canonical variational formulation of \eqref{transport} is
to find $u\in H_{0,\Gamma_-}(\bb;\Omega)$ such that
\be
\label{first}
\int_\Omega  ({\bf b} \cdot \nabla u +c u) v \,d{\bf x}=\int_\Omega  f v\,d{\bf x}
\ee
holds for all smooth test functions $v\in C^\infty(\bar\Omega)$. Alternatively, after integration by parts one looks for $u\in L_2(\Omega)$
such that
\be
\label{second}
\int_\Omega (cv - \divv v {\bf b})u \,d{\bf x}=\int_\Omega  f v -\int_{\Gamma_{\!-}} g v {\bf b}\cdot {\bf n}\,d{\bf x}
\ee
holds for all $v\in H_{0,\Gamma_+}(\bb;\Omega)$, where now the inflow boundary condition enters as a natural boundary condition.
The second summand on the right hand side vanishes of course for $g=0$ which is the case we will focus on for convenience in what follows,
see the discussion in  \cite{35.8566}.

Accordingly, these formulations induce bounded operators
\be
\label{ops}
\begin{split}
\cB\colon & u \mapsto {\bf b} \cdot \nabla u + cu  \in \cL(H_{0,\Gamma_-}(\bb;\Omega),L_2(\Omega)),\\
\cB^\ast\colon & v \mapsto c v-\divv{v {\bf b}} \in \cL(H_{0,\Gamma_+}(\bb;\Omega),L_2(\Omega)).
\end{split}
\ee
We stress that $\cB^\ast$ is the {\em formal} adjoint of $\cB$. In fact,  the ``true'' adjoint $\cB'$ would have to be considered
as an element of $\cL(L_2(\Omega),H_{0,\Gamma_-}(\bb;\Omega)')$.  Moreover, $\cB^\ast$ is the "true" adjoint of the 
transport operator considered as a mapping in $\cL(L_2(\Omega),H_{0,\Gamma_+}(\bb;\Omega)')$.
In view of these distinctions $\cB$ and $\cB^\ast$ may in general have different properties
in terms of invertibility.

Since we do not strive for identifying the weakest possible assumptions on the problem parameters under which
both mappings are invertible we adopt this in what follows as an {\em assumption} 

\begin{align} \label{14}
\cB& 
\in \Lis(H_{0,\Gamma_{\!-}}({\bf b}; \Omega),L_2(\Omega)),\\
\cB^\ast& 
 \in \Lis(H_{0,\Gamma_{\!+}}({\bf b}; \Omega),L_2(\Omega)). \label{15}
\end{align}
where $\Lis(\U,\V)$ denotes the space of linear isomorphisms from $\U$ onto $\V$
 and refer to  e.g. \cite{35.8566,58.3} 
for concrete conditions on the problem parameters under which these assumptions are valid. Assumption \eqref{14} is essential for the
stability of the subsequent DPG scheme.
Finally, we note that the true adjoint of $\cB^\ast$, in turn, 
belongs to $\cL(L_2(\Omega),H_{0,\Gamma_+}(\bb;\Omega)')$ and can be viewed as an extension of 
$\cB$ to $L_2(\Omega)$. 
\subsection{DPG formulation of \eqref{transport}} \label{SDPG}
For a polyhedral $\Omega$ let $\T$ denote an (infinite) family of partitions $\tria$ of $\bar{\Omega}$ into essentially disjoint closed $n$-simplices {that can be created from an initial partition $\tria_\bot$ by a repeated application of a refinement rule to individual $n$-simplices which splits them into 2 or more subsimplices.}
For $\tria, \tilde{\tria} \in \T$, we write $\tria \preceq \tilde{\tria}$ when $\tilde \tria$ is a refinement of $\tria$. We write $\tria \prec \tilde{\tria}$ when $\tria \preceq \tilde{\tria}$ and  $\tria \neq \tilde \tria$.
For a $n$-simplex $K$, let 
$$
\varrho_K:=\frac{\diam(K)}{\sup\{\diam(B)\colon B \text{ a ball in }K\}}
$$
denote its shape-parameter.
With $\mathfrak{T}$ denoting the set of all $n$-simplices in any partition $\tria \in \T$, we assume that these simplices 
(or briefly $\mathfrak{T}$) are (is) uniformly {\em shape regular}
 in the sense that
\be \label{25}
\varrho:=\sup_{K \in \mathfrak{T}} \varrho_K<\infty.
\ee

For each $K \in \mathfrak{T}$, we split its boundary into characteristic and in- and outflow boundaries, i.e.,  $\partial K=\partial K_0 \cup \partial K_{\!+} \cup \partial K_{\!-}$, and, for $\tria \in \T$, denote by
$$
\partial\tria :=\cup_{K \in \tria} \partial K\setminus \partial K_0
$$
  the {\em mesh skeleton}, i.e., the union of the non-characteristic  boundary portions of the elements.

Denoting by $\nabla_\tria$ the piecewise gradient operator, we consider the ``broken'' counterpart to $H(\bb;\Omega)$
$$
H({\bf b};\tria)=\{v \in L_2(\Omega)\colon{\bf b} \cdot \nabla_\tria v \in L_2(\Omega)\},
$$
 equipped with squared ``broken'' norm
 $$
 \|v\|^2_{H({\bf b};\tria)}:=\|v\|_{L_2(\Omega)}^2+\|{\bf b} \cdot \nabla_\tria v\|_{L_2(\Omega)}^2,$$
 and view the quantities living on the skeleton as elements of the space
$$
H_{0,\Gamma_{\!-}}({\bf b};\partial\tria):=\{w|_{\partial\tria}\colon w \in H_{0,\Gamma_{\!-}}({\bf b};\Omega)\},
$$
equipped with quotient norm
\be
\label{quotient}
\|\theta\|_{H_{0,\Gamma_{\!-}}({\bf b};\partial\tria)}:=\inf\{\|w\|_{H({\bf b};\Omega)}\colon\theta =w|_{\partial\tria},\,w \in H_{0,\Gamma_{\!-}}({\bf b};\Omega)\}.
\ee

For $\tria \in \T$, a {\em piecewise} integration-by-parts of the transport equation \eqref{transport}   leads to the following `mesh-dependent' (but otherwise {\em `continuous' infinite dimensional}) variational formulation:
\begin{equation}
\label{16}
\left\{
\begin{Array}{l}
\text{For } \framebox{$\U_\tria:=L_2(\Omega) \times H_{0,\Gamma_{\!-}}({\bf b};\partial\tria)$},\,\framebox{$\V_\tria:= H({\bf b};\tria)$}
, \text{given } f \in \V_\tria',\\
\text{find the solution } \framebox{$(u_\tria,\theta_\tria)$}=(u_\tria(f),\theta_\tria(f)) \in \U_\tria \text{ that, for all  } v \in \V_\tria, \text{ satisfies}\\
b_\tria(u_\tria,\theta_\tria;v) := \int_\Omega (cv-{\bf b}\cdot\nabla_\tria v -v \divv {\bf b})u_\tria \,d{\bf x}+\int_{\partial\tria} \llbracket v {\bf b} \rrbracket\theta_\tria \,d{\bf s} =f(v).
\end{Array}
\right.
\end{equation}
\new{Here $\int_{\partial\tria} \llbracket v {\bf b} \rrbracket\theta_\tria \,d{\bf s}$ should read as the unique extension to a bounded bilinear form on 
$H_{0,\Gamma_{\!-}}({\bf b};\partial\tria) \times \V_\tria$ (\cite[Lemma.~3.4]{35.8566}) of the integral over $\partial\tria$ of the product of $\llbracket v {\bf b} \rrbracket$ and  $\theta_\tria$, where for smooth $v$}
and $x \in \partial K \cap \partial K'$, 
$$
\llbracket v {\bf b} \rrbracket(x):=(v {\bf b}|_{K}\cdot{\bf n}_K)(x)+(v {\bf b}|_{K'}\cdot{\bf n}_{K'})(x),
$$
and $\llbracket v {\bf b} \rrbracket(x):=(v {\bf b}|_{K}\cdot{\bf n}_K)(x)$ for $x \in \partial\Omega \cap \partial K$.
Note the introduction of the notation $(u_\tria,\theta_\tria)$ for the exact solution of this variational problem.

In the following, we abbreviate $\|\cB^{-1}\|_{\cL(L_2(\Omega),H_{0,\Gamma_{\!-}}({\bf b};\Omega))}$, $\|(\cB^\ast)^{-1}\|_{\cL(L_2(\Omega),H_{0,\Gamma_{\!+}}({\bf b};\Omega))}$, $\|\divv {\bf b}\|_{L_\infty(\Omega)}$, $\|c\|_{L_\infty(\Omega)}$, and $\|c-\divv {\bf b}\|_{L_\infty(\Omega)}$ as $\|\cB^{-1}\|$, $\|\cB^{-\ast}\|$, $\|\divv {\bf b}\|$, $\|c\|$, and $\|c-\divv {\bf b}\|$ respectively. The following result roughly says that
\eqref{16} is uniformly inf-sup stable whenever the operators $\cB, \cB^*$ are isomorphisms on the respective function space pairs.

\begin{theorem}[{\cite[Theorem~3.1]{35.8566}}] \label{th4}
Assume that $\bbf\in L_\infty(\divv;\Omega)$, $c\in L_\infty(\Omega)$ and that 
conditions \eqref{14}, \eqref{15} hold. Then,
defining $\cB_\tria : \U_\tria \to \V_\tria'$ by $(\cB_\tria(u,\theta))(v):=b_\tria(u,\theta;v)$, 
one has $\cB_\tria \in \Lis(\U_\tria,\V_\tria')$ with
\begin{align*}
 \|\cB_\tria\|_{\cL(\U_\tria,\V_\tria')} &\leq 2+\|\divv {\bf b}\|+\|c-\divv{\bf b}\|,\\
\|\cB_\tria^{-1}\|_{\cL(\V_\tria',\U_\tria)} &\leq \sqrt{\|\cB^{-\ast}\|^2+\tilde{C}_\cB^2},
\end{align*}
where $\tilde{C}_\cB:=(1+\|\cB^{-\ast}\| (1+\|c-\divv {\bf b}\|)) 
\|\cB^{-1}\|(\|c-\divv {\bf b}\|+1)$.
\end{theorem}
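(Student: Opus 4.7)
My plan is to establish both the continuity bound and the stability bound on $\cB_\tria^{-1}$ by separately treating the volume component $u_\tria \in L_2(\Omega)$ and the skeleton component $\theta_\tria \in H_{0,\inflow}(\bb;\partial\tria)$ of $\U_\tria$, then square-summing. Injectivity plus the a priori bound will combine with a density argument (or direct surjectivity via the adjoint) to yield $\cB_\tria\in\Lis(\U_\tria,\V_\tria')$.

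For continuity, I would fix $(u_\tria,\theta_\tria)\in\U_\tria$ and $v\in\V_\tria$ and split $b_\tria(u_\tria,\theta_\tria;v)$ into its volume and skeleton parts. The volume integral $\int_\Omega((c-\divv\bb)v-\bb\cdot\nabla_\tria v)u_\tria\,d\bx$ is bounded by Cauchy--Schwarz by $(\|c-\divv\bb\|\|v\|_{L_2}+\|\bb\cdot\nabla_\tria v\|_{L_2})\|u_\tria\|_{L_2}\le (1+\|c-\divv\bb\|)\|v\|_{\V_\tria}\|u_\tria\|_{L_2}$. The skeleton integral is first interpreted via the density extension from \cite[Lem.~3.4]{35.8566}; pairing $\theta_\tria$ with any $H_{0,\inflow}(\bb;\Omega)$-representative and performing elementwise integration by parts, the resulting bound is essentially $(1+\|\divv\bb\|)\|v\|_{\V_\tria}\|\theta_\tria\|_{H_{0,\inflow}(\bb;\partial\tria)}$. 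Adding these and applying Cauchy--Schwarz to $\|(u_\tria,\theta_\tria)\|_{\U_\tria}$ recovers the announced constant $2+\|\divv\bb\|+\|c-\divv\bb\|$.

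For the lower bound, write $g:=\cB_\tria(u_\tria,\theta_\tria)\in\V_\tria'$. To control $\|u_\tria\|_{L_2}$, I would invoke \eqref{15} and set $v^\ast:=(\cB^\ast)^{-1}u_\tria\in H_{0,\outflow}(\bb;\Omega)\subset\V_\tria$, so $\|v^\ast\|_{\V_\tria}\le\|\cB^{-\ast}\|\|u_\tria\|_{L_2}$. Because $v^\ast$ is \emph{globally} $H(\bb;\Omega)$ with $v^\ast|_{\outflow}=0$, its interior skeleton jumps $\llbracket v^\ast\bb\rrbracket$ cancel by continuity of the normal trace, the $\outflow$-piece vanishes by $v^\ast|_{\outflow}=0$, and the $\inflow$-piece vanishes by $\theta_\tria|_{\inflow}=0$. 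Consequently $b_\tria(u_\tria,\theta_\tria;v^\ast)=\int_\Omega(\cB^\ast v^\ast)u_\tria\,d\bx=\|u_\tria\|_{L_2}^2$, whence $\|u_\tria\|_{L_2}\le\|\cB^{-\ast}\|\|g\|_{\V_\tria'}$ by testing against $g$. Next, to control $\|\theta_\tria\|_{H_{0,\inflow}(\bb;\partial\tria)}$, I would use \eqref{14} and the skeleton-extension lemma to construct $w\in H_{0,\inflow}(\bb;\Omega)$ with $w|_{\partial\tria}=\theta_\tria$ of the form $w=\cB^{-1}r$ for a suitable $r\in L_2(\Omega)$, yielding $\|w\|_{H(\bb;\Omega)}\le\|\cB^{-1}\|(1+\|c-\divv\bb\|)\|r\|_{L_2}=:B\|r\|_{L_2}$, where the shifted coefficient arises by rewriting $\bb\cdot\nabla w = r-cw$ in a manner that matches the coefficient of $\cB^\ast$. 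Testing the identity $b_\tria(u_\tria,\theta_\tria;w)=g(w)$ with $v=w$ (interior skeleton jumps again vanishing because $w\in H(\bb;\Omega)$) relates $\|r\|_{L_2}$ to $\|g\|_{\V_\tria'}+(1+\|c-\divv\bb\|)\|u_\tria\|_{L_2}$; inserting the previous bound on $\|u_\tria\|_{L_2}$ produces the second factor $A:=1+\|\cB^{-\ast}\|(1+\|c-\divv\bb\|)$, so altogether $\|\theta_\tria\|_{H_{0,\inflow}(\bb;\partial\tria)}\le\|w\|_{H(\bb;\Omega)}\le A\cdot B\cdot\|g\|_{\V_\tria'}=\tilde C_\cB\|g\|_{\V_\tria'}$.

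The main obstacle is the last construction: the skeleton integral with a globally regular test function only ``sees'' the $\outflow$-boundary trace of $\theta_\tria$ directly, so the challenge is to realize an extension $w$ of $\theta_\tria$ that is simultaneously controlled in $H(\bb;\Omega)$ by a single $L_2$-datum $\|r\|_{L_2}$ \emph{and} for which the $b_\tria$-test identity extracts $\|r\|$. The delicate algebraic swap between the coefficient $c$ of $\cB$ and the shifted $c-\divv\bb$ of $\cB^\ast$ must be tracked carefully, and the skeleton-extension lemma of \cite{35.8566} is essential to ensure that $r$ can actually be chosen so that $w=\cB^{-1}r$ has the prescribed skeleton trace.
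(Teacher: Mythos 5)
Note first that the paper does not reprove this result: it is imported verbatim as \cite[Theorem~3.1]{35.8566}, so the comparison below is against what a correct argument must look like rather than against a displayed proof.

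Your continuity estimate is fine: the volume part gives the factor $\sqrt{1+\|c-\divv\bb\|^2}\le 1+\|c-\divv\bb\|$, the skeleton part (after elementwise integration by parts and passing to the quotient norm) gives $1+\|\divv\bb\|$, and $\sqrt{(1+a)^2+(1+b)^2}\le 2+a+b$ recovers the announced constant. Likewise your control of $\|u_\tria\|_{L_2(\Omega)}$ is exactly right: the globally conforming test function $v^\ast=(\cB^\ast)^{-1}u_\tria\in H_{0,\Gamma_{\!+}}(\bb;\Omega)$ makes $\llbracket v^\ast\bb\rrbracket$ vanish on interior faces and on $\Gamma_{\!+}$, while $\theta_\tria=0$ on $\Gamma_{\!-}$, so $b_\tria(u_\tria,\theta_\tria;v^\ast)=\|u_\tria\|_{L_2(\Omega)}^2$ and $\|u_\tria\|_{L_2(\Omega)}\le\|\cB^{-\ast}\|\,\|g\|_{\V_\tria'}$.

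The $\theta_\tria$ step, however, has a genuine gap.  First, the prescription ``take $w=\cB^{-1}r$ so that $w|_{\partial\tria}=\theta_\tria$'' is circular: every $w\in H_{0,\Gamma_{\!-}}(\bb;\Omega)$ trivially equals $\cB^{-1}(\cB w)$, so without a rule for \emph{which} $r$ to pick from the affine family $\cB\big(\{w:w|_{\partial\tria}=\theta_\tria\}\big)$ you have not narrowed the choice, and the bound you need ($\|r\|_{L_2(\Omega)}\lesssim\|g\|_{\V_\tria'}$) certainly fails for a generic extension.  Second, and more importantly, testing the variational identity with $v=w$ does not ``extract'' $\|r\|_{L_2(\Omega)}$ at all.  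With $w\in H_{0,\Gamma_{\!-}}(\bb;\Omega)$ an extension of $\theta_\tria$, the interior jumps in $\int_{\partial\tria}\llbracket w\bb\rrbracket\theta_\tria\,d\bs$ do cancel and the $\Gamma_{\!-}$ contribution vanishes, but the $\Gamma_{\!+}$ contribution $\int_{\Gamma_{\!+}}w^2\,\bb\cdot\bn\,d\bs$ survives (and equivalently, rewritten via integration by parts, one obtains the energy pairing $\int_\Omega(\cB w)w\,d\bx$, not $\|\cB w\|_{L_2(\Omega)}^2$).  A sign-indefinite quadratic form of that type cannot control $\|w\|_{H(\bb;\Omega)}$, so from this test you only learn about the outflow trace of $w$.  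What is actually needed, and what the cited proof encodes via its Lemma~3.4, is a lower bound of the form $\|\theta\|_{H_{0,\Gamma_{\!-}}(\bb;\partial\tria)}\le\|\cB^{-1}\|(1+\|c-\divv\bb\|)\,\|T\theta\|_{\V_\tria'}$ for the trace functional $T\theta(v):=\int_{\partial\tria}\llbracket v\bb\rrbracket\theta\,d\bs$, combined with the observation that $T\theta_\tria=g-\int_\Omega(\cB^\ast_\tria\cdot)\,u_\tria\,d\bx$, whence $\|T\theta_\tria\|_{\V_\tria'}\le\big(1+\|\cB^{-\ast}\|(1+\|c-\divv\bb\|)\big)\|g\|_{\V_\tria'}$.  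Your argument arrives at the second inequality but has no proof of the first; $v=w$ is the wrong test function for it.  You also do not establish surjectivity of $\cB_\tria$, which the cited theorem asserts; a one-line gesture to density does not substitute for a range argument (e.g.\ closed range via the a priori bound together with injectivity of the Banach adjoint, or the explicit reconstruction of $(u_\tria,\theta_\tria)$ from $g$).
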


The additional independent variable $\theta_\tria$ introduced in the mesh-dependent variational formulation replaces the trace $u_\tria|_{\partial\tria}$ which generally  is not defined for $u_\tria \in L_2(\Omega)$. 
\emph{If} $f \in L_2(\Omega)$, however, or, equivalently, $u_\tria \in H_{0,\Gamma_{\!-}}({\bf b};\Omega)$, \emph{then} a reversed integration by parts shows that 
$$
u_\tria= {\framebox{$\uex$}=\uex(f)}:=\cB^{-1} f,\quad \theta_\tria=\uex|_{\partial\tria}.
$$
\subsection{Petrov-Galerkin}
For any $\tria \in \T$, let $\tria_s \in \T$ be a \emph{refinement} of $\tria$.
We set 
\begin{equation} \label{26}
\sigma:=\sup_{\tria \in \T} \max_{K' \in \tria} \Big(\max_{\{K \in \tria_s\colon K \subset K'\}} \frac{\diam(K)}{\diam(K')},\diam(K')\Big),
\end{equation}
which later will be assumed to be sufficiently small. \new{We also require that
\begin{equation} \label{26a}
\inf_{\tria \in \T} \min_{K' \in \tria} \min_{\{K \in \tria_s\colon K \subset K'\}} \frac{\diam(K)}{\diam(K')}\gtrsim \sigma,
\end{equation}}
This means that we will assume that any partition $\tria \in \T$ is sufficiently fine, and, what is more important, that $\tria_s \in \T$ is a refinement of $\tria$ such that \new{the}
  {\em subgrid refinement factor} (or sometimes called subgrid refinement depth)
 $1/\sigma$ when going from any $\tria$ to $\tria_s$ is sufficiently large.
\new{In addition to the conditions from Theorem~\ref{th4}, we assume henceforth
\begin{equation} 
\label{24}
{\bf b}|_{K} \in W^1_\infty(\divv;K) \text{ and } c|_K \in W_\infty^1(K) \,\,(K \in \tria_s), \text{  and  } |{\bf b}|^{-1}\in L_\infty(\Omega).
\end{equation}}

Under these assumptions, we have  the following result:
\begin{theorem}[{\cite[Thm.~4.8]{35.8566}}] \label{th1} 
Selecting, for some \new{fixed} degrees $m_w \geq 1$, and $m_u$,
\begin{align*}
\U_\tria^\delta&:=\prod_{K' \in \tria} \cP_{m_u}(K')\times  \Big(H_{0,\Gamma_{\!-}}({\bf b};\Omega) \cap  \prod_{K' \in \tria} \cP_{m_w}(K')\Big)\Big|_{\partial\tria_s}\subset \U_{\tria_s}, \\
\bar{\V}_{\tria_s}^\delta&:=\prod_{K \in \tria_s} \cP_{m_v}(K) \subset \V_{\tria_s},
\end{align*}
where $m_v \geq \max(m_u,m_w)+1$, for $\sigma>0$ small enough it holds that
$$
\inf_{\tria \in \T}\inf_{0 \neq (u,\theta) \in \U_\tria^\delta} \sup_{0 \neq v \in \bar{\V}_{\tria_s}^\delta} \frac{b_{\tria_s}(u,\theta;v)}{\|(u,\theta)\|_{\U_{\tria_s}}\|v\|_{\V_{\tria_s}}}>0,
$$
only dependent on (upper bounds for)
$m_u$, $m_w$, $\varrho$,  $\||{\bf b}|^{-1}\|_{L_\infty(\Omega)}$, 
$\|\cB^{-1}\|_{\cL(L_2(\Omega),H_{0,\Gamma_{\!-}}({\bf b};\Omega))}$,
\new{$\sup_{K \in \tria_s}\|{\bf b}|_K\|_{W^1_\infty(\divv;K)}$, and $\sup_{K \in \tria_s}\|c|_K\|_{W^1_\infty(K)}$.\footnote{In the theorem in \cite{35.8566} the last two expressions read as $\|{\bf b}\|_{W^1_\infty(\divv;\Omega)}$ and $\|c\|_{W^1_\infty(\Omega)}$, but an inspection of the proof shows that they can be replaced by the current ones.}}
\end{theorem}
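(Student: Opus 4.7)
The natural strategy is the Fortin argument: the continuous uniform inf-sup stability of $b_{\tria_s}$ is already in hand via Theorem~\ref{th4} (applied to the mesh $\tria_s$, with constants that depend only on the fixed data of the problem, not on $\tria_s$). Hence it suffices to construct a linear projector $\Pi^\delta\colon\V_{\tria_s}\to\bar{\V}_{\tria_s}^\delta$ with two properties: (i) a uniform bound $\|\Pi^\delta v\|_{\V_{\tria_s}}\lesssim\|v\|_{\V_{\tria_s}}$ for all $\tria\in\T$, and (ii) the Fortin compatibility
\[
b_{\tria_s}(u,\theta;\,v-\Pi^\delta v)=0\qquad\text{for all }(u,\theta)\in\U_\tria^\delta,\ v\in\V_{\tria_s}.
\]
Once such a $\Pi^\delta$ is in place, a standard argument yields the desired discrete inf-sup constant bounded from below by $\gamma/\|\Pi^\delta\|$, where $\gamma$ comes from Theorem~\ref{th4}.

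The Fortin compatibility unpacks into two families of moment conditions on the increment $w:=v-\Pi^\delta v$: a volume condition, $\int_{K'}(cw-\bb\cdot\nabla_{\tria_s} w-w\,\divv\bb)\,p\,d\bx=0$ for every $p\in\cP_{m_u}(K')$ and every coarse cell $K'\in\tria$; and a skeleton condition, $\int_F\llbracket w\bb\rrbracket\,q\,ds=0$ for every non-characteristic face $F\subset\partial\tria$ and every admissible trace $q$ coming from $\prod_{K'\in\tria}\cP_{m_w}(K')|_{\partial\tria}$. The plan is to build $\Pi^\delta$ element-by-element on the fine partition $\tria_s$. Because $\bar{\V}_{\tria_s}^\delta=\prod_{K\in\tria_s}\cP_{m_v}(K)$ is fully discontinuous, I have $\dim\cP_{m_v}$ degrees of freedom per fine element to allocate. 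The degree surplus $m_v\ge\max(m_u,m_w)+1$ is precisely what one needs: after multiplying a polynomial in $\cP_{m_v}(K)$ against frozen coefficients $\bb,c$ and differentiating, the resulting volume moment system against test polynomials in $\cP_{m_u}$ and the face moment system against traces of $\cP_{m_w}$ become unisolvent on a reference patch, by a standard counting plus linear-algebra argument.

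The main obstacle is that $\bb$ and $c$ are not piecewise polynomial, so the moment system on a physical element differs from the reference (frozen-coefficient) one. This is exactly where hypothesis \eqref{24} and the smallness of $\sigma$ enter. On each fine element $K\in\tria_s$ with $\diam(K)\lesssim\sigma$, I would replace $\bb|_K,c|_K$ by their constant (or piecewise polynomial of minimal degree) approximations $\bb_K,c_K$ and solve the Fortin local problem exactly for this frozen-coefficient surrogate; the residual perturbation is controlled by $\diam(K)\,(\|\bb|_K\|_{W^1_\infty(\divv;K)}+\|c|_K\|_{W^1_\infty(K)})\lesssim\sigma$. A Neumann-series perturbation around the frozen-coefficient construction, valid once $\sigma$ is below a threshold depending on the constants listed in the theorem statement, yields both the invertibility of the moment system on the true coefficients and a uniform bound on the inverse.

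The uniform $\V_{\tria_s}$-continuity of $\Pi^\delta$ then follows by a scaling argument on the reference patch: the shape-regularity bound $\varrho<\infty$ in \eqref{25} and the lower ratio bound \eqref{26a} guarantee that each coarse cell $K'$ contains a uniformly bounded number of fine cells of uniformly comparable size, so all local element-to-reference transformations have uniformly controlled geometric distortion. The hypothesis $|\bb|^{-1}\in L_\infty(\Omega)$ is needed to guarantee that every face $F\subset\partial\tria$ carries a nontrivial $\bb\cdot\bn$-weighted pairing, so that the skeleton moment system is well-posed on the reference configuration. Combining the local boundedness with the Fortin moment conditions gives both properties (i) and (ii), and hence the theorem.
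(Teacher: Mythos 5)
This theorem is not proved in the present paper at all: it is recalled from \cite[Thm.~4.8]{35.8566}, and the underlying strategy is the one mirrored in the proof of Proposition~\ref{lemma1new} (Sections~\ref{Slifting}--\ref{Sproof}). That strategy is \emph{not} a Fortin argument. One constructs, for each $(u,w)\in\breve\U_\tria^\delta$, an explicit near-optimal test function $\brbrR\in\bar\V^\delta_{\tria_s}$: pass to a modified bilinear form $\breve b_{\tria_s}$ with piecewise constant $\breve\bb,\breve d$ on the subgrid $\tria_s$; write the exact local Riesz lift of the modified residual in closed form along characteristics with respect to the adapted scalar product $\sll\cdot,\cdot\srr_{H(\breve\bb_K;K)}$ of Proposition~\ref{equivalence} (cf.\ \eqref{locallift}--\eqref{optimaltest}); truncate it to a polynomial $\brbrR_K\in\cP_{m_v}(K)$; and control all the resulting perturbations by $O(\sigma)$. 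So your proposal takes a genuinely different route.

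The Fortin route is a priori legitimate for inf-sup stability, but your sketch leaves unresolved the specific obstructions this problem poses. (a) The skeleton part of $b_{\tria_s}$ pairs the fine-face jumps $\llbracket v\bb\rrbracket$ against the trace of a single $w\in\cP_{m_w}(K')\cap H(\bb;\Omega)$ across all fine faces inside $K'$, so the ``face moment conditions'' are not decoupled per fine cell; and for general $v\in H(\bb;\tria_s)$ the traces live only at negative order, so ``preserving'' them by a local projector is not a standard moment-matching problem. (b) The degree condition $m_v\ge\max(m_u,m_w)+1$ is not the source of unisolvency here. The actual mechanism in \cite{35.8566} is a large subgrid depth $1/\sigma$, precisely to make the piecewise-constant $\breve\bb$ and $\breve d$ good surrogates of $\bb$ and $\divv\bb$; correspondingly the number of fine cells inside $K'$ grows like $\sigma^{-n}$. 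Your claim, invoking \eqref{26a}, that each $K'$ contains a ``uniformly bounded number of fine cells'' is the opposite of what the construction uses. (c) Uniform $H(\bb;\tria_s)$-boundedness of any Fortin projector would need local estimates that are robust in the orientation of $\bb$ relative to the element; this anisotropy is exactly what forces the characteristic coordinates and the adapted norm $\tvert\cdot\tvert$. You correctly intuit that a perturbation in $\sigma$ around a frozen-coefficient surrogate is needed, but both the surrogate and the mechanism by which $\sigma$ enters are different from what you describe, and the unisolvency/boundedness claims you make are not substantiated.
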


Consequently, as we have seen in Sect.~\ref{SPG}, the \emph{Petrov-Galerkin solution} $(u_\tria^\delta,\theta_\tria^\delta) \in \U_\tria^\delta \subset \U_{\tria_s}$ of
\be \label{PG1}
b_{\tria_s}(u_\tria^\delta,\theta_\tria^\delta;v)=f(v) \quad (v \in \ran \TtT^\delta|_{\U_\tria^\delta}),
\ee
where
\be \label{PG2}
\langle \TtT^\delta (u,\theta),v\rangle_{\V_{\tria_s}}=b_{\tria_s}(u,\theta;v)\quad (v \in \bar{\V}_{\tria_s}^\delta),
\ee
is a near-best approximation to 
$(u_{\tria_s},\theta_{\tria_s})=\cB_{\tria_s}^{-1} f \in \U_{\tria_s}$ from $\U_\tria^\delta$.

Since the above stability is ensured by "some" fixed subgrid-refinement depth, the computational work for computing the test-basis functions
remains uniformly proportional to the dimension of the trial space and in this sense scales optimally. While the actual depth is
hard to quantify precisely the experiments considered in \cite{35.8566} actually suggest that one or even no additional refinement suffice
in these examples.

We emphasize that although the bilinear form $b_{\tria_s}$ corresponds to the variational formulation of the transport problem obtained by applying a piecewise integration by parts w.r.t. the `{\em fine}' partition $\tria_s$, and the test search space $\bar{\V}_{\tria_s}^\delta$ consists of piecewise polynomials w.r.t. $\tria_s$ too,
the applied {\em trial} space consists of  pairs of functions that are piecewise polynomial w.r.t. the `{\em coarse}' partition $\tria$, or that are restrictions of such functions to $\partial\tria_s$, respectively.

\begin{remark} \label{rem0}
Actually, in \cite{35.8566} we established a slightly stronger inf-sup condition.
Defining
$$
\breve{\U}_\tria^\delta:=\prod_{K' \in \tria} \cP_{m_u}(K')\times  \Big(H_{0,\Gamma_{\!-}}({\bf b};\Omega) \cap  \prod_{K' \in \tria} \cP_{m_w}(K')\Big)
\subset L_2(\Omega) \times H_{0,\Gamma_{\!-}}(\bb;\Omega)=:\breve{\U},
$$
any $(u,\theta) \in \U_\tria^\delta  {\subset \U_{\tria_s}}$ is of the form $(u,w|_{\partial \tria_s})$ for some $(u,w) \in \breve{\U}_\tria^\delta$.
In \cite{35.8566} it was shown that
\be \label{27}
\inf_{\tria \in \T}\inf_{0 \neq (u,w) \in \breve{\U}_\tria^\delta} \sup_{0 \neq v \in \bar{\V}_{\tria_s}^\delta} \frac{b_{\tria_s}(u,w|_{\partial\tria_s};v)}{\|(u,w)\|_{\breve{\U}}\|v\|_{\V_{\tria_s}}}>0,
\ee
which implies Theorem~\ref{th1} because of
$\|(u,w)\|_{\breve{\U}}\geq \|(u,w|_{\partial\tria_s})\|_{\U_{\tria_s}}$.

Knowing \eqref{27}, the uniform boundedness of $\|\cB_{\tria_s}\|_{\cL(\U_{\tria_s},\V_{\tria_s}')}$ shows that $\|(u,w)\|_{\breve{\U}}\eqsim \|(u,w|_{\partial\tria_s})\|_{\U_{\tria_s}}$ on 
$\breve{\U}_\tria^\delta$. In particular this means that $(u,w|_{\partial \tria_s})$ determines $(u,w) \in \breve{\U}_\tria^\delta$ uniquely, so that equally well we can speak of the  {\emph{Petrov-Galerkin solution}
$(u_\tria^\delta,w_\tria^\delta) \in \breve{\U}_\tria^\delta$ of
\be \label{PGsol}
b_{\tria_s}(u_\tria^\delta,w_\tria^\delta|_{\tria_s};v)=f(v) \quad (v \in \ran \TtT^\delta|_{\breve{\U}_\tria^\delta}),
\ee
where, of course, $\TtT^\delta(u,w):=\TtT^\delta(u,w|_{\partial\tria_s})$.}
\end{remark}

\begin{remark}
\label{rem:nested}
The trial spaces $\breve{\U}_\tria^\delta$ are {\em nested} whenever the underlying   partitions are nested. This
plays an important role for conceiving adaptive strategies.
\end{remark}

\begin{remark}
\label{rem:int} Since a polynomial of degree $\geq 3$ is not uniquely determined by its values on the boundary of a triangle, 
the inf-sup stability \eqref{27} can apparently only hold for $m_w \geq 3$  when $\tria_s$ is a true refinement of $\tria$.

In the latter formulation involving the lifted version  $w$ of the skeleton quantity $\theta$, the scheme provides two approximations for the solution of the transport problem, 
namely $u_\tria^\delta \in L_2(\Omega)$
and a second one  $w_\tria^\delta\in H(\bb;\Omega)$.
\end{remark}
\begin{remark}
For a function in $ \prod_{K' \in \tria} \cP_{m_w}(K')$ to be in $H({\bf b};\Omega)$, it \new{has} to be continuous  at any intersection of an in- and outflow face of any $K' \in \tria$.
To realize this condition, an obvious approach is to consider in the definition of $\breve{\U}_\tria^\delta$ or $\U_\tria^\delta$ the space
$H_{0,\Gamma_{\!-}}({\bf b};\Omega) \cap C(\Omega) \cap \prod_{K' \in \tria} \cP_{m_w}(K')$ instead of $H_{0,\Gamma_{\!-}}({\bf b};\Omega) \cap  \prod_{K' \in \tria} \cP_{m_w}(K')$.
Obviously with this modification, Thm.~\ref{th1} and Remark~\ref{rem0} remain valid, and so does the whole further exposition.
\end{remark}

\section{A posteriori error estimation}\label{sec:apost}
\def\Tr{\mathcal{T}}
\def\Trs{{\mathcal{T}_s}}
\def\tTr{\tilde{\mathcal{T}}}
\def\tTrs{{\tilde{\mathcal{T}}_s}}
\new{The central goal in this section is to establish the validity of \eqref{res-inf-sup} for locally uniformly finite dimensional test search spaces
of the same form as used in Theorem \ref{th1}. We will be able to do so modulo a data oscillation term. The principal difficulty lies 
in an intrinsic sensitivity of essential problem metrics with respect to perturbations in the {\em convection field}. To exploit the fact
that we can identify optimal test spaces for locally constant convection comes at the price of an elaborate perturbation analysis to be
carried out in this section. In fact, it involves two levels of perturbation, namely passing from general data  $\bb,c,f$ to piecewise
polynomial ones, and then to piecewise constant $\bb$ on a subgrid. The passage to piecewise polynomial
data is accounted for by {\em data oscillation} terms. The piecewise polynomial structure of the data with respect to the
(coarser) trial grid $\tria$, in turn, is needed to control the effect of the reduction to piecewise constant convection on the discrete level.

\subsection{Main Results} 

\begin{theorem} \label{th2new} Assume \eqref{14}, and let $f \in L_2(\Omega)$.
For $\tria \in \T$, assume that for $K' \in \tria$, $\bb|_{K'} \in W^1_\infty(K')^n$, $c|_{K'} \in W^1_\infty(K')$, and 
let $\tilde \bb$, $\tilde c$, $\tilde f$ denote the best piecewise polynomial approximations to $\bb$, $c$, $f$ of degrees $m_\bb$, $m_c$, and $m_f$ w.r.t. $\tria$ in \new{$L_\infty(\Omega)^n$-, $L_\infty(\Omega)$-, or $L_2(\Omega)$}-norm, respectively. Let
\begin{align*}
{\rm o}&{\rm{sc}}_\tria(\bb,c,f):=\\
&\max\Big(\|f-\tilde f\|_{L_2(\Omega)},\big(\|c-\tilde c\|_{L_\infty(\Omega)},\max_{K' \in \tria} \,\diam(K')^{-1} \||\bb-\tilde \bb|\|_{L_\infty(K')}\big)\|f\|_{L_2(\Omega)}\Big),
\end{align*}
and
\begin{equation} 
\label{mv}
m_v \geq \max\big(m_w+\max(m_c,1,m_\bb-1),m_u+\max(m_c,1),m_f\big).
\end{equation}

Then, with $\breve{\U}_\tria^\delta$ and 
$\bar{\V}_{\tria_s}^\delta$ as defined before,  
for fixed sufficiently small $\sigma>0$ in \eqref{26}, and for any $(u,w) \in  \breve{\U}_\tria^\delta$ for which $\max(\|u\|_{L_2(\Omega)},\|w\|_{L_2(\Omega)}) \lesssim \|f\|_{L_2(\Omega)}$ {\rm(}which, on account of \eqref{27}, is valid for the Petrov-Galerkin solution{\rm)}, it holds that
\be \label{aposteriori}
\|R^\delta_{\tria_s}\|_{\V_{\tria_s}} \lesssim \|(u^{\rm ex},u^{\rm ex})-(u,w)\|_{\breve{\U}} \lesssim \|R^\delta_{\tria_s}\|_{\V_{\tria_s}} +{\rm osc}_\tria(\bb,c,f),
\ee
where $R^\delta_{\tria_s} \in \bar{\V}_{\tria_s}^\delta$ is defined by
\be \label{s4}
\langle R_{\tria_s}^\delta, v\rangle_{\V_{\tria_s}}=b_{\tria_s}(u,w|_{\partial\tria_s};v)-\int_\Omega f v \,d{\bf x} \quad (v \in \bar{\V}_{\tria_s}^\delta),
\ee
{\rm(}cf. \eqref{Rdelta}{\rm)}. The constants absorbed by the $\lesssim$-symbols in \eqref{aposteriori} depend only on the polynomial degrees and on (upper bounds for) $\varrho$, $\||\bb|^{-1}\|_{L_\infty(\Omega)}$, $\sup_{K' \in \tria} \|\bb\|_{W^1_\infty(K')^n}$, $\sup_{K' \in \tria} \|c\|_{W^1_\infty(K')}$, and $\|\cB^{-1}\|_{\cL(L_2(\Omega),H_{0,\Gamma_{\!-}}(\bb;\Omega))}$.
\end{theorem}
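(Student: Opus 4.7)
I treat the two inequalities in \eqref{aposteriori} separately.  The \emph{efficiency direction} $\|R^\delta_{\tria_s}\|_{\V_{\tria_s}} \lesssim \|(u^{\rm ex},u^{\rm ex})-(u,w)\|_{\breve{\U}}$ is the easy one.  Since $R^\delta_{\tria_s}$ is the $\V_{\tria_s}$-orthogonal projection onto $\bar{\V}_{\tria_s}^\delta$ of the exact lifted residual $R_{\tria_s}$, we have $\|R^\delta_{\tria_s}\|_{\V_{\tria_s}}\le \|R_{\tria_s}\|_{\V_{\tria_s}'}=\|f-\cB_{\tria_s}(u,w|_{\partial\tria_s})\|_{\V_{\tria_s}'}$.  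By Theorem~\ref{th4} this is at most $\|\cB_{\tria_s}\|_{\cL(\U_{\tria_s},\V_{\tria_s}')}\cdot\|(u,w|_{\partial\tria_s})-(u^{\rm ex},u^{\rm ex}|_{\partial\tria_s})\|_{\U_{\tria_s}}$, and since $u^{\rm ex}-w\in H_{0,\Gamma_{\!-}}(\bb;\Omega)$ is itself a valid lift of its restriction to $\partial\tria_s$, the quotient norm of the trace is bounded by $\|u^{\rm ex}-w\|_{H(\bb;\Omega)}$, whence the whole expression is bounded by $\|(u,w)-(u^{\rm ex},u^{\rm ex})\|_{\breve{\U}}$.

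For the \emph{reliability direction}, I first (i) recast the error in mesh-free form.  The isomorphism assumption \eqref{14} gives $\|w-u^{\rm ex}\|_{H(\bb;\Omega)}\le \|\cB^{-1}\|\,\|\cB w-f\|_{L_2(\Omega)}$ (since $\cB(w-u^{\rm ex})=\cB w-f$); combined with the triangle inequality for $\|u-u^{\rm ex}\|_{L_2}$ this yields
\[
\|(u,w)-(u^{\rm ex},u^{\rm ex})\|_{\breve{\U}}\ \lesssim\ \|u-w\|_{L_2(\Omega)}+\|\cB w-f\|_{L_2(\Omega)}\ =:\ E(u,w;f),
\]
a mesh-free least-squares-type quantity.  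Next (ii) piecewise integration by parts on $\tria_s$, exploiting that $w\in H_{0,\Gamma_{\!-}}(\bb;\Omega)$ cancels the skeleton contribution of $w$, produces the useful representation
\[
R_{\tria_s}(v) \;=\; \int_\Omega \bigl(cv-\bb\cdot\nabla_{\tria_s}v-v\divv\bb\bigr)\,(u-w)\,\dx \;+\; \int_\Omega v\,(\cB w-f)\,\dx \qquad(v\in\V_{\tria_s}),
\]
so that $R_{\tria_s}$ pairs naturally with the two components of $E$.

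Step (iii) handles the data perturbation: replacing $(\bb,c,f)$ by their best piecewise polynomial approximations $(\tilde\bb,\tilde c,\tilde f)$ on $\tria$ and denoting the resulting residual and quantity by $\tilde R_{\tria_s}$ and $\tilde E$, the inverse estimate $\|\nabla w\|_{L_2(K')}\lesssim \diam(K')^{-1}\|w\|_{L_2(K')}$ on each $K'\in\tria$ together with the assumed boundedness $\max(\|u\|_{L_2},\|w\|_{L_2})\lesssim\|f\|_{L_2}$ shows that both $|E-\tilde E|$ and $\sup_{v\in\bar{\V}_{\tria_s}^\delta}|R_{\tria_s}(v)-\tilde R_{\tria_s}(v)|/\|v\|_{\V_{\tria_s}}$ are dominated by $\mathrm{osc}_\tria(\bb,c,f)$; the three constituents of $\mathrm{osc}_\tria$ are tailored precisely to absorb the three data perturbations.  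It thus suffices to prove the unperturbed bound $\tilde E\lesssim \|\tilde R^\delta_{\tria_s}\|_{\V_{\tria_s}}$, which is step (iv): I exhibit $v^*\in\bar{\V}_{\tria_s}^\delta$ with $\tilde R_{\tria_s}(v^*)\gtrsim \tilde E^2$ and $\|v^*\|_{\V_{\tria_s}}\lesssim \tilde E$.  Because $\bar{\V}_{\tria_s}^\delta$ imposes no cross-element continuity, this construction is purely element-wise on $K\in\tria_s$.  On each $K\subset K'\in\tria$ I first freeze $\tilde\bb|_K$ to the constant $\bar\bb_K$ (its value at the centroid of $K$); by the $W^1_\infty$-smoothness of $\tilde\bb$ on $K'$ (inherited from that of $\bb$ via the polynomial character of $\tilde\bb$) and the smallness of $\sigma$ in \eqref{26}, the cost of this reduction is uniformly controlled.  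For the piecewise constant $\bar\bb$ the formal adjoint preserves polynomial degrees (up to a shift by $m_c$), so a decomposition $v_K=\alpha\,v_K^{(1)}+\beta\,v_K^{(2)}$ works: $v_K^{(1)}$ solves the local adjoint transport $\bar\cB^\ast v=(u-w)|_K$ with vanishing data on the local outflow (polynomial of degree $\le m_u+\max(m_c,1)$, inside $\cP_{m_v}(K)$ by \eqref{mv}), while $v_K^{(2)}=(\tilde\cB w-\tilde f)|_K$ is a polynomial of degree $\le\max(m_w+m_c,m_w+m_\bb-1,m_f)$, again in $\cP_{m_v}(K)$ by \eqref{mv}.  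Optimizing the resulting $2\times2$ problem in $(\alpha,\beta)$ yields the local bound; summing over $K\in\tria_s$ delivers $v^*$ and, combined with steps (iii) and (i), closes the reliability estimate.

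\textbf{Main obstacle.}  Step (iv) is by far the most technical: the two-level perturbation---first to piecewise polynomial data on $\tria$, then to piecewise constant $\bb$ on $\tria_s$---depends crucially on the $W^1_\infty$-smoothness of $\bb$ on coarse elements and on $\sigma$ being small, while the local test-function construction requires the precise polynomial degree condition \eqref{mv}.  The other steps are comparatively routine but still delicate in places, most notably the inverse-estimate bookkeeping in (iii) that verifies the matching of the three components of $\mathrm{osc}_\tria$ to the three perturbations of $(\bb,c,f)$.
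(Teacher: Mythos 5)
Your overall scaffolding—efficiency by projecting onto $\bar\V_{\tria_s}^\delta$ and continuity of $\cB_{\tria_s}$; reduction of the error to the mesh-free quantity $\|u-w\|_{L_2}+\|\cB w-f\|_{L_2}$ via \eqref{14}; data perturbation to $(\tilde\bb,\tilde c,\tilde f)$ controlled by ${\rm osc}_\tria$; and a final localized lower bound analogous to Proposition~\ref{lemma1new}—tracks the paper's proof closely through steps (i)--(iii). The gap is in step (iv), which is where the paper spends essentially all of Sections~\ref{Slifting}--\ref{Sproof}: your proposed test function $v_K^{(1)}$, the solution of the local adjoint transport problem $\bar\cB^\ast v=(u-w)|_K$ with zero outflow data, is \emph{not} a polynomial on $K$, contrary to your parenthetical claim. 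Even after freezing the field to a constant $\bar\bb_K$, the solution along each streamline is $v(\bs+t\bbb)=|\bar\bb_K|^{-1}\int_t^{r(\bs)}(u-w)(\bs+t'\bbb)\,e^{-\int\tilde c}\,dt'$; for $\tilde c\neq 0$ this is not polynomial in $t$, and even for $\tilde c=0$ the upper integration limit $r(\bs)$ changes expression across the subsimplices into which the streamlines partition $K$, so $v_K^{(1)}$ is only \emph{piecewise} polynomial on $K$ and in general does not lie in $\cP_{m_v}(K)\subset\bar\V_{\tria_s}^\delta$. This is precisely the difficulty the paper addresses by not using the exact Riesz lift $\breve R_K$ of \eqref{optimaltest} directly, but instead constructing the genuine polynomial $\brbrR_K$ of \eqref{near-opt}.

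A second, equally serious issue that your sketch elides is the cross-flow regularity of the test function. When you "freeze $\tilde\bb|_K$ to the constant $\bar\bb_K$" and claim the cost is "uniformly controlled" by smallness of $\sigma$, the perturbation term contains $\int_K(w-u)(\tilde\bb-\bar\bb_K)\cdot\nabla v$, whose estimation requires a bound on the \emph{full} gradient of $v$, not just $\partial_\bb v$. For the exact adjoint solution (and hence any naive projection of it onto $\cP_{m_v}(K)$), the cross-flow derivative $\bar\bb_K^\perp\cdot\nabla v$ can be arbitrarily large because the subsimplex partition of $K$ induced by the field direction can have arbitrarily thin slivers; the inverse inequality gives no uniform constant. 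The paper's $\brbrR_K$ is built specifically to avoid this: replacing $x_-(\by)$ by the single affine function $\bar x_-(\by)$ (with $|\bar x_-|_{W^1_\infty(\bar K)}\lesssim 1$ by \eqref{104}) is what makes Lemma~\ref{lem4}\eqref{a} —and hence the perturbation bound in the final Proposition of Section~\ref{Sproof}—possible. Without a replacement for $v_K^{(1)}$ that is (a) a genuine polynomial on $K$ and (b) bounded in $H^1(K)$ with a uniform constant, your step (iv) does not close, and the $2\times2$ optimization in $(\alpha,\beta)$ cannot even be set up inside $\bar\V_{\tria_s}^\delta$.

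Your $v_K^{(2)}=(\tilde\cB w-\tilde f)|_K$ is fine—it is indeed a polynomial of the stated degree—and the idea of pairing it with a local adjoint solution is in the right spirit; it is essentially what the explicit formula \eqref{near-opt} implements after discarding higher-order terms. But to make your route rigorous you would have to reinvent something equivalent to the $\bar K$-and-$\bar x_-$ construction and the associated Lemmas~\ref{lem3}, \ref{lem6}, \ref{lem4}, which is where the actual work in Section~\ref{sec:apost} lies.
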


Note that for given degrees $m_u$ and $m_w$, then for sufficiently large $m_\bb$, $m_c$, and $m_f$ (and thus $m_v$) and piecewise smooth $\bb$, $c$ and $f$, ${\rm osc}_\tria(\bb,c,f)$ can be reduced at a better rate in terms of $\# \tria$ than generally can be expected for $\|(u^{\rm ex},u^{\rm ex})-(u_\tria^\delta,w_\tria^\delta)\|_{\breve{\U}}$.

The proof of Theorem~\ref{th2new} will be based on the following Proposition.

\begin{proposition} \label{lemma1new} In the situation of Theorem~\ref{th2new}, let
$$
\tilde b_{\tria_s}(u,w;v) :=  \sum_{K\in \tria_s}\tilde b_K(u,w;v),
$$
where
$$
\tilde b_K(u,w;v):=\int_K ({\tilde c} u +\tilde \bb \cdot \nabla w) v+(w-u) (v \divv {\tilde \bb}+\tilde \bb \cdot \nabla v)dx.
$$
Then for any $(u,w,\tilde f) \in \DD_\tria:=\prod_{K'\in \tria}\cP_{m_u}(K')\times \cP_{m_w}(K')\times \cP_{m_f}(K')$, it holds that
\be \label{105}
\underbrace{\|w-u\|_{L_2(\Omega)}+\|{\tilde \bb} \cdot \nabla_\tria w +{\tilde c} w-\tilde f\|_{L_2(\Omega)}}_{\textstyle \EE(u,w,\tilde f):=} \lesssim \sup_{0 \neq v \in \bar{\V}_{\tria_s}^\delta}\frac{\tilde b_{\tria_s}(u,w;v)-\int_\Omega \tilde f v \dx}{\|v\|_{\V_{\tria_s}}},
\ee
only dependent on the polynomial degrees and on (upper bounds for) $\varrho$, $\||\bb|^{-1}\|_{L_\infty(\Omega)}$, 
$\sup_{K' \in \tria} \|\bb\|_{W^1_\infty(K')^n}$, and $\sup_{K' \in \tria} \|c\|_{W^1_\infty(K')}$.
\end{proposition}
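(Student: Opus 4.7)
The starting point is the identity
\[
\tilde b_K(u,w;v) - \int_K \tilde f\, v\,dx = \int_K v\,\mathcal R\,dx + \int_K (w-u)\bigl(\tilde\bb\cdot\nabla v + v\divv\tilde\bb-\tilde c\,v\bigr)\,dx,
\]
with $\mathcal R := \tilde\bb\cdot\nabla w + \tilde c w - \tilde f$, exhibiting the residual functional as simultaneously testing the volumetric residual $\mathcal R$ against $v$ and the mismatch $w-u$ against the adjoint action on $v$. The plan is to construct, from $(u,w,\tilde f)\in\DD_\tria$, an explicit test function in $\bar\V_{\tria_s}^\delta$ that realizes a definite share of $\|w-u\|_{L_2}^2+\|\mathcal R\|_{L_2}^2$ controlled by $\|v\|_{\V_{\tria_s}}^2$.

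First I would reduce to the constant-coefficient case subcellwise. Because $\bar\V_{\tria_s}^\delta=\prod_{K\in\tria_s}\cP_{m_v}(K)$ decouples and $\|\cdot\|_{\V_{\tria_s}}^2$ is additive over $\tria_s$, it suffices to build $v|_K$ on each $K\in\tria_s$ and $\ell_2$-assemble. On each $K$, I would freeze the coefficients: replace $\tilde\bb|_K,\tilde c|_K$ by constants $\bb_K,c_K$ (their values at a chosen point of $K$). By \eqref{26} the induced perturbation of $\tilde b_K$ is of order $\diam(K)\,\|\tilde\bb\|_{W^1_\infty(K')}\le\sigma^2\|\tilde\bb\|_{W^1_\infty(K')}$, hence absorbable once $\sigma$ is small. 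The frozen identity reads
\[
\tilde b^0_K(u,w;v)-\int_K\tilde f\, v\,dx=\int_K v\,\mathcal R^0_K\,dx+\int_K (w-u)\bigl(\bb_K\cdot\nabla v-c_K v\bigr)\,dx,
\]
with $\mathcal R^0_K:=\bb_K\cdot\nabla w+c_K w-\tilde f$.

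The heart of the argument is the subcell inf-sup for this frozen form. I would construct $v=\alpha v_1+\beta v_2$ on $K$, where $v_2\in\cP_{m_v}(K)$ solves the adjoint transport equation $\bb_K\cdot\nabla v_2-c_K v_2=w-u$ and $v_1\in\cP_{m_v}(K)$ is a companion polynomial targeting $\mathcal R^0_K$. Polynomial solvability for $v_2$ uses that $\bb_K$ is constant, that $\||\bb|^{-1}\|_{L_\infty}$ bounds characteristic lengths from above, and that the degree condition \eqref{mv} keeps the solution inside $\cP_{m_v}(K)$; integration along the characteristic gives $\|v_2\|_{L_2(K)}\lesssim\diam(K)\|w-u\|_{L_2(K)}$ and, via the defining equation, $\|v_2\|_{\V_{\tria_s}|_K}\lesssim\|w-u\|_{L_2(K)}$. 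This produces $\int_K(w-u)(\bb_K\cdot\nabla v_2-c_K v_2)=\|w-u\|_{L_2(K)}^2$ as the main contribution, with cross term $\int_K v_2\,\mathcal R^0_K$ bounded by $\diam(K)\|w-u\|_{L_2(K)}\|\mathcal R^0_K\|_{L_2(K)}$. The piece $v_1$ is chosen to produce the analogous main contribution $\|\mathcal R^0_K\|_{L_2(K)}^2$, with cross terms controlled using the degree condition. Tuning $\alpha,\beta$, using Young's inequality to absorb the cross terms, $\ell_2$-summing the local estimates over $K\in\tria_s$, and absorbing the freezing perturbation for small $\sigma$, then yields \eqref{105}.

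The main obstacle is the inverse-estimate asymmetry between $v_1$ and $v_2$: on a small subcell, a generic polynomial $v_1\in\cP_{m_v}(K)$ satisfies $\|v_1\|_{\V_{\tria_s}|_K}\sim\diam(K)^{-1}\|v_1\|_{L_2(K)}$ because the $\bb_K\cdot\nabla$ term dominates, so naive testing against $v_1=\mathcal R^0_K$ would recover only $\diam(K)\|\mathcal R^0_K\|_{L_2(K)}$ from the supremum. The balance must therefore come from matching $v_1$ to a dual transport-type construction (analogous to $v_2$ but designed to extract $\mathcal R^0_K$) so that its $\V_{\tria_s}$-norm avoids the $\diam(K)^{-1}$ blow-up, or from exploiting the specific polynomial structure of $\mathcal R^0_K$ implied by $(u,w,\tilde f)\in\DD_\tria$ and \eqref{mv}. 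Finalizing this balance, together with the bookkeeping of cross terms and the freezing perturbation, is where the precise $W^1_\infty$-bounds on $\bb,c$, the shape regularity $\varrho$, and the smallness of $\sigma$ enter decisively, and is the principal technical step of the proof.
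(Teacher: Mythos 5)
Your high-level plan matches the paper's strategy: freeze the coefficients to (sub)cell constants on each $K \in \tria_s$, build an explicit local polynomial test function whose $H(\bb;K)$-inner product against the local residual simultaneously extracts $\|w-u\|_{L_2(K)}^2$ and $\|\lambda\|_{L_2(K)}^2$ with $\lambda := \tilde\bb\cdot\nabla_\tria w + \tilde c w - \tilde f$, assemble over $\tria_s$, and absorb both the freezing perturbation and the cross terms for small $\sigma$ by inverse inequalities exploiting that $\mu := w-u$ and $\lambda$ are piecewise polynomial w.r.t.\ the \emph{coarse} grid $\tria$ (the paper's \eqref{213}). You also correctly diagnose that the dangerous $\diam(K)^{-1}$ inverse inequality is defused precisely because $\lambda$ lives on the coarse cell. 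So the route is essentially the paper's.

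There is, however, a genuine gap at the point you yourself single out as ``the principal technical step.'' Your $v_2$, proposed as the element of $\cP_{m_v}(K)$ solving $\bb_K\cdot\nabla v_2 - c_K v_2 = w-u$ along characteristics with zero inflow data, \emph{is not polynomial in general}. For $c_K\ne 0$ the characteristic-integral solution carries an exponential factor $e^{c_K(t-s)/|\bb_K|}$ and so leaves every $\cP_m(K)$; and even for $c_K=0$ the lower limit $x_-(\by)$ of the characteristic integral is only \emph{piecewise} linear on $K$ when $K$ has more than one inflow face, so the resulting function is piecewise polynomial on a (possibly badly shaped) subdivision of $K$, not an element of $\cP_{m_v}(K)$. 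You therefore cannot invoke the $L_2$ bound via ``integration along the characteristic'' and simultaneously claim membership in $\bar\V_{\tria_s}^\delta$. The paper circumvents exactly this: it drops the reaction/$\divv$ contributions at leading order, replaces $x_-$ by the affine inflow parametrization $\bar x_-$ defined via the enclosing polytope $\bar K$ (the whole point of Section~\ref{Sapproxliftres} and Figure~\ref{fig1}), and linearizes the streamline integral to $\mu(\bar x_-(\by),\by)(x-\bar x_-(\by))$, producing a genuinely polynomial $\brbrR_K$ in \eqref{near-opt} whose streamline derivative is $\approx\mu$ and whose value is $\approx\lambda$. All the error you discard in doing so is controlled by Lemmas~\ref{lem3}--\ref{lem6} and by \eqref{213}, which is what closes the argument. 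In short: you have the correct diagnosis of the difficulty and the correct remedy in spirit, but the explicit polynomial construction that makes it work is missing, and the candidate you do write down does not lie in the required space; until that is supplied, the proof does not close.

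One cosmetic difference worth noting: the paper uses a \emph{single} combined test function $\brbrR_K$ (and the intermediate exact lift $\breve R_K$ w.r.t.\ the tailored inner product of Proposition~\ref{equivalence}) rather than an ansatz $\alpha v_1 + \beta v_2$; this avoids some cross-term bookkeeping and makes the Young-inequality balancing in Lemma~\ref{lem6} cleaner, but is not a conceptual divergence from your plan.
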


\begin{remark}
\label{rem:quadrature}
In a strict sense the quamtities $R_{\tria_s}^\delta$, defined in \eqref{s4} can,  for general coefficients $\bb, c$, not be computed {\em exactly}.
Under the presumption that the accuarcy of quadrature can be adjusted, this issue is usually neglected, as we did in \eqref{aposteriori} above. 
 Since quadrature is in essence based on replacing the integrand by a local polynomial approximation, a natural way of incorporating
 this issue here is to work with the analogous projected lifted residuals with respect to the perturbed data 
 \be
 \label{s4q}
 \langle \tilde R_{\tria_s}^\delta, v\rangle_{\V_{\tria_s}}=\tilde b_{\tria_s}(u,w|_{\partial\tria_s};v)-\int_\Omega \tilde f v \,d{\bf x} \quad (v \in \bar{\V}_{\tria_s}^\delta),
\ee
which can be computed exactly. Under the assumptions of Theorem \ref{th2new} one then obtains the following estimates
\be \label{aposterioriq}
\|\tilde R^\delta_{\tria_s}\|_{\V_{\tria_s}}  \lesssim \|(u^{\rm ex},u^{\rm ex})-(u,w)\|_{\breve{\U}} +{\rm osc}_\tria(\bb,c,f) \lesssim \|\tilde R^\delta_{\tria_s}\|_{\V_{\tria_s}} +{\rm osc}_\tria(\bb,c,f).
\ee
\end{remark}

Sections~\ref{Slifting}--\ref{Sproof} will be devoted to the proof of Proposition~\ref{lemma1new}. In the course of these developments it will be seen that 
the residual $\EE(u,w,\tilde f)$ is actually equivalent to $\|\tilde R^\delta_{\tria_s}\|_{\V_{\tria_s}}$ and may therefore also be used as 
error indicator. \\ 

Assuming for the moment the validity of Proposition~\ref{lemma1new}, we can give the proof of Theorem~\ref{th2new} and Remark \ref{rem:quadrature}.

\begin{proof}[Proof of Theorem~\ref{th2new}] Applications of the triangle inequality show that
$$
\|(u^{\rm ex},u^{\rm ex})-(u,w)\|_{\breve{\U}} \eqsim
\|u-w\|_{L_2(\Omega)} +\|u^{\rm ex}-w\|_{H(\bb;\Omega)},
$$
and it holds that $\|u^{\rm ex}-w\|_{H(\bb;\Omega)} \leq \|\cB^{-1}\|_{\cL(L_2(\Omega),H_{0,\Gamma_{\!-}}(\bb;\Omega))} \|{\mathcal B} w-f\|_{L_2(\Omega)}$.

By using the inverse inequality on piecewise polynomials of degree $m_w$, 
and $\|w\|_{L_2(\Omega)} \lesssim \|f\|_{L_2(\Omega)}$, we infer that
$$
\big|\|\cB w-f\|_{L_2(\Omega)}-\|{\tilde \bb} \cdot \nabla_\tria w +{\tilde c} w -\tilde f\|_{L_2(\Omega)} \big| \lesssim {\rm osc}_\tria(\bb,c,f).
$$
An application of Proposition~\ref{lemma1new} gives
$$
\|w-u\|_{L_2(\Omega)}+\|{\tilde \bb} \cdot \nabla_\tria w +{\tilde c} w -\tilde f\|_{L_2(\Omega)} \lesssim \sup_{0 \neq v \in \bar{\V}_{\tria_s}^\delta}\frac{{\tilde b}_{\tria_s}(u,w;v)-\int_\Omega \tilde f v \dx}{\|v\|_{\V_{\tria_s}}}
$$
We show next that the right hand side deviates from the analogous unperturbed quantity only by ${\rm osc}_\tria(\bb,c,f)$. To that end,
it holds that $|\int_\Omega f v \,dx-\int_\Omega \tilde f v \,dx| \leq \|f-\tilde f\|_{L_2(\Omega)} \|v\|_{L_2(\Omega)}$, and
\begin{align*}
&\big|b_{\tria_s}(u,w|_{\partial \tria_s};v)-{\tilde b}_{\tria_s}(u,w;v)\big|= \\
&\big|\sum_{K \in \tria_s} 
\int_K ((c-{\tilde c}) u +(\bb-\tilde \bb) \cdot \nabla w) v+(w-u) (v \divv (\bb-{\tilde \bb})+(\bb-\tilde \bb) \cdot \nabla v)dx\\
&\lesssim {\rm osc}_\tria(u,w,f) \|v\|_{L_2(\Omega)},
\end{align*}
where we used that for $K' \in \tria$
\be \label{103}
\|\bb -\tilde \bb\|_{W^1_\infty(K')^n}\lesssim \diam(K')^{-1} \|\bb -\tilde \bb\|_{L_\infty(K')^n}
\ee
(cf. e.g. \cite{34.55} for the argument);
$\|\nabla v\|_{L_2(K)^n} \lesssim \diam(K)^{-1} \|v\|_{L_2(K)}$ for $K \in \tria_s$; and  
$\diam(K)^{-1} \lesssim \diam(K')^{-1}$
for $K \subset K'$ by \eqref{26a}.
We conclude that
$$
\Big|
\sup_{0 \neq v \in \bar{\V}_{\tria_s}^\delta}\frac{{\tilde b}_{\tria_s}(u,w;v)-\int_\Omega \tilde f v \dx}{\|v\|_{\V_{\tria_s}}}
-
\sup_{0 \neq v \in \bar{\V}_{\tria_s}^\delta}\frac{b_{\tria_s}(u,w;v)-\int_\Omega f v \dx}{\|v\|_{\V_{\tria_s}}}
\Big| \lesssim {\rm osc}_\tria(\bb,c,f).
$$
From 
$$
\|R^\delta_{\tria_s}\|_{\V_{\tria_s}}=
\sup_{0 \neq v \in \bar{\V}_{\tria_s}^\delta}\frac{b_{\tria_s}(u,w;v)-\int_\Omega f v \dx}{\|v\|_{\V_{\tria_s}}}
\lesssim 
\|(u^{\rm ex},u^{\rm ex})-(u,w)\|_{\breve{\U}}, 
$$
the assertion of Theorem \ref{th2new} follows. The above argument also shows that $\|\tilde R_{\tria_s}^\delta -  R_{\tria_s}^\delta\|_{\V_{\tria_s}}
\lesssim {\rm osc}_\tria(\bb,c,f)$ which confirms \eqref{aposterioriq} as well.
\end{proof}

\begin{remark} The estimate \eqref{103} together with $\|\bb -\tilde \bb\|_{L_\infty(K')^n}\lesssim \diam(K') |\bb|_{W^1_\infty(K')^n}$ implies that
$|\tilde \bb|_{W^1_\infty(K')^n} \lesssim |\bb|_{W^1_\infty(K')^n}$ which, as the analogous 
$|\tilde c|_{W^1_\infty(K')} \lesssim |c|_{W^1_\infty(K')}$, will be often used in the following.
\end{remark}}

 \subsection{Lifting modified residuals} \label{Slifting}
 
As in \cite{35.8566} the verification of uniform inf-sup stability \eqref{105} relies on judiciously perturbing exact  Riesz lifts  corresponding 
to certain perturbed bilinear forms.
To describe this, given $\tria \in \T$, let we define for $K \in \tria_s \cup \tria$, the {\em constants}
$$
\new{\breve \bb}_K:=|K|^{-1} \int_K \new{\tilde \bb}\,d {\bf x}, \quad
\new{\breve \dd}_K:=|K|^{-1} \int_K \divv \new{\tilde \bb}\,d {\bf x},
$$
and for $\tria \in \T$, let $\new{\breve \bb}\in L_\infty(\Omega)^n$ be given by
\begin{equation} \label{202}
\new{\breve \bb}|_K:=\new{\breve \bb}_K \quad (K \in \tria_s).
\end{equation}
On $\breve{\U}^\delta_\tria \times \bar{\V}^\delta_{\tria_s}$ we introduce yet another {\em modified bilinear form}
\be \label{barbhw0}
\breve b_{\tria_s}(u,w;v) :=  \sum_{K\in \tria_s}\breve b_K(u,w;v),
\ee
where the summands $\breve b_K(u,w;v)$ are defined by
\begin{equation} \label{200}
\breve b_K(u,w;v):=\int_K \big(\new{\tilde \bb} \cdot \nabla u+\new{\tilde c} u+\new{\breve d}_K (w-u)\big) v \,d\bx+\int_{\partial K} \new{\breve \bb}_K \cdot {\bf n}_K (w-u)v \,d\bs.
\end{equation}
\new{Note that $\breve \bb$ and $\breve d$ are piecewise constant w.r.t. $\tria_s$, whereas $\tilde \bb$ and $\tilde c$ are piecewise polynomial w.r.t. $\tria$.} This form is only introduced for analysis purposes since, as it turns out, it allows us to determine local lifted residuals exactly. Their use
requires then yet another layer of perturbation arguments.

\begin{remark}
\label{rem:particular}
The particular form of the modified bilinear form \eqref{barbhw0}-\eqref{200}, in particular the integrand in the boundary integral over $\partial K$,
is to ensure  that  $v \mapsto \breve{b}_K(u,w;v)$ is in $H(\new{\breve \bb}_K;K)'$.
\end{remark}

The proof of Proposition~\ref{lemma1new} is based on the following steps: \medskip

{\bf (I)} \label{steps}We will construct a $\brbrR=\brbrR_{\tria_s}(u,w;\tilde f) \in \bar{\V}_{\tria_s}^\delta$, 
such that $\breve b_{\tria_s}(u,w;\brbrR) -\int_\Omega {\tilde f} \brbrR  \,d{\bf x}\gtrsim 
\new{\EE(u,w,\tilde f)}
 \|\brbrR\|_{\V_{\tria_s}} $, of course, uniformly in $\tria \in \T$ and  $(u,w,\tilde f) \new{\in \DD_\tria}$. \medskip

{\bf (II)} Starting from the simple decomposition
\be
\label{split}
\new{\tilde b}_{\tria_s}(u,w;\brbrR)-\int_\Omega {\tilde f} \brbrR \,d{\bf x}= \breve b_{\tria_s}(u,w;\brbrR)-\int_\Omega {\tilde f} \brbrR  \,d{\bf x}+ \new{\tilde b}_{\tria_s}(u,w;\brbrR) - \breve b_{\tria_s}(u,w;\brbrR),
\ee
we will show for the second summand that
\be
\label{wish}
|\new{\tilde b}_{\tria_s}(u,w;\brbrR) - \breve b_{\tria_s}(u,w;\brbrR)|  \le  \delta \new{\EE(u,w,\tilde f)} \|\brbrR\|_{\V_{\tria_s}},
\ee
holds for a sufficiently small $\delta >0$, depending on the inf-sup constant for the first summand. \medskip

As the construction of the modified bilinear form $\breve b_{\tria_s}$ from $\new{\tilde b}_{\tria_s}$ builds on the approximation of $\new{\tilde \bb}$ by $\new{\breve \bb}$, the space $H(\new{\breve \bb};\tria_s)=\prod_{K \in \tria_s} H(\new{\breve \bb}_K;K)$, equipped with the corresponding product norm $\|\cdot\|_{H(\new{\breve \bb};\tria_s)}$, will play its role as a space `nearby' $\V_{\tria_s}=H(\bb;\tria_s)$.
In the next proposition, we equip $H(\new{\breve \bb}_K;K)$ with an equivalent Hilbertian norm that, as we will see, gives rise to a local Riesz lift $H(\new{\breve \bb}_K;K)' \rightarrow H(\new{\breve \bb}_K;K)$ of the residual of the modified bilinear form
that can be determined {\em explicitly}.

\begin{proposition}[{\cite[Remark~4.5]{35.8566}}] \label{equivalence}
For
$$
\diam(K) \leq |\new{\breve \bb}_K|,
$$
and with
$r({\bf s})$ denoting the distance from ${\bf s} \in \partial K_{\!-}$ to $\partial K_{\!+}$ along $\new{\breve \bb}_K$,
the scalar product
\be
\label{sscalar}
\langle\!\langle v, z\rangle\!\rangle_{H(\new{\breve \bb}_K;K)}:=\langle \partial_{\new{\breve \bb}_K} v,\partial_{\new{\breve \bb}_K} z\rangle_{L_2(K)}+\int_{\partial K_{\!-}} v({\bf s}) z({\bf s}) |({\textstyle \frac{\new{\breve \bb}_K}{|\new{\breve \bb}_K|}}\cdot {\bf n}_K) ({\bf s})| r({\bf s})d{\bf s}.
\ee
gives rise to a (uniform) {\em equivalent} norm $\tvert \cdot \tvert_{H(\new{\breve \bb}_K;K)}$ on $H(\new{\breve \bb}_K;K)$.
\end{proposition}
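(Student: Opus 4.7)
The task is to show the two-sided equivalence $\tvert v \tvert_{H(\breve\bb_K;K)}^2 \eqsim \|v\|_{H(\breve\bb_K;K)}^2$ with constants uniform over $K$, subject only to the standing hypothesis $\diam(K) \leq |\breve\bb_K|$. Since both expressions are bounded bilinear forms on $H(\breve\bb_K;K)$, I would argue first on the dense subspace of smooth functions and then extend by continuity, using that functions in $H(\breve\bb_K;K)$ admit well-defined traces on the non-characteristic boundary portion $\partial K_-$ (a standard trace theorem for graph-norm spaces associated with a constant convection field). Denote $\hat\bb:=\breve\bb_K/|\breve\bb_K|$.

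The core geometric tool is the characteristic parametrization: every $\bx \in K$ can be written uniquely as $\bx=\bs+t\,\hat\bb$ with $\bs\in\partial K_-$ and $t\in[0,r(\bs)]$, and the induced change of variables produces the Jacobian factor $|\hat\bb\cdot\bn_K(\bs)|$, so that
\[
\int_K \varphi(\bx)\,d\bx \;=\; \int_{\partial K_-}\!\!\int_0^{r(\bs)}\! \varphi(\bs+t\hat\bb)\,|\hat\bb\cdot\bn_K(\bs)|\,dt\,d\bs.
\]
This identity is the bridge between the bulk $L_2$-norm and the weighted boundary norm that appears in $\sll\cdot,\cdot\srr_{H(\breve\bb_K;K)}$.

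For the estimate $\|v\|_{H(\breve\bb_K;K)}^2 \lesssim \sll v,v\srr_{H(\breve\bb_K;K)}$, I would apply the fundamental theorem of calculus along a characteristic, writing $v(\bs+t\hat\bb)=v(\bs)+\int_0^t \partial_{\hat\bb} v(\bs+\tau\hat\bb)\,d\tau$, squaring via $(a+b)^2\leq 2a^2+2b^2$, and bounding the $t$-integral by $r(\bs)\leq\diam(K)$. After multiplying by $|\hat\bb\cdot\bn_K(\bs)|$ and integrating over $\partial K_-$, the first term yields exactly the boundary contribution to $\sll v,v\srr$, while the second term is controlled by $\diam(K)^2\,\|\partial_{\hat\bb}v\|_{L_2(K)}^2=(\diam(K)/|\breve\bb_K|)^2\,\|\partial_{\breve\bb_K}v\|_{L_2(K)}^2$, and here the hypothesis $\diam(K)\leq|\breve\bb_K|$ makes the prefactor at most $1$. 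The converse estimate $\sll v,v\srr_{H(\breve\bb_K;K)} \lesssim \|v\|_{H(\breve\bb_K;K)}^2$ is obtained by the same Taylor identity run in the opposite direction: from $v(\bs)^2 \leq 2 v(\bs+t\hat\bb)^2 + 2r(\bs)\int_0^{r(\bs)}(\partial_{\hat\bb}v)^2\,d\tau$, multiplication by $|\hat\bb\cdot\bn_K(\bs)|$ and integration in both $t\in[0,r(\bs)]$ and $\bs\in\partial K_-$ recovers the weighted boundary integral on the left, and on the right produces exactly $\|v\|_{L_2(K)}^2$ together with a $\diam(K)^2\,\|\partial_{\hat\bb}v\|_{L_2(K)}^2$ term, again absorbed by the standing hypothesis.

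The principal obstacle, and the reason the hypothesis $\diam(K)\leq|\breve\bb_K|$ is essential, is the scaling mismatch between the directional derivative $\partial_{\breve\bb_K}$ (which carries a factor $|\breve\bb_K|$) and the unit-speed derivative $\partial_{\hat\bb}$ that arises naturally when integrating along characteristics of length at most $\diam(K)$; without this bound the absorption constant would blow up when $|\breve\bb_K|$ is small relative to $\diam(K)$, and the equivalence would degenerate. A minor additional point to verify is that $\sll\cdot,\cdot\srr_{H(\breve\bb_K;K)}$ is indeed positive definite, which follows from the characteristic representation: $\partial_{\breve\bb_K}v\equiv 0$ means $v$ is constant along characteristics, and vanishing of the weighted boundary integral on $\partial K_-$ (where $|\hat\bb\cdot\bn_K|\,r>0$) then forces $v\equiv 0$. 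Together with the two estimates above this yields the claimed uniform norm equivalence.
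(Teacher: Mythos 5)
Your proof is correct, and the route you take — the characteristic parametrization $\bx = \bs + t\hat\bb$ with Jacobian $|\hat\bb\cdot\bn_K(\bs)|$, the fundamental-theorem-of-calculus comparison between $v(\bs)$ and $v(\bs+t\hat\bb)$ in both directions, and absorption of the $\diam(K)/|\breve\bb_K|$ factor via the standing hypothesis — is the standard argument and essentially the one behind the cited \cite[Remark~4.5]{35.8566}, which this paper invokes without reproducing a proof. You also correctly isolate the two points that require care (positive definiteness of the weighted inflow-trace term and the density/trace setup), so nothing is missing.
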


The corresponding global versions read $\langle\!\langle \cdot, \cdot \rangle\!\rangle_{H(\new{\breve \bb};\tria_s)}=\sum_{K \in \tria_s}\langle\!\langle \cdot|_K, \cdot|_K\rangle\!\rangle_{H(\new{\breve \bb}_K;K)}$, and so
$\tvert \cdot \tvert_{H(\new{\breve \bb};\tria_s)}:=\sqrt{\sum_{K \in \tria_s} \tvert \cdot|_K \tvert_{H(\new{\breve \bb}_K;K)}^2}$.

For the next observation it is convenient to use the shorthand notations
$$
\framebox{$\mu:=w-u$}, \quad
\framebox{$\lambda:= \partial_\new{\tilde \bb} w + \new{\tilde c} w  -{\tilde f}$}, \quad
\framebox{$\gamma:= \lambda-(\partial_\new{\tilde \bb} \mu+\new{\tilde c}\mu+\new{\breve d}_K\mu)$},
$$
so that, in particular,
$$
\gamma =\partial_\new{\tilde \bb} u + \new{\tilde c}u - {\tilde f} - \new{\breve d}_K(w-u).
$$
\new{Note also that $\EE(u,w,\tilde f)^2 \eqsim \sum_{K \in \tria_s} \|\mu\|^2_{L_2(K)}+ \|\lambda\|^2_{L_2(K)}$.}

For smooth $u$, $w$, and $\tilde f$ on $K$, the solution $\breve{R}_K=\breve{R}_K(u,w;f) \in H(\new{\breve \bb}_K;K)$
 of the variational problem
 \begin{equation} \label{locallift}
 \langle\!\langle  \breve{R}_K, v\rangle\!\rangle_{H(\new{\breve \bb}_K;K)}=\breve{b}_K(u,w;v) -\int_K {\tilde f} v\,dx \quad(v \in H(\new{\breve \bb}_K;K)),
 \end{equation}
is the (strong) solution of 
 \begin{equation} \label{18}
\left\{
\begin{array}{rcll}
-\partial_{\new{\breve \bb}_K}^2 \breve{R}_K & = & \gamma & \text{on } K,\\
\partial_{\new{\breve \bb}_K} \breve{R}_K - r |\new{\breve \bb}_K|^{-1} \breve{R}_K & = &  \mu & \text{on } \partial K_{\!-},\\
\partial_{\new{\breve \bb}_K} \breve{R}_K & = &  \mu & \text{on } \partial K_{\!+}.
\end{array}
\right.
\end{equation}

This $\breve{R}_K$ is the exact Riesz lift of the local modified residual $v \mapsto \breve{b}_K(u,w|_{\partial K},v)-\int_K {\tilde f} v\,d{\bf x} \in H(\new{\breve \bb}_K;K)'$, with $H(\new{\breve \bb}_K;K)$ being equipped with $\langle\!\langle \cdot, \cdot\rangle\!\rangle_{H(\new{\breve \bb}_K;K)}$.

To identify next $\breve{R}_K$ exactly, let $(x_1,\ldots,x_n)$ denote Cartesian coordinates on $K$ with the first basis vector being equal to $\new{\breve \bb}_K / |\new{\breve \bb}_K|$.
For ${\bf x}=(x,{\bf y}) \in K$, let $x_{\!\pm}({\bf y})$ be such that $(x_{\!\pm}({\bf y}),{\bf y})  \in \partial K_{\!\pm}$, see Figure~\ref{fig1}. 
\begin{figure}
\begin{center}
  \input{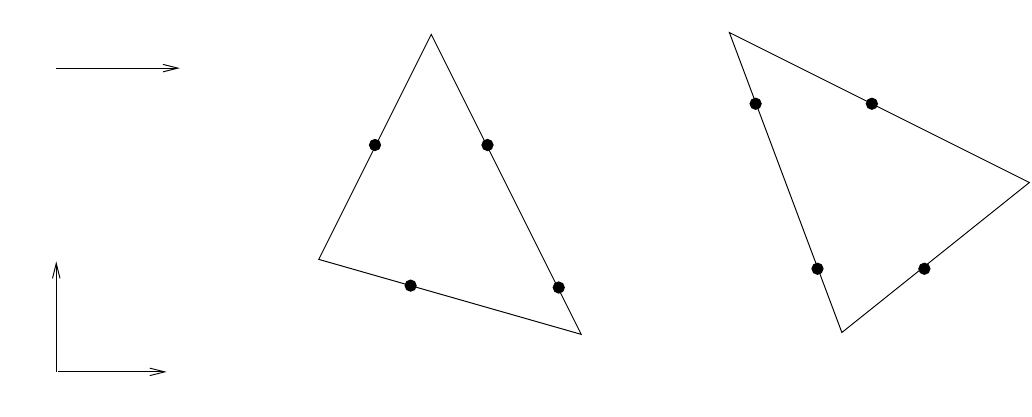_t}
\end{center}
\caption{$x_{\!\pm}$ on a triangle $K$ with two (left) or one (right) inflow boundaries. The enclosing triangle $\bar{K}$ and $\bar{x}_{-}$ will get their meaning in Sect.~\ref{Sapproxliftres}.}
\label{fig1}
\end{figure}

The solution $\breve{R}_K$ reads then as
\begin{equation} 
\label{optimaltest}
\begin{split}
\breve{R}_K(x,{\bf y})=&- |{\new{\breve \bb}_K}|^{-2} \int_{x_{\!-}({\bf y})}^x \int_{x_{\!-}({\bf y})}^z \gamma(q,{\bf y}) d q
dz
\\
 &+ \Big(|{\new{\breve \bb}_K}|^{-1}\mu(x_{\!+}({\bf y}),{\bf y})+|{\new{\breve \bb}_K}|^{-2} \int_{x_{\!-}({\bf y})}^{x_{\!+}({\bf y})} \gamma(q,{\bf y}) dq \Big)\Big(x-x_{\!-}({\bf y})\Big)\\
&+ \frac{ \int_{x_{\!-}({\bf y})}^{x_{\!+}({\bf y})} (\partial_{\new{\breve \bb}_K} \mu+\gamma) (q,{\bf y}) dq}{x_{\!+}({\bf y})-x_{\!-}({\bf y})},
\end{split}
\end{equation}
and is seen to be piecewise polynomial over $K$ when $\gamma,\mu$ are polynomial over $K$.

\subsection{Approximate lifted residuals} \label{Sapproxliftres}
Next we define an approximation $\brbrR_K$ to $\breve{R}_K$ by discarding higher order terms. Whereas, for polynomial $u$, $w$, ${\tilde f}$, $\new{\tilde \bb}$, and $\new{\tilde c}$ on $K$, $\breve{R}_K$ is only {\em piecewise} polynomial w.r.t. a partition of $K$ into subsimplices (indicated by the dotted lines in Figure~\ref{fig1}) that depends on the field $\new{\breve \bb}_K$, $\brbrR_K$ will always be {\em polynomial} on $K$.
 
The reason for introducing $\brbrR_K$ is that $\new{\breve \bb}^\perp \cdot \nabla \breve{R}_K$ can be arbitrarily large, which would not allow us to perform Step~{\bf (II)} on page~\pageref{steps} of our proof. 
This is caused by the fact that the subdivision of $K$ into  the aforementioned subsimplices can have arbitrarily small angles, and thus impedes a useful application of the inverse (or Bernstein) inequality to $\breve{R}_K$.

To define $\brbrR_K$, first we construct a {\em polyhedral} set  $\bar{K}$ that contains $K$ as follows. The number of inflow faces of $K$ is between $1$ and $n-1$ where $n$ is the spatial dimension. Let $F$ be the inflow face whose normal makes the smallest angle with  ${\new{\breve \bb}_K}$, and let $v$ denote the vertex of $K$ that does not belong to $F$. Finally let $H_F$ denote the $(n-1)$-hyperplane containing $F$. The ``shadow'' of $K$ on $H_F$, i.e.,
$$
\bar F := \big\{\bx\in H_F: \{\bx + t{\new{\breve \bb}_K}: t\in \R\} \cap K \neq \emptyset\big\},
$$
is an $(n-1)$-dimensional polyhedron containing $F$. Let $\bar K$ denote the convex hull  of $v$ and $\bar F$,  cf. Figure~\ref{fig1} for $n=2$.
Then, by construction, $\bar{K}$ has only one inflow face $\new{\partial}\bar{K}_{-} := \bar F$, and 
$K \subseteq \bar{K}$ with equality if and only if $K$ has only {\em one} inflow face, namely $\new{\partial}K_- =F$.

For ${\bf x}=(x,{\bf y}) \in \bar{K} \supset K$, let ${\bf x} \mapsto \bar{x}_{\!-}({\bf y}) \in \cP_1(K)$ be the linear function with $(\bar{x}_{\!-}({\bf y}),{\bf y})  \in \partial \bar{K}_{-}$,
i.e., $\bar{x}_{\!-}(\by)$ agrees with $x_{\!-}(\by)$ on $F$.
Then we have
\begin{align}
&\diam (\bar{K}) \lesssim \diam(K),\label{108}\\
&|\bar{x}_{\!-}|_{W^1_\infty(\bar{K})} \lesssim 1, \label{104}
\end{align}
where both constants  depend only on (an upper bound for) $\varrho_K$.

We define the {\em approximate lifted local residual}
$\brbrR_K=\brbrR_K(u,w;f) \in \cP_{m_v}(K)$ (cf. \eqref{mv})
by
\begin{equation} \label{near-opt}
\brbrR_K(x,{\bf y}):= |{\new{\breve \bb}_K}|^{-1} \mu(\bar{x}_{\!-}({\bf y}),{\bf y}) (x-\bar{x}_{\!-}({\bf y}))+\big(\lambda-(\new{\tilde c}+\new{\breve d}_K)\mu\big)
(\bar{x}_{\!-}({\bf y}),{\bf y}).
\end{equation}
Note that $\partial_{\new{\breve \bb}_K} \brbrR_K=\mu(\bar{x}_{\!-}({\bf y}),{\bf y})$.

The following lemmas show how $\brbrR_K$ relates on the one hand  to the exact Riesz lift $\breve{R}_K$
and on the other hand to the ``residuals'' $\mu=w-u$, $\lambda=\partial_\new{\tilde \bb} w+\new{\tilde c}w-\new{\tilde f}$ on $K$.
\begin{lemma} 
\label{lem3} For $\diam(K) \leq |{\new{\breve \bb}_K}|$, it holds that
$$
\|\breve{R}_K-\brbrR_K\|_{H(\new{\breve \bb}_K;K)} \lesssim  |\new{\breve \bb}_K|^{-1} \diam(K) \big(\| \mu\|_{H(\new{\breve \bb}_K;\bar{K})}+\|\lambda\|_{H(\new{\breve \bb}_K;\bar{K})}\big)+\diam(K)\|\mu\|_{H^1(K)},
$$
with a constant depending only on (upper bounds for) $\|c\|_{\new{W^1_\infty(K)}}$, \new{$|\bb|_{W^1_\infty(K)^n}$}, and $\varrho_K$.
\end{lemma}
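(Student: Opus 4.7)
The key driver is the very simple structure of $\brbrR_K$: from \eqref{near-opt} one reads off $\partial_{\breve \bb_K}^2 \brbrR_K = 0$ and $\partial_{\breve \bb_K}\brbrR_K(x,\by) = \mu(\bar x_-(\by),\by)$, so that $\brbrR_K$ may be viewed as a stripped-down version of $\breve R_K$ in which the piecewise-linear $x_-$ is replaced by the linear $\bar x_-$ and the quadratic contribution from $\gamma$ in \eqref{optimaltest} is dropped. The plan is therefore to bound $e := \breve R_K - \brbrR_K$ by a direct comparison of the explicit expressions, treating first the $\partial_{\breve \bb_K}$-derivative and then the $L_2(K)$-part.

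For the derivative, the ODE $-\partial_{\breve \bb_K}^2 \breve R_K = \gamma$ combined with the outflow condition $\partial_{\breve \bb_K}\breve R_K = \mu$ on $\partial K_{\!+}$ yields, after backward integration along characteristics, $\partial_{\breve \bb_K}\breve R_K(x,\by) = \mu(x_{\!+}(\by),\by) + |\breve \bb_K|^{-1}\int_x^{x_{\!+}(\by)}\gamma(q,\by)\,dq$. Subtracting $\partial_{\breve \bb_K}\brbrR_K$, writing $\mu(x_{\!+})-\mu(\bar x_{\!-})$ as a line integral of $\partial_{\breve \bb_K}\mu$ over $\bar K$, applying Cauchy--Schwarz along each characteristic and Fubini over $K$, and using \eqref{108} to bound characteristic lengths by $\diam(K)$, I arrive at
\[
\|\partial_{\breve \bb_K} e\|_{L_2(K)} \lesssim |\breve \bb_K|^{-1}\diam(K)\bigl(\|\partial_{\breve \bb_K}\mu\|_{L_2(\bar K)} + \|\gamma\|_{L_2(K)}\bigr).
\]

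For the $L_2(K)$-norm of $e$, I would integrate $\partial_{\breve \bb_K} e$ along characteristics starting at $\partial K_{\!-}$, which under the standing hypothesis $\diam(K) \leq |\breve \bb_K|$ costs at most one further factor $|\breve \bb_K|^{-1}\diam(K) \leq 1$, together with a trace bound on $\partial K_{\!-}$. The latter follows by combining the inflow boundary condition in \eqref{18} and $r(\bs)=x_{\!+}(\by)-x_{\!-}(\by)$ (which express $\breve R_K|_{\partial K_{\!-}}$ as characteristic averages of $\partial_{\breve \bb_K}\mu$ and $\gamma$) with the explicit value of $\brbrR_K$ at $(\bar x_{\!-}(\by),\by)$, invoking the identity $\lambda - (\tilde c + \breve d_K)\mu = \gamma + \partial_{\tilde \bb}\mu$ to reduce the trace error to quantities of the same type already controlled. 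Finally, from $\gamma = \lambda - \partial_{\tilde \bb}\mu - (\tilde c + \breve d_K)\mu$ and the splitting $\partial_{\tilde \bb}\mu = \partial_{\breve \bb_K}\mu + \partial_{(\tilde \bb - \breve \bb_K)}\mu$, the pointwise estimate $\|\tilde \bb - \breve \bb_K\|_{L_\infty(K)} \lesssim \diam(K)|\tilde \bb|_{W^1_\infty(K)^n}$ (which holds because $\breve \bb_K$ is the cell average of $\tilde \bb$) yields both the $\|\partial_{\breve \bb_K}\mu\|_{L_2(K)}$ contribution absorbed into the first summand of the claimed bound and the separate term $\diam(K)\|\mu\|_{H^1(K)}$.

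\emph{Expected main obstacle.} The subtle part is the trace estimate on $\partial K_{\!-}$: because the auxiliary polyhedron $\bar K$ strictly contains $K$ whenever $K$ has more than one inflow face, the characteristic averages appearing in the trace values of $\breve R_K$ and $\brbrR_K$ involve the polynomial extension of $\mu$ from $K$ to $\bar K$, and ensuring that the constants depend only on the shape regularity parameter $\varrho_K$ (and not on the particular geometry of $\bar K$) requires careful use of \eqref{108}--\eqref{104} together with a Markov-type inverse estimate on polynomials of the fixed degree $\max(m_u,m_w)$.
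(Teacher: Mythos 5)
Your plan is sound and in substance mirrors the paper's proof: both start from the explicit expressions \eqref{optimaltest} and \eqref{near-opt}, compare them term by term using Cauchy--Schwarz and Fubini along characteristics, invoke $\diam(K)\leq|\breve\bb_K|$ and \eqref{108}--\eqref{104}, use $\||\tilde\bb-\breve\bb_K|\|_{L_\infty(K)}\lesssim\diam(K)|\tilde\bb|_{W^1_\infty(K)^n}$, and split $\partial_{\breve\bb_K}\mu+\gamma=\lambda-(\tilde c+\breve d_K)\mu+(\breve\bb_K-\tilde\bb)\cdot\nabla\mu$. The only genuine difference is bookkeeping for the $L_2(K)$-part: the paper writes $\breve R_K-\brbrR_K$ directly as the sum of the three closed-form expressions \eqref{line1}--\eqref{line3} and bounds each, whereas you first bound $\partial_{\breve\bb_K}(\breve R_K-\brbrR_K)$ and then recover the $L_2$-norm by integrating along characteristics from an inflow-trace term. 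That trace term is precisely the paper's line \eqref{line3} minus $|\breve\bb_K|^{-1}\mu(\bar x_{\!-}(\by),\by)\,(x_{\!-}(\by)-\bar x_{\!-}(\by))$, so the same quantities are ultimately estimated; the paper's direct split simply sidesteps the need for any trace inequality.

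One correction to your ``expected main obstacle'': it is not one. The lemma's right-hand side is already formulated in the $\bar K$-norms $\|\mu\|_{H(\breve\bb_K;\bar K)}$ and $\|\lambda\|_{H(\breve\bb_K;\bar K)}$ precisely because sampling at $\bar x_{\!-}(\by)$ may leave $K$; you simply keep those $\bar K$-norms as they stand, and no Markov-type inverse inequality is needed inside this lemma. The return to norms over the coarse cell $K'$ via inverse estimates happens only afterwards, in \eqref{213} inside Corollary~\ref{corol1}, where one exploits that $\mu$ and $\lambda$ are fixed-degree polynomials on all of $K'$.
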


\begin{proof}
We write $\breve{R}_K-\brbrR_K$ as 
\begin{align} \label{line1}
&|{\new{\breve \bb}_K}|^{-2} \Big(
\big(x-x_{\!-}({\bf y})\big) \int_{x_{\!-}({\bf y})}^{x_{\!+}({\bf y})} \gamma(q,{\bf y}) dq -
\int_{x_{\!-}({\bf y})}^x \int_{x_{\!-}({\bf y})}^z \gamma(q,{\bf y}) d q
 \Big)+\\ \label{line2}
& |{\new{\breve \bb}_K}|^{-1} \big(\mu(x_{\!+}({\bf y}),{\bf y}) (x-x_{\!-}({\bf y}))- \mu(\bar{x}_{\!-}({\bf y}),{\bf y})(x-\bar{x}_{\!-}({\bf y}))\big)+\\ \label{line3}
& \frac{ \int_{x_{\!-}({\bf y})}^{x_{\!+}({\bf y})} (\partial_{\new{\breve \bb}_K} \mu+\gamma) (q,{\bf y}) dq}{x_{\!+}({\bf y})-x_{\!-}({\bf y})}-\big(\lambda-(\new{\tilde c}+\new{\breve d}_K)\mu\big)
(\bar{x}_{\!-}({\bf y}),{\bf y}).
 \end{align}
 
 Writing $\mu(x_{\!+}({\bf y}),{\bf y})$ as  $\mu(x,{\bf y})+|\new{\breve \bb}_K|^{-1} \int^{x_{\!+}({\bf y})}_{x}\partial_{\new{\breve \bb}_K} \mu(q,{\bf y})dq$, and similarly for $\mu(\bar{x}_{\!-}({\bf y}),{\bf y})$, and using that $\diam(K) \leq |{\new{\breve \bb}_K}|$, one infers that the $L_2(K)$-norm of \eqref{line2} is 
 \begin{align*}
 & \lesssim 
 |\new{\breve \bb}_K|^{-1} \diam(\bar{K}) \|\mu\|_{L_2(K)}+|\new{\breve \bb}_K|^{-2} \diam(\bar{K})^2\|\partial_{\new{\breve \bb}_K} \mu\|_{L_2(\bar{K})}\\
  & \lesssim |\new{\breve \bb}_K|^{-1} \diam(K)\|\mu\|_{H(\new{\breve \bb}_K;\bar{K})},
 \end{align*}
with a constant  only depending on $\varrho_K$.
 
The $L_2(K)$-norm of \eqref{line1} in turn  is 
\begin{align*}
 & \lesssim 
 |\new{\breve \bb}_K|^{-2} \diam(K)^2 \|\gamma\|_{L_2(K)}\\
 & \leq |\new{\breve \bb}_K|^{-2} \diam(K)^2 \big(\|\lambda\|_{L_2(K)}+(\|\new{\tilde c}\|_{L_\infty(K)} + |\new{\breve d}_K|)\|\mu\|_{L_2(K)}+\||\new{\tilde \bb}|\|_{L_\infty(K)}|\mu|_{H^1(K)}\big)\\
& \lesssim |\new{\breve \bb}_K|^{-1} \diam(K) \big(\|\lambda\|_{L_2(K)}+ \|\mu\|_{L_2(K)}\big)+\diam(K)|\mu|_{H^1(K)},
 \end{align*}
with a constant depending only   on (upper bounds for) $\new{\|\tilde c\|_{L_\infty(K)} \lesssim} \|c\|_{L_\infty(K)}$, and \new{$\|\divv \tilde \bb\|_{L_2(K)} \lesssim |\tilde \bb|_{W^1_\infty(K)^n} \lesssim  |\bb|_{W^1_\infty(K)^n} $}, where we have used that 
$\||\new{\tilde \bb}-\new{\breve \bb}_K|\|_{L_\infty(K)} \lesssim \diam(K)|\new{\tilde \bb}|_{W^1_\infty(K)^n}$ and $\diam(K) \leq |{\new{\breve \bb}_K}|$.

Using that $\partial_{\new{\breve \bb}_K} \mu+\gamma=\lambda-(\new{\tilde c}+\new{\breve d}_K)\mu+(\new{\breve \bb}_K-\new{\tilde \bb})\cdot \nabla \mu$,
 we find that the $L_2(K)$-norm of \eqref{line3} is bounded by a constant multiple of
\begin{align*}
& 
|\new{\breve \bb}_K|^{-1}\diam(\bar{K}) \|\partial_{\new{\breve \bb}_K}(\lambda-(\new{\tilde c}+\new{\breve d}_K)\mu)\|_{L_2(\bar{K})}+|\new{\tilde \bb}|_{W^1_\infty(K)^n} \diam(K) |\mu|_{H^1(K)}\\
& \lesssim 
|\new{\breve \bb}_K|^{-1}\diam(K)\big(\|\partial_{\new{\breve \bb}_K}\lambda\|_{L_2(\bar{K})}+\|\mu\|_{\new{H({\breve \bb}_K;\bar{K})}}\big)+\diam(K)\|\mu\|_{H^1(K)},
\end{align*}
only dependent on (upper bounds for) $\new{\|{\tilde c}\|_{W^1_\infty(K)} \lesssim} \|c\|_{W^1_\infty(K)}$, $|\bb|_{W^1_\infty(K)^n}$, and $\varrho_K$.

Next, we write
$$
\partial_{\new{\breve \bb}_K} \big(\breve{R}_K(x,{\bf y})-\brbrR_K(x,{\bf y})\big)=
 \mu(x_{\!+}({\bf y}),{\bf y})+|{\new{\breve \bb}_K}|^{-1} \int_{x}^{x_{\!+}({\bf y})} \gamma(q,{\bf y}) dq -\mu(\bar{x}_{\!-}({\bf y}),{\bf y}).
$$
 Its $L_2(K)$-norm is 
 \begin{align*}
& \lesssim |\new{\breve \bb}_K|^{-1} \diam(K) \big(\|\partial_{\new{\breve \bb}_K} \mu\|_{L_2(\bar{K})}+\|\gamma\|_{L_2(K)}\big)
\\ & \lesssim |\new{\breve \bb}_K|^{-1} \diam(K) \big(\| \mu\|_{H(\new{\breve \bb}_K;\bar{K})}+\|\lambda\|_{L_2(K)}\big),
\end{align*}
 only dependent on (upper bounds for) $\|c\|_{L_\infty(K)}$, \new{$\|\bb \|_{W_\infty^1(K)^n}$}, and $\varrho_K$.
 By collecting the derived upper bounds, the proof is completed.
 \end{proof}

 \new{We end this subsection with another technical lemma which will play a key role to prove Step~{\bf (I)} on page~\pageref{steps}.
In fact, using that $\lambda$ and $\mu$ are piecewise polynomial on $\tria$,  inverse inequalities will allow us to show that
 the terms involving first order derivatives can be kept small relative to the other ones by choosing the subgrid depth sufficiently large.
 Then, the next lemma in conjunction with the previous Lemma \ref{lem3} already hints at the fact that $\EE(u,w,\tilde f)$ provides a lower bound for
 $\|\breve{R}\|_{H(\breve{\bb};\tria_s)}$. It then remains to switch to the correct norm to establish Step (I), see Corollary \ref{corol1} below.}

 \begin{lemma} 
\label{lem6} 
 {For $\diam(K) \leq |\new{\breve \bb}_K|$, it holds that
\be \label{201a}
 \|\brbrR_K\|^2_{H({\new{\breve \bb}_K};K)} + \diam(K)^2 (|\mu|_{H^1(\bar{K})}^2+|\lambda|_{H^1(\bar{K})}^2) \gtrsim \|\lambda\|_{L_2(K)}^2+\|\mu\|_{L_2(K)}^2,
 \ee
where the constant depends only   on (upper bounds for) $\|c\|_{L_\infty(\Omega)}$, $|\new{\tilde \bb}|_{W^1_\infty(\Omega)^n}$, and  $\varrho_K$.}
 \end{lemma}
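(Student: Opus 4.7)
The plan is to exploit the very explicit affine-in-$x$ structure of $\brbrR_K$ to read off pointwise boundary values of $\mu$ and $\lambda$ on the inflow face, and then to extend from the boundary into $K$ by the fundamental theorem of calculus, paying only an $H^1$ correction scaled by $\diam(K)$. Concretely, write
\[
\brbrR_K(x,{\bf y}) = A({\bf y})(x-\bar{x}_{\!-}({\bf y})) + B({\bf y}), \qquad A({\bf y}):=|\breve\bb_K|^{-1}\mu(\bar{x}_{\!-}({\bf y}),{\bf y}),\quad B({\bf y}):=\bigl(\lambda-(\tilde c+\breve d_K)\mu\bigr)(\bar{x}_{\!-}({\bf y}),{\bf y}).
\]
The essential observation is that $\partial_{\breve\bb_K}\brbrR_K(x,{\bf y}) = \mu(\bar{x}_{\!-}({\bf y}),{\bf y})$, so that the $H(\breve\bb_K;K)$-seminorm of $\brbrR_K$ already records the boundary trace of $\mu$ exactly.

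\textbf{Bounding $\|\mu\|_{L_2(K)}$.} I would use FTC along the $\breve\bb_K$-direction,
\[
\mu(x,{\bf y}) = \mu(\bar{x}_{\!-}({\bf y}),{\bf y}) + \int_{\bar{x}_{\!-}({\bf y})}^{x} \partial_x \mu(z,{\bf y})\,dz,
\]
square, apply Cauchy--Schwarz, and integrate over $K$. Since the segment joining $(\bar{x}_{\!-}({\bf y}),{\bf y})$ to $(x,{\bf y})$ lies inside $\bar{K}$, the inner integral is controlled by $\|\partial_x\mu(\cdot,{\bf y})\|_{L_2(\bar K_{\bf y})}^2$, and a slice-wise Fubini together with $|\partial_x\mu|\le|\nabla\mu|$ and $\diam(\bar K)\lesssim\diam(K)$ (from \eqref{108}) gives
\[
\|\mu\|_{L_2(K)}^2 \;\lesssim\; \|\partial_{\breve\bb_K}\brbrR_K\|_{L_2(K)}^2 + \diam(K)^2\,|\mu|_{H^1(\bar K)}^2.
\]

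\textbf{Bounding $\|\lambda\|_{L_2(K)}$.} Apply the same FTC-extension to $\lambda$ to reduce to controlling $\int_K \lambda(\bar{x}_{\!-}({\bf y}),{\bf y})^2\,d{\bf x}$. By the definition of $B$, $\lambda(\bar{x}_{\!-}({\bf y}),{\bf y}) = B({\bf y}) + (\tilde c+\breve d_K)(\bar{x}_{\!-}({\bf y}),{\bf y})\cdot|\breve\bb_K|A({\bf y})$, where the prefactor is uniformly bounded in terms of $\|c\|_{L_\infty(K)}$ and $|\bb|_{W^1_\infty(K)^n}$, so the $A$-contribution is absorbed into $\|\partial_{\breve\bb_K}\brbrR_K\|_{L_2(K)}^2$. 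To handle $\int_K B({\bf y})^2\,d{\bf x}$, I would extract $B$ by $x$-averaging $\brbrR_K$ on each slice: $B({\bf y}) = |K_{\bf y}|^{-1}\int_{K_{\bf y}}\brbrR_K\,dx - A({\bf y})(\bar x_{K_{\bf y}}-\bar{x}_{\!-}({\bf y}))$. A Cauchy--Schwarz on the averaging plus the bound $|\bar x_{K_{\bf y}}-\bar x_-|\lesssim\diam(K)$ yields
\[
\int_K B({\bf y})^2\,d{\bf x} \;\lesssim\; \|\brbrR_K\|_{L_2(K)}^2 + \diam(K)^2|\breve\bb_K|^{-2}\|\partial_{\breve\bb_K}\brbrR_K\|_{L_2(K)}^2,
\]
and here the hypothesis $\diam(K)\le|\breve\bb_K|$ turns the last factor into an $O(1)$ constant. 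Combining all three steps produces precisely \eqref{201a}.

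The one subtlety I expect to take care with is that $\bar{x}_{\!-}({\bf y})$ lies on $\partial\bar K_{-}$, not on $\partial K$, so the FTC integrations happen over slices of $\bar K$ rather than $K$; this is why the right-hand side seminorms are taken over $\bar K$ and why one needs the estimates $\diam(\bar K)\lesssim\diam(K)$ and $|\bar x_{\!-}|_{W^1_\infty(\bar K)}\lesssim 1$ from \eqref{108}--\eqref{104}. All constants entering the chain depend only on $\varrho_K$, $\|c\|_{L_\infty(K)}$, and $|\tilde\bb|_{W^1_\infty(K)^n}\lesssim|\bb|_{W^1_\infty(K)^n}$, matching the dependencies declared in the statement.
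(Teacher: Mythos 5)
Your argument is correct. The key structural facts you rely on are exactly the ones the paper uses: $\partial_{\breve\bb_K}\brbrR_K$ equals the boundary trace $\mu(\bar x_-({\bf y}),{\bf y})$, and an FTC (or Poincar\'e-in-streamline) extension from $\partial\bar K_-$ into $K$ produces only a $\diam(K)\cdot H^1(\bar K)$ correction, at which point $\diam(K)\le|\breve\bb_K|$ keeps the extra $|\breve\bb_K|^{-1}\diam(K)$ factors harmless. The organization is slightly different, though. The paper proves the two approximation estimates $\|\brbrR_K-\lambda\|_{L_2(K)}\lesssim\|\mu\|_{L_2(K)}+\diam(K)(|\mu|_{H^1(\bar K)}+|\lambda|_{H^1(\bar K)})$ and $\|\partial_{\breve\bb_K}\brbrR_K-\mu\|_{L_2(K)}\lesssim\diam(K)|\mu|_{H^1(\bar K)}$, and then recovers the lower bound for $\|\lambda\|^2+\|\mu\|^2$ by two applications of Young's inequality, with a parameter $\eta$ tuned close to $1$. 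You instead invert the explicit affine formula $\brbrR_K=A({\bf y})(x-\bar x_-({\bf y}))+B({\bf y})$ slice by slice: read off $\mu(\bar x_-,{\bf y})$ directly from $\partial_{\breve\bb_K}\brbrR_K$, extract $B({\bf y})$ by slice-wise $x$-averaging of $\brbrR_K$ plus a Cauchy--Schwarz, and reassemble $\lambda(\bar x_-,{\bf y})=B({\bf y})+(\tilde c+\breve d_K)(\bar x_-,{\bf y})\,\mu(\bar x_-,{\bf y})$ before extending by FTC. This avoids the Young tuning step, at the price of the extra averaging/Fubini bookkeeping; the constant dependencies come out the same (you need $\|\tilde c+\breve d_K\|_{L_\infty(K)}\lesssim\|c\|_{L_\infty}+|\tilde\bb|_{W^1_\infty}$, exactly as in the paper). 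Both are sound; yours is a somewhat more constructive packaging of the same underlying estimate.
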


 \begin{proof} By $\diam(\bar{K}) \lesssim \diam(K) \leq |\new{\breve \bb}_K|$, similarly as in the proof of Lemma~\ref{lem3}, one infers that
 $\|\brbrR_K-\lambda\|_{L_2(K)} \lesssim \|\mu\|_{L_2(K)}+\diam(K)(|\mu|_{H^1(\bar{K})} +|\lambda|_{H^1(\bar{K})})$
 and $\|\partial_{\new{\breve \bb}_K} \brbrR_K-\mu\|_{L_2(K)} \lesssim \diam(K)|\mu|_{H^1(\bar{K})}$, with constants depending only  on (upper bounds for) $\|c\|_{L_\infty(\Omega)}$, $|\new{\tilde \bb}|_{W^1_\infty(\Omega)^n}$,  and  $\varrho_K$.
 
 By two applications of Young's inequality, we infer that
 \begin{align*}
  \|\brbrR_K\|^2_{L_2(K)} + \|\partial_{\new{\breve \bb}_K} \brbrR_K\|^2_{L_2(K)}  \geq & (1-\eta) \|\lambda\|^2_{L_2(K)}-(\eta^{-1}-1)\|\brbrR_K-\lambda\|^2_{L_2(K)}\\
  & +
 {\textstyle \frac{1}{2}} \|\mu\|^2_{L_2(K)}-(2-1)\|\partial_{\new{\breve \bb}_K} \brbrR_K-\mu\|^2_{L_2(K)} .
 \end{align*}
 By selecting the constant $\eta \in (0,1)$ sufficiently close to $1$, the proof is completed.
 \end{proof}

\subsection{Proof of Proposition~\ref{lemma1new}} \label{Sproof}
So far we have not used that $u$, $w$, $\new{\tilde f}$, $\new{\tilde \bb}$, and $\new{\tilde c}$ are piecewise polynomial w.r.t. $\tria$, whilst $\new{\breve b}_{\tria_s}(\,,\,)$ is a `broken' bilinear form 
w.r.t. a \new{sufficiently} refined partition $\tria_s$, \new{and furthermore that $|\bb|^{-1} \in L_\infty(\Omega)$}. These facts are going to be used in the following.

Setting
$$
D:=\sup_{K \in \mathfrak{T}} \sup_{0 \neq {\bf b} \in W^1_\infty(K)^{\new{n}}} \frac{\| |\new{\breve \bb}_K-{\bf b}|\|_{L_\infty(K)}}{\diam(K)  |{\bf b}|_{W_\infty^1(K)^n}}\quad(<\infty),
$$
 let
\begin{equation} \label{111}
\bar{\sigma}>0
\end{equation}
be such that for $\sigma \in (0,\bar{\sigma}]$ and $\tria \in \T$, $\tria_s$ is sufficiently fine to ensure that
\begin{equation} \label{100}
 \diam(K)\, \||{\bf b}|^{-1}\|_{L_\infty(K)} \max\big(1,D |{\bf b}|_{W_\infty^1(K)^n}\big) \leq {\textstyle \frac{1}{2}} \quad (K \in \tria_s).
\end{equation}
 Then for any $K \in \tria_s$, we have
\begin{equation} \label{102}
\begin{split}
|\new{\breve \bb}_K| &\geq \||{\bf b}|^{-1}\|_{L_\infty(K)}^{-1}-\||\new{\breve \bb}_{K}-{\bf b}|\|_{L_\infty(K)}  \\
&\geq \||{\bf b}|^{-1}\|_{L_\infty(K)}^{-1} -D \diam(K) |{\bf b}|_{W_\infty^1(K)^n}\\
&\geq {\textstyle \frac{1}{2}}  \||{\bf b}|^{-1}\|_{L_\infty(K)}^{-1} \geq \max\big({\textstyle \frac{1}{2}}  \||{\bf b}|^{-1}\|_{L_\infty(\Omega)}^{-1},\diam(K)
\big),
\end{split}
\end{equation}
where we have used \eqref{100}.

For $K \in\mathfrak{T}$, and $k \geq \ell \in \N_0$, we will make repeated use of the {\em inverse inequality}
$$
|\cdot|_{H^k(K)} \lesssim \diam(K)^{-(k-\ell)}\|\cdot\|_{H^\ell(K)}\quad \text{on } \cP_m(K),
$$
where the constant depends only  on $m$, $\varrho_K$, and $k$.\\

\begin{corollary} \label{corol1}
\new{We define $\breve{R}$, $\brbrR$ by $\breve{R}|_K:=\breve{R}_K$ and $\brbrR|_K:=\brbrR_K$ for $K \in \tria_s$. Then, one
has for }
  ${(u,w,f) \in \DD_\tria}$, $\sigma \in (0,\bar{\sigma}]$   that
\renewcommand{\theenumi}{\roman{enumi}}
\begin{enumerate}
\item \label{i} $\|\breve{R}-\brbrR\|_{H({\breve \bb};\tria_s)} \lesssim \sigma  {\EE(u,w,\tilde f)}$,
\item \label{ii}  {$\|\brbrR\|_{H({\breve \bb};\tria_s)}  \gtrsim {\EE(u,w,\tilde f)}$}, \new{provided that 
 $\sigma \in (0,\sigma_0]$ with $\sigma_0 \in (0,\bar{\sigma}]$ being sufficiently small.}
\end{enumerate}
Both  constants hidden in the $\lesssim$ and $\gtrsim$ symbols, and the upper bound for $\sigma_0$ depend only on the quantities mentioned in the statement of Proposition~\ref{lemma1new}.
\end{corollary}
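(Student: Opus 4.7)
I would prove both parts cell by cell, applying Lemmas~\ref{lem3} and \ref{lem6} on each $K \in \tria_s$ and then summing. The decisive new ingredient not yet exploited in those lemmas is that, since $(u,w,\tilde f)\in \DD_\tria$ and $\tilde\bb$, $\tilde c$ are piecewise polynomial w.r.t.\ the \emph{coarse} partition $\tria$, both residual quantities $\mu=w-u$ and $\lambda=\partial_{\tilde\bb}w+\tilde c w-\tilde f$ are piecewise polynomials of fixed bounded degree on $\tria$. On each coarse cell $K'\in \tria$, therefore, the inverse inequality applies at the coarse scale $\diam(K')$, and the combination of \eqref{26} and \eqref{26a}, which gives $\diam(K)/\diam(K')\eqsim \sigma$, will convert derivatives into $L_2$-norms at the price of a factor $\sigma$. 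Furthermore, \eqref{102} yields $|\breve\bb_K|^{-1}\lesssim \||\bb|^{-1}\|_{L_\infty(\Omega)}$ and $\diam(K)\leq \sigma$, so that every prefactor of the form $|\breve\bb_K|^{-1}\diam(K)$ arising in Lemma~\ref{lem3} is already of size $\sigma$.

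For (i), Lemma~\ref{lem3} reduces the task to bounding, in the $H(\breve\bb;\tria_s)$-norm, the terms $|\breve\bb_K|^{-1}\diam(K)\|\mu\|_{H(\breve\bb_K;\bar K)}$, its $\lambda$-analogue, and $\diam(K)\|\mu\|_{H^1(K)}$. The $L_2(\bar K)$-pieces of the $H(\breve\bb_K;\bar K)$-norm are handled by combining a Markov-type polynomial norm equivalence, $\|\mu\|_{L_\infty(\bar K)}\lesssim \|\mu\|_{L_\infty(K')}\lesssim |K'|^{-1/2}\|\mu\|_{L_2(K')}$, with the volume bookkeeping $\sum_{K\subset K'}|\bar K|\lesssim |K'|$, which follows from $\#\{K\in\tria_s\colon K\subset K'\}\lesssim \sigma^{-n}$ and $|\bar K|\lesssim \sigma^n|K'|$. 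This yields $\sum_{K\subset K'}\|\mu\|_{L_2(\bar K)}^2\lesssim \|\mu\|_{L_2(K')}^2$. The derivative contributions $\|\breve\bb_K\cdot \nabla\mu\|_{L_2(\bar K)}$ and $|\mu|_{H^1(K)}$ are treated analogously, using in addition the coarse-scale inverse inequality $\|\nabla\mu\|_{L_2(K')}\lesssim \diam(K')^{-1}\|\mu\|_{L_2(K')}$, whereupon the prefactor $\diam(K)$ supplies the extra ratio $\diam(K)/\diam(K')\lesssim \sigma$. Summing over $K'\in \tria$ and taking square roots gives $\|\breve{R}-\brbrR\|_{H(\breve\bb;\tria_s)}\lesssim \sigma\,\EE(u,w,\tilde f)$.

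For (ii), summing Lemma~\ref{lem6} over $K\in \tria_s$ and invoking $\EE(u,w,\tilde f)^2\eqsim \|\mu\|_{L_2(\Omega)}^2+\|\lambda\|_{L_2(\Omega)}^2$ yields
\[
\|\brbrR\|_{H(\breve\bb;\tria_s)}^2 + \sum_{K\in\tria_s}\diam(K)^2\bigl(|\mu|_{H^1(\bar K)}^2+|\lambda|_{H^1(\bar K)}^2\bigr)\gtrsim \EE(u,w,\tilde f)^2.
\]
The second summand is precisely of the type just bounded in step (i), and the same inverse-inequality-plus-polynomial-norm argument majorises it by $C\sigma^2\,\EE(u,w,\tilde f)^2$ with $C$ depending only on the quantities listed in the statement. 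Choosing $\sigma_0\in (0,\bar\sigma]$ so small that this term is absorbed into the right-hand side delivers (ii). The main technical obstacle in both parts is the careful bookkeeping of the $\sigma$-powers arising from the subcell multiplicity $\sigma^{-n}$ versus the subcell volume $\sigma^n|K'|$, together with the fact that the shadow $\bar K$ need not be contained in $K'$; this forces a passage through polynomial extension and Markov-type inequalities rather than a plain restriction to $K'$.
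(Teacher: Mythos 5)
Your proposal is correct and follows essentially the same route as the paper: apply Lemmas~\ref{lem3} and~\ref{lem6} cell-wise, then use the coarse-scale inverse inequality on the $\tria$-polynomials $\mu,\lambda$ together with the comparability of $\bar K$, $K$, and $K'$ (which the paper packages as~\eqref{213}, where you use Markov-type $L_\infty$ bookkeeping and volume counting) to extract the factor $\sigma$, and finally absorb the derivative terms for part~\eqref{ii} by taking $\sigma_0$ small enough.
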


\begin{proof} For $K' \in \tria$ and $p \in \cP_m(K')$, we have that
\begin{equation} \label{213}
\sum_{\{K \in \tria_s\colon K \subset K'\}}  |p|^2_{H^1(\bar{K})} \eqsim 
\sum_{\{K \in \tria_s\colon K \subset K'\}}  |p|^2_{H^1(K)}
= |p|^2_{H^1(K')} \lesssim
\diam(K')^{-2} \|p\|^2_{L_2(K')},
\end{equation}
with a constant depending on $\varrho$ and $m$.

By applying this type of estimate to $\lambda$ and $\mu$, preceded by an application of  Lemma~\ref{lem3} \new{whilst using $|\breve{\bb}_K|^{-1} \leq 2 \||\bb|^{-1}\|_{L_\infty(\Omega)}$ and $|\breve{\bb}_K|\leq \|\tilde \bb\|_{L_\infty(K)} \leq 2 \|\bb\|_{L_\infty(\Omega)}$}, we obtain
$$
\|\breve{R}-\brbrR\|_{H(\new{\breve \bb};\tria_s)} \lesssim \sigma \new{\EE(u,w,\tilde f)}.
$$

By summing the result of Lemma~\ref{lem6} over $K \in \tria_s$ and applying \eqref{213} with $p=\mu$ and $p=\lambda$, 
we infer that for $\sigma$ small enough,
$\|\brbrR\|_{H(\new{\breve \bb};\tria_s)} \gtrsim \new{\EE(u,w,\tilde f)}$. 
\end{proof}

The next proposition is almost Step~{\bf (I)} on page~\pageref{steps}, except that we still have to replace $\|\brbrR\|_{H(\new{\breve \bb};\Omega)}$ by $\|\brbrR\|_{\V_{\tria_s}}$, which will be done using the subsequent Lemma~\ref{lem4}\eqref{b}.

\begin{proposition} \label{prop1}
There exist a $\kappa>0$ and a $\sigma_1 \in (0,\sigma_0]$, 
that depend only on the quantities mentioned in the statement of Proposition~\ref{lemma1new},
 such that for $\sigma \in (0,\sigma_1]$, and any $\new{(u,w,f) \in \DD_\tria}$,
$$
\breve{b}_{\tria_s}(u,w;\brbrR)-\int_\Omega f \brbrR\,d{\bf x} \geq \kappa \new{\EE(u,w,\tilde f)} \|\brbrR\|_{H(\new{\breve \bb};\tria_s)}.
$$
\end{proposition}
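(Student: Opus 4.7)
My plan is to exploit the fact, implicit in \eqref{locallift}, that the \emph{exact} local lifts $\breve{R}_K$ assemble into a global $\breve{R}\in H(\breve\bb;\tria_s)$ which is the Riesz representer of the modified residual functional with respect to the inner product $\sll\cdot,\cdot\srr_{H(\breve\bb;\tria_s)}$. Summing \eqref{locallift} over $K\in\tria_s$ yields the fundamental identity
\begin{equation} \label{eq:plan1}
\breve b_{\tria_s}(u,w;v)-\int_\Omega f v\,d{\bf x}=\sll \breve{R},v\srr_{H(\breve\bb;\tria_s)}\qquad (v\in H(\breve\bb;\tria_s)),
\end{equation}
which holds in particular for the polynomial approximation $v=\brbrR$.

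The first step is to use \eqref{eq:plan1} with $v=\brbrR$ and decompose $\brbrR=\breve R+(\brbrR-\breve R)$, giving
\begin{equation*}
\breve b_{\tria_s}(u,w;\brbrR)-\int_\Omega f\brbrR\,d{\bf x}=\tvert \breve R\tvert_{H(\breve\bb;\tria_s)}^2+\sll \breve R,\brbrR-\breve R\srr_{H(\breve\bb;\tria_s)}.
\end{equation*}
The second summand is controlled by Cauchy--Schwarz as $\tvert \breve R\tvert\cdot\tvert \brbrR-\breve R\tvert$, and Proposition~\ref{equivalence} (summed over $K\in\tria_s$) tells us these three-bar norms are equivalent to $\|\cdot\|_{H(\breve\bb;\tria_s)}$ with constants depending only on the allowed quantities.

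The second step is to invoke Corollary~\ref{corol1}. Part (i) gives $\|\brbrR-\breve R\|_{H(\breve\bb;\tria_s)}\lesssim \sigma\,\EE(u,w,\tilde f)$, while part (ii) gives $\|\brbrR\|_{H(\breve\bb;\tria_s)}\gtrsim \EE(u,w,\tilde f)$. A triangle inequality then yields, for $\sigma\leq\sigma_0$ reduced if necessary,
\begin{equation*}
\|\breve R\|_{H(\breve\bb;\tria_s)}\eqsim \|\brbrR\|_{H(\breve\bb;\tria_s)}\eqsim \EE(u,w,\tilde f).
\end{equation*}

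Combining these ingredients produces
\begin{equation*}
\breve b_{\tria_s}(u,w;\brbrR)-\int_\Omega f\brbrR\,d{\bf x}\geq c_1\|\breve R\|^2_{H(\breve\bb;\tria_s)}-c_2\sigma\,\|\breve R\|_{H(\breve\bb;\tria_s)}\,\EE(u,w,\tilde f),
\end{equation*}
and the right-hand side is bounded below by $(c_1-c_2'\sigma)\,\EE(u,w,\tilde f)^2$, which, once $\sigma$ is taken small enough to render this constant positive, is in turn $\gtrsim \EE(u,w,\tilde f)\,\|\brbrR\|_{H(\breve\bb;\tria_s)}$ by the equivalence above. This fixes $\kappa$ and $\sigma_1\in(0,\sigma_0]$ and completes the argument. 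The only subtlety is the balancing of constants: one must verify that the universal multiplicative factor in Corollary~\ref{corol1}(i) multiplied by $\sigma$ can be made strictly less than the positive lower constant produced by Corollary~\ref{corol1}(ii) and Proposition~\ref{equivalence}; this is the reason a single threshold $\sigma_1$ suffices. No new perturbation arguments are needed beyond those already encoded in Corollary~\ref{corol1}.
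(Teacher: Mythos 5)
Your proof takes the same basic structure as the paper's: the identity $\breve b_{\tria_s}(u,w;\brbrR)-\int_\Omega f\,\brbrR = \sll\breve R,\brbrR\srr_{H(\breve\bb;\tria_s)}$ obtained from \eqref{locallift}, Cauchy--Schwarz on the off-diagonal part, Proposition~\ref{equivalence} for norm equivalence, and Corollary~\ref{corol1} for the quantitative bounds. The one deviation is in how the inner product is split: you isolate $\tvert\breve R\tvert^2$ and treat $\sll\breve R,\brbrR-\breve R\srr$ as the perturbation, whereas the paper isolates $\tvert\brbrR\tvert^2$ and treats $\sll\breve R-\brbrR,\brbrR\srr$ as the perturbation. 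This seemingly cosmetic switch carries a real consequence.

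With the paper's split, the diagonal term is $\tvert\brbrR\tvert^2\eqsim\|\brbrR\|^2=\|\brbrR\|\cdot\|\brbrR\|\gtrsim\EE(u,w,\tilde f)\,\|\brbrR\|$ directly from Corollary~\ref{corol1}(ii), and nothing more is needed. With your split, you land on $(c_1-c_2'\sigma)\,\EE(u,w,\tilde f)^2$ and then invoke ``the equivalence above'' to deduce $\gtrsim\EE(u,w,\tilde f)\,\|\brbrR\|$. That last step requires $\EE(u,w,\tilde f)\gtrsim\|\brbrR\|_{H(\breve\bb;\tria_s)}$, i.e.\ the \emph{reverse} of Corollary~\ref{corol1}(ii). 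You assert $\|\breve R\|\eqsim\|\brbrR\|\eqsim\EE$, but from (i) and (ii) alone one only gets $\|\breve R\|\eqsim\|\brbrR\|$ (for $\sigma$ small) and $\|\brbrR\|\gtrsim\EE$; the bound $\|\brbrR\|\lesssim\EE$ is neither stated in the corollary nor cited from anywhere else. So as written there is a gap. The fix is easy and stays within your decomposition: do not pass through $\EE^2$. From $c_1\|\breve R\|^2-c_2\sigma\|\breve R\|\,\EE$, use $\|\breve R\|\eqsim\|\brbrR\|$ (which \emph{does} follow from (i), (ii) for small $\sigma$) and then $\EE\leq C\|\brbrR\|$ from (ii) to write $c_1\|\breve R\|^2-c_2\sigma\|\breve R\|\,\EE\gtrsim\|\brbrR\|\big(c\,\|\brbrR\|-C'\sigma\,\EE\big)\geq\|\brbrR\|\big(c'-C''\sigma\big)\EE$, which is $\gtrsim\EE\,\|\brbrR\|$ for $\sigma$ small enough. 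Alternatively, adopt the paper's split, which sidesteps the issue entirely.
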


\begin{proof} With $\breve{R}|_K:=\breve{R}_K(u,w;f)$, its definition in \eqref{locallift} shows that
$$
\breve{b}_{\tria_s}(u,w;\brbrR)-\int_\Omega f \brbrR\,d{\bf x}=\sum_{K \in \tria_s}  \langle\!\langle   {\breve{R}}_K,  \brbrR_K\rangle\!\rangle_{H(\new{\breve \bb}_K;K)}.
$$
Thanks to the equivalence of norms from Proposition~\ref{equivalence}, an application of  Corollary~\ref{corol1}\eqref{i} shows that
$$
\big| \sum_{K \in \tria_s} \langle\!\langle   \breve{R}_K-\brbrR_K,  \brbrR_K\rangle\!\rangle_{H(\new{\breve \bb}_K;K)}\big| \lesssim  \sigma 
\new{\EE(u,w,\tilde f)}
 \|\brbrR\|_{H(\new{\breve \bb};\tria_s)}.
$$
For $\sigma$ being sufficiently small, an application of Corollary~\ref{corol1}\eqref{ii} shows that
$$
\tvert \brbrR \tvert_{H(\new{\breve \bb};\tria_s)}^2
 \eqsim\|\brbrR\|^2_{H(\new{\breve \bb};\tria_s)}
\gtrsim \new{\EE(u,w,\tilde f)} \|\brbrR\|_{H(\new{\breve \bb};\tria_s)},
$$
by which the proof is easily completed.
\end{proof}

\begin{lemma} \label{lem4} For $\new{(u,w,f) \in \DD_\tria}$, $\sigma \in (0,\sigma_0]$, it holds that
\renewcommand{\theenumi}{\alph{enumi}}
\begin{enumerate}
\item \label{a} $\sum_{K \in \tria_s} \diam(K)^2\|\brbrR_K\|_{H^1(K)}^2 \lesssim \sigma^2 \|\brbrR\|_{H(\new{\breve \bb};\Omega)}^2$,
\item \label{b} $\big| \|\brbrR\|_{H(\new{\breve \bb};\tria_s)} - \|\brbrR\|_{\V_{\tria_s}}\big| \lesssim \sigma  \|\brbrR\|_{H(\new{\breve \bb};\tria_s)}$,
\end{enumerate}
depending only on the quantities mentioned in the statement of Proposition~\ref{lemma1new}.
\end{lemma}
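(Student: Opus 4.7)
Both parts rest on the explicit formula \eqref{near-opt} for $\brbrR_K$, which expresses it as a very simple polynomial whose ``coefficients'' are values of $\mu=w-u$ and $\lambda=\partial_{\tilde\bb}w+\tilde c w-\tilde f$ along the inflow shadow of $K\in \tria_s$. Since under the standing hypothesis $(u,w,f)\in \DD_\tria$ these data functions are piecewise polynomials on the coarser partition $\tria$, inverse inequalities on a parent cell $K'\in \tria$ produce factors $\diam(K')^{-1}$; combined with the subgrid scaling $\diam(K)\eqsim \sigma\diam(K')$ guaranteed by \eqref{26}--\eqref{26a}, this is the mechanism that will supply the factor $\sigma$ we are after.

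For \eqref{a}, I would split $\nabla\brbrR_K$ into the component along $\breve\bb_K$ and the $n-1$ transverse components. Differentiating \eqref{near-opt} gives $|\breve\bb_K|\,\partial_x\brbrR_K=\mu(\bar x_-(\cdot),\cdot)$, so $\|\partial_x\brbrR_K\|_{L_2(K)}=|\breve\bb_K|^{-1}\|\partial_{\breve\bb_K}\brbrR_K\|_{L_2(K)}$; together with $|\breve\bb_K|^{-1}\lesssim 1$ (by \eqref{102}) and $\diam(K)\lesssim \sigma$ this immediately yields the desired $\sigma^2$ bound for the $\breve\bb_K$-component. Each transverse derivative $\partial_{y_i}\brbrR_K$ decomposes into three contributions: (i) a term multiplied by $x-\bar x_-(\by)=O(\diam(K))$ and involving first derivatives of $\mu,\lambda$ on $K'$, (ii) the term $-\partial_x\brbrR_K\,\partial_{y_i}\bar x_-$ which by \eqref{104} is already handled by the previous step, and (iii) a pure trace derivative of $\lambda-(\tilde c+\breve d_K)\mu$ on the shadow, of size $\diam(K')^{-1}\|(\mu,\lambda)\|_{L_\infty(K')}$. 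Applying the polynomial inverse inequalities $\|\nabla p\|_{L_\infty(K')}\lesssim \diam(K')^{-1}\|p\|_{L_\infty(K')}$ and $\|p\|_{L_\infty(K')}^2\lesssim \diam(K')^{-n}\|p\|_{L_2(K')}^2$ to the polynomial data, using $|K|\eqsim \diam(K)^n$, and summing first over the $\lesssim \sigma^{-n}$ cells $K\subset K'$ (valid by \eqref{26a}) and then over $K'\in \tria$, the powers of $\diam(K)/\diam(K')\lesssim \sigma$ match exactly to produce $\sum_K\diam(K)^2\|\nabla\brbrR_K\|_{L_2(K)^n}^2\lesssim \sigma^2\bigl(\|\mu\|_{L_2(\Omega)}^2+\|\lambda\|_{L_2(\Omega)}^2\bigr)$, which by Corollary~\ref{corol1}\eqref{ii} is $\lesssim\sigma^2\|\brbrR\|_{H(\breve\bb;\tria_s)}^2$. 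The $L_2(K)$-part of $\|\brbrR_K\|_{H^1(K)}^2$ is handled trivially since $\diam(K)^2\lesssim \sigma^2$.

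For \eqref{b}, set $a_K:=\sqrt{\|\brbrR_K\|_{L_2(K)}^2+\|\bb\cdot\nabla\brbrR_K\|_{L_2(K)}^2}$ and $b_K$ analogously with $\breve\bb_K$ in place of $\bb$, so that $\|\brbrR\|_{\V_{\tria_s}}=\|(a_K)\|_{\ell_2}$ and $\|\brbrR\|_{H(\breve\bb;\tria_s)}=\|(b_K)\|_{\ell_2}$. The algebraic identity $a_K^2-b_K^2=\langle (\bb+\breve\bb_K)\cdot\nabla\brbrR_K,(\bb-\breve\bb_K)\cdot\nabla\brbrR_K\rangle_{L_2(K)}$ combined with $\|(\bb+\breve\bb_K)\cdot\nabla\brbrR_K\|_{L_2(K)}\leq a_K+b_K$ gives $|a_K-b_K|\leq \|(\bb-\breve\bb_K)\cdot\nabla\brbrR_K\|_{L_2(K)}\leq \|\bb-\breve\bb_K\|_{L_\infty(K)}\|\nabla\brbrR_K\|_{L_2(K)^n}$. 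Invoking the estimate $\|\bb-\breve\bb_K\|_{L_\infty(K)}\lesssim \diam(K)$, which follows from the Lipschitz regularity of $\tilde\bb$ on $K'$ (with $|\tilde\bb|_{W^1_\infty(K')^n}\lesssim |\bb|_{W^1_\infty(K')^n}$) and the definition of $\breve\bb_K$ as the $K$-average of $\tilde\bb$, together with the reverse triangle inequality in $\ell_2$ and an application of \eqref{a}, yields $\bigl|\|\brbrR\|_{H(\breve\bb;\tria_s)}-\|\brbrR\|_{\V_{\tria_s}}\bigr|^2\leq\sum_K(a_K-b_K)^2\lesssim \sum_K\diam(K)^2\|\nabla\brbrR_K\|_{L_2(K)^n}^2\lesssim\sigma^2\|\brbrR\|_{H(\breve\bb;\tria_s)}^2$.

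The main obstacle is part \eqref{a}: the careful bookkeeping of the three kinds of contributions to the transverse gradient, and in particular the fact that for contributions (i) and (iii) it is essential to apply the polynomial inverse inequality on the coarse cell $K'$ rather than on the fine cell $K$, so that the correct power of $\sigma$ emerges only after the sum over $K\subset K'$ is carried out.
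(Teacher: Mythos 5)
Your proof of part \eqref{a} is essentially the paper's argument, only organized differently: instead of splitting $\brbrR_K$ into the three pieces $\brbrR_{1,K}+\brbrR_{2,K}+\brbrR_{3,K}$ as in the paper (and invoking Poincar\'e on streamlines for $\brbrR_{1,K}$), you split $\nabla\brbrR_K$ into a $\breve\bb_K$-aligned part and transverse contributions (i)--(iii). This is fine and marginally cleaner, since the identity $\partial_x\brbrR_K=|\breve\bb_K|^{-1}\partial_{\breve\bb_K}\brbrR_K$ makes Poincar\'e unnecessary; the key mechanism -- inverse inequalities applied on the \emph{coarse} cell $K'$, combined with $\diam(K)/\diam(K')\lesssim\sigma$ -- is the same. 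Your treatment of (iii) silently absorbs what the paper handles via the explicit split $\breve d_{K'}$ vs.\ $\breve d_K$; this works because for each fixed $K$ the function $\lambda-(\tilde c+\breve d_K)\mu$ is still a polynomial of bounded degree on $K'$, so the inverse inequality applies on $K'$.

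There is, however, a gap in your justification of part \eqref{b}. You invoke $\||\bb-\breve\bb_K|\|_{L_\infty(K)}\lesssim\diam(K)$ and attribute it to ``the Lipschitz regularity of $\tilde\bb$ on $K'$ \dots and the definition of $\breve\bb_K$ as the $K$-average of $\tilde\bb$.'' But this argument only shows $\||\tilde\bb-\breve\bb_K|\|_{L_\infty(K)}\lesssim\diam(K)\,|\tilde\bb|_{W^1_\infty(K)^n}$; it does not address the term $\bb-\tilde\bb$, which on $K\subset K'$ is \emph{not} $O(\diam(K))$ in general (from $W^1_\infty(K')$-regularity and best-approximation one only gets $\||\bb-\tilde\bb|\|_{L_\infty(K')}\lesssim\diam(K')\,|\bb|_{W^1_\infty(K')^n}$, a factor $1/\sigma$ larger). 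The paper asserts the bound $\||\breve\bb_K-\bb|\|_{L_\infty(K)}\le D\,\diam(K)|\bb|_{W^1_\infty(K)^n}$ directly via the constant $D$ from \eqref{100}, rather than deriving it along the route you sketch; you should either cite that estimate as the paper does or acknowledge that your derivation only handles the $\tilde\bb$-to-$\breve\bb_K$ piece and that the $\bb$-to-$\tilde\bb$ piece requires a separate argument.
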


\begin{proof}\eqref{a}.
For $K \in \tria_s$, we split $\brbrR_K=\brbrR_{K,1}+\brbrR_{K,2}+\brbrR_{K,3}$ defined by
\begin{align*}
\brbrR_{1,K}(x,{\bf y})& :=|{\new{\breve \bb}_K}|^{-1} \mu(\bar{x}_{\!-}({\bf y}),{\bf y}) (x-\bar{x}_{\!-}({\bf y})),\\
\brbrR_{2,K}(x,{\bf y})& :=\big(\lambda-(\new{\tilde c}+\new{\breve d}_{K'})\mu \Big)(\bar{x}_{\!-}({\bf y}),{\bf y}),\\
\brbrR_{3,K}(x,{\bf y})& :=(\new{\breve d}_{K'}-\new{\breve d}_K)\mu (\bar{x}_{\!-}({\bf y}),{\bf y}).
\end{align*}
where $K' \in \tria$ is such that $K \subset K'$. Correspondingly, we split $\brbrR=\brbrR_1+\brbrR_2+\brbrR_3$.

Since $\brbrR_{1,K}$ vanishes on $\partial \bar{K}_{\!-}$, an application of Poincar\'{e}'s inequality on each  streamline following $\new{\breve \bb}_K$ shows that
$ \|\brbrR_{1,K}\|_{L_2(\bar{K})} \lesssim |\new{\breve \bb}_K|^{-1} \diam(\bar{K})  \|\partial_{\new{\breve \bb}_K} \brbrR_{1,K}\|_{L_2(\bar{K})}$ (cf. possibly \cite[Prop.~4.3]{35.8566}).
From the fact that $\brbrR_{1,K}$ is polynomial, $\diam(\bar{K}) \lesssim \diam(K)$,
and $\partial_{\new{\breve \bb}_K} \brbrR_{1,K}=\partial_{\new{\breve \bb}_K} \brbrR_{K}$,
 by an application of the inverse inequality we obtain $\sum_{K \in \tria_s} \diam(K)^2\|\brbrR_{1,K}\|_{H^1(K)}^2 \lesssim \sum_{K \in \tria_s} |\new{\breve \bb}_K|^{-2}\diam(K)^2
 \|\partial_{\new{\breve \bb}_K} \brbrR_K\|_{L_2(K)}^2 \lesssim \sigma^2 \|\brbrR\|_{H(\new{\breve \bb};\Omega)}^2$.

Recalling from \eqref{104} that  $|\bar{x}_{\!-}|_{W^1_\infty(K)} \lesssim 1$, and since $\lambda-(\new{\tilde c}+\new{\breve d}_{K'})\mu$ is polynomial on $K$, we have $\|(x,{\bf y})\mapsto (\lambda-(\new{\tilde c}+\new{\breve d}_{K'})\mu)(\bar{x}_{\!-}({\bf y}),{\bf y})\|_{H^1(K)} \lesssim \|\lambda-(\new{\tilde c}+\new{\breve d}_{K'})\mu\|_{H^1(K)}$.
Now using that for $\tria \ni K' \supset K$, $\lambda-(\new{\tilde c}+\new{\breve d}_{K'})\mu$ is polynomial on $K'$, an application of \eqref{213} shows that
$$
\sum_{K \in \tria_s} \diam(K)^2\|\brbrR_{2,K}\|_{H^1(K)}^2 \lesssim
 \sigma^2 \new{\EE(u,w,\tilde f)^2}
\lesssim \sigma^2 \|\brbrR\|_{H(\new{\breve \bb};\tria_s)}^2,
$$
where the last inequality follows from Corollary~\ref{corol1}\eqref{ii}.

Again by $|\bar{x}_{\!-}|_{W^1_\infty(K)} \lesssim 1$, we have \new{$\|\brbrR_{3,K}\|_{H^1(K)} \lesssim |\new{\tilde \bb}|_{W^1_\infty(K')} \|\mu\|_{H^1(K)}$,} which together with 
 \eqref{213} yields that
$$
\sum_{K \in \tria_s} \diam(K)^2 \|\brbrR_{3,K}\|^2_{H^1(K)} \lesssim \sum_{K \in \tria_s} \diam(K)^2 \|\mu\|_{L_2(K)}^2 
\lesssim \sigma^2 \|\brbrR\|_{H(\new{\breve \bb};\tria_s)}^2.
$$ 
by Corollary~\ref{corol1}, \eqref{ii}, which completes the proof of \eqref{a}.

\eqref{b}. From the triangle inequality, and $\||\new{\breve \bb}_K-\bb|\|_{L_\infty(K)} \leq D \diam(K) |\bb|_{W^1_\infty(K)^n}$, we infer that
$\big| \|\brbrR\|_{H(\new{\breve \bb};\tria_s)} - \|\brbrR\|_{\V_{\tria_s}}\big| \lesssim \sqrt{\sum_{K \in \tria_s} \diam(K)^2|\brbrR_K|_{H^1(K)}^2} \lesssim \sigma \|\brbrR\|_{H(\new{\breve \bb};\Omega)}$ by \eqref{a}, which is \eqref{b}.
\end{proof}

Proposition~\ref{prop1} together with Lemma~\ref{lem4}\eqref{b} complete the proof of 
$$
\breve{b}_{\tria_s}(u,w;\brbrR)-\int_\Omega f \brbrR\,d{\bf x} \geq \kappa \new{\EE(u,w,\tilde f)} \|\brbrR\|_{\V_{\tria_s}}
$$
for sufficiently small $\sigma>0$, being Step~{\bf (I)} in our proof of Proposition~\ref{lemma1new}.

Step~{\bf (II)} is implied by the next result when we use that $\|u-w\|_{L_2(\Omega)} \leq \new{\EE(u,w,\tilde f)}$.

\begin{proposition} For $\new{(u,w,f) \in \DD_\tria}$, $\sigma \in (0,\sigma_0]$ sufficiently small, it holds that
$$
|\new{\tilde b}_{\tria_s}(u,w;\brbrR) - \breve b_{\tria_s}(u,w;\brbrR)|  \lesssim  \sigma \|u-w\|_{L_2(\Omega)}\|\brbrR\|_{\V_{\tria_s}}.
$$
Both the upper bound for $\sigma$ and the constant hidden in the $\lesssim$-symbol 
depend only on the quantities mentioned in the statement of Proposition~\ref{lemma1new}.
\end{proposition}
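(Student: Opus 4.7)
The plan is first to reduce the difference to three explicit volume integrals by an elementary cellwise rearrangement, and then to estimate each contribution separately; the crucial factor $\sigma$ for the third one comes from Lemma~\ref{lem4}. Setting $\mu := w-u$ and applying the divergence theorem to $\divv(\mu v\,\tilde\bb)$ on each $K \in \tria_s$ rewrites
\begin{equation*}
\tilde b_K(u,w;v) = \int_K (\tilde c\, u + \tilde\bb\cdot\nabla u)\,v\,dx + \int_{\partial K} \mu v\, \tilde\bb\cdot\bn_K\,ds,
\end{equation*}
which exhibits the same structure as $\breve b_K$. Subtracting, and converting the remaining boundary difference $\int_{\partial K} \mu v\, (\tilde\bb - \breve\bb_K)\cdot\bn_K\,ds$ back to a volume integral using $\divv \breve\bb_K = 0$ (cf.\ Remark~\ref{rem:particular}), one obtains
\begin{equation*}
(\tilde b_K - \breve b_K)(u,w;v) = \int_K (\divv\tilde\bb - \breve d_K)\mu v\,dx + \int_K v(\tilde\bb - \breve\bb_K)\cdot\nabla\mu\,dx + \int_K \mu(\tilde\bb - \breve\bb_K)\cdot\nabla v\,dx.
\end{equation*}

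The common source of smallness is that $\breve\bb_K$ and $\breve d_K$ are $L_1(K)$-averages, which yields the Poincar\'e-type bounds $\|\tilde\bb - \breve\bb_K\|_{L_\infty(K)^n} \lesssim \diam(K)\,|\tilde\bb|_{W^1_\infty(K)^n}$ and $\|\divv\tilde\bb - \breve d_K\|_{L_\infty(K)} \lesssim \diam(K)\,|\divv\tilde\bb|_{W^1_\infty(K)}$. Specializing to $v=\brbrR$, the first term is bounded by $\diam(K)\|\mu\|_{L_2(K)}\|\brbrR_K\|_{L_2(K)}$; since $\diam(K) \leq \sigma^2$ by \eqref{26}, summation yields an $O(\sigma)\|\mu\|_{L_2(\Omega)}\|\brbrR\|_{\V_{\tria_s}}$ contribution. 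For the second term I would exploit that $\mu = w-u$ is piecewise polynomial on the \emph{coarse} partition $\tria$: for $K \in \tria_s$ with $K \subset K' \in \tria$, use $\diam(K) \le \sigma\diam(K')$, group subsimplices over the coarse parent, apply Cauchy--Schwarz together with the additivity $\sum_{K \subset K'} |\mu|_{H^1(K)}^2 = |\mu|_{H^1(K')}^2$, and finish with the inverse inequality $|\mu|_{H^1(K')}\lesssim\diam(K')^{-1}\|\mu\|_{L_2(K')}$; this produces the desired $\sigma$.

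The hard part will be the third term, because $\brbrR$ is only piecewise polynomial on the \emph{fine} partition $\tria_s$, and a naive inverse estimate $\|\nabla\brbrR_K\|_{L_2(K)} \lesssim \diam(K)^{-1}\|\brbrR_K\|_{L_2(K)}$ would exactly cancel the $\diam(K)$ coming from $\tilde\bb - \breve\bb_K$, leaving no factor $\sigma$. The resolution is that Lemma~\ref{lem4}\eqref{a} is specifically tailored to the explicit form of $\brbrR$ and gives $\sum_K \diam(K)^2 |\brbrR_K|_{H^1(K)}^2 \lesssim \sigma^2 \|\brbrR\|_{H(\breve\bb;\tria_s)}^2$. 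Combined with the Cauchy--Schwarz estimate
\begin{equation*}
\Bigl|\sum_K \int_K \mu(\tilde\bb - \breve\bb_K)\cdot\nabla\brbrR_K\,dx\Bigr| \lesssim \|\mu\|_{L_2(\Omega)}\Bigl(\sum_K \diam(K)^2 |\brbrR_K|_{H^1(K)}^2\Bigr)^{1/2}
\end{equation*}
and the norm equivalence $\|\brbrR\|_{H(\breve\bb;\tria_s)} \eqsim \|\brbrR\|_{\V_{\tria_s}}$ of Lemma~\ref{lem4}\eqref{b}, this supplies the missing $\sigma$. Summing the three contributions then gives the desired estimate, with the admissible upper bound on $\sigma$ inherited from \eqref{100} and Lemma~\ref{lem4}.
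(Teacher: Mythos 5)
Your proof follows the same basic strategy as the paper (pass from the modified to the polynomial bilinear form cellwise, extract a factor involving $\diam(K)$ or $\sigma$, and invoke Lemma~\ref{lem4}\eqref{a} and \eqref{213} to handle the $\nabla\brbrR$ and $\nabla\mu$ contributions), but via a genuinely different decomposition. You convert the boundary difference entirely into three volume integrals, whereas the paper \emph{keeps} the boundary term and, observing that $\int_{\partial K}(\tilde\bb-\breve\bb_K)\cdot\bn_K\,ds = |K|\breve d_K$, replaces $z=\mu v$ by $z-\bar z$ in both the volume and the boundary integrals simultaneously; a single trace-theorem argument then yields the bound $\diam(K)\,|z|_{W^1_1(K)}$, which splits into exactly the two pieces you address in your second and third terms.

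The place where your route needs care is your first term. The estimate $\|\divv\tilde\bb-\breve d_K\|_{L_\infty(K)}\lesssim\diam(K)\,|\divv\tilde\bb|_{W^1_\infty(K)}$ involves \emph{second} derivatives of $\tilde\bb$, and $|\divv\tilde\bb|_{W^1_\infty}$ is not among the quantities permitted to enter the constants of Proposition~\ref{lemma1new} (only $\sup_{K'\in\tria}\|\bb\|_{W^1_\infty(K')^n}$ is allowed, and for $m_\bb\geq 2$ the second derivatives of $\tilde\bb$ are not uniformly controlled by this). Your step can be repaired: since $\divv\tilde\bb$ is a polynomial of fixed degree on the coarse cell $K'\supset K$, an inverse inequality gives $|\divv\tilde\bb|_{W^1_\infty(K')}\lesssim\diam(K')^{-1}\|\divv\tilde\bb\|_{L_\infty(K')}\lesssim\diam(K')^{-1}|\bb|_{W^1_\infty(K')^n}$, and then it is the ratio $\diam(K)/\diam(K')\leq\sigma$, not $\diam(K)\leq\sigma^2$ directly, that supplies the smallness (so you get a factor $\sigma$, not $\sigma^2$, which is still of course enough). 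As it stands, your claim that the first term is bounded by $\diam(K)\|\mu\|_{L_2(K)}\|\brbrR_K\|_{L_2(K)}$ hides an unbounded constant. The paper's $z-\bar z$ trick achieves the same end without ever touching second derivatives of $\tilde\bb$, which is precisely why they formulate that term with $\breve d_K$ intact and shift the oscillation onto $z$; this is the one point where their decomposition is genuinely slicker. Your treatment of the second and third terms is correct and matches theirs.
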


\begin{proof} For $K \in \tria_s$ and sufficiently smooth $u$, $w$, and $v$, it holds that
$$
\new{\tilde b}_K(u,w;v)=\int_K (\new{\tilde c}u +\new{\tilde \bb}\cdot \nabla u)v \,d{\bf x}+\int_{\partial K} \new{\tilde \bb}\cdot {\bf n}_K (w-u)v \,ds,
$$
and so
$$
\new{\tilde b}_K(u,w;v) - \breve b_K(u,w;v)=-\int_K \new{\breve d}_K (w-u)v\,d{\bf x}+\int_{\partial K} (\new{\tilde \bb}-\new{\breve \bb}_K)\cdot {\bf n}_K(w-u)v \,ds.
$$
With $z:=(w-u)v$, and $\bar{z}:=|K|^{-1} \int_K z\,d{\bf x}$, recalling that $\new{\breve d}_K=|K|^{-1}\int_K \divv \new{\tilde \bb}\,d {\bf x}$ an application of the trace theorem shows that
\begin{align*}
\Big|-\int_K \new{\breve d}_K z\,d{\bf x}&+\int_{\partial K} (\new{\tilde \bb}-\new{\breve \bb}_K)\cdot {\bf n}_Kz \,ds\Big|\\
&=\Big|-\int_K \new{\breve d}_K (z-\bar{z}) \,d{\bf x}+\int_{\partial K} (\new{\tilde \bb}-\new{\breve \bb}_K)\cdot {\bf n}_K (z-\bar{z}) \,ds\Big|\\
&\lesssim \|\divv \new{\tilde \bb}\|_{L_\infty(K)} \|z-\bar{z}\|_{L_1(K)}+\||\new{\tilde \bb}-\new{\breve \bb}_K|\|_{L_\infty(K)} |z|_{W^1_1(K)}\\
&\lesssim \diam(K)|z|_{W^1_1(K)}\\
& \lesssim \diam(K)\big(\|w-u\|_{L_2(K)}|v|_{H^1(K)}+\|v\|_{L_2(K)} |w-u|_{H^1(K)}\big).
\end{align*}
By substituting $v=\brbrR_K$, summing over $K \in \tria_s$, and applying the Cauchy-Schwarz inequality we find that
\begin{align*}
|\new{\tilde b}_{\tria_s}(u,w;\brbrR) - \breve b_{\tria_s}(u,w;\brbrR)|
& \lesssim  \|w-u\|_{L_2(\Omega)} \sqrt{\sum_{K \in \tria_s} \diam(K)^2|\brbrR|_{H^1(K)}^2} \\ & \quad+\|\brbrR\|_{L_2(\Omega)} 
\sqrt{\sum_{K \in \tria_s} \diam(K)^2|u-w|_{H^1(K)}^2}\\
& \lesssim \sigma \|u-w\|_{L_2(\Omega)}\|\brbrR\|_{H(\new{\breve \bb};\tria_s)}
\end{align*}
where we have applied \eqref{213} and Lemma~\ref{lem4}\eqref{a}. Finally, for sufficiently small $\sigma$, in the last expression  $\|\brbrR\|_{H(\new{\breve \bb};\tria_s)}$ can be replaced in view of Lemma~\ref{lem4}\eqref{b}  by $\|\brbrR\|_{\V_{\tria_s}}$.
\end{proof}

Since we have performed Steps~{\bf (I)}-{\bf (II)} on page~\pageref{steps}, the proof of Proposition~\ref{lemma1new} is complete.\hfill $\Box$
\section{Effective  Mark and Refinement Strategy for an Adaptive DPG  method}\label{sec:red}
The key common ingredient of an adaptive solution strategy for a PDE is a collection of local error indicators associated
with the current partition $\Tr$ underlying the discretization. While an individual indicator does not characterize the actual local error
the accumulation of all indicators is equivalent to the global current approximation error.
Based on the error indicators one contrives a {\em marking strategy}
which identifies a subset $\cM \subset \Tr$ of {\em marked cells} to be refined in the subsequent adaptive step.
The perhaps most prominent marking strategy is based on a {\em bulk criterion}, sometimes called "D\"orfler Marking" where
one collects (a possibly small number of) cells for which the accumulated combined indicators capture at least a given fixed portion of
the global a posteriori error bound. While this is usually perceived as a heuristically very plausible strategy, a rigorous convergence and complexity analysis
is actually quite intricate. It typically comes in two stages, namely establishing first that such a strategy reduces the current error by a fixed 
ratio, and second to estimate the number of new degrees of freedom incurred by the refinement step.
This paradigm has been studied extensively and is by now well understood  for problems of elliptic type where the dominating effect is diffusion.
The first step of error reduction hinges on (near-){\em Galerkin orthogonality} and is greatly helped by the fact that the \new{common residual based} error indicators contain as an explicit factor a power of the  respective cell diameter. Thus, a refinement does decrease the indicators. 

In the current scenario of transport equations the situation looks similar at  the first glance. Using
$(u, w) \in \breve{\U}^\delta_\Tr$ as primal unknowns, we have a hierarchy of {\em nested} trial spaces at hand, see Remark 
\ref{rem:nested}. Due to the product structure of the test search spaces
we have computable local error indicators associated with the current discretization whose sum is, thanks to Theorem \ref{th2new}, \new{modulo data oscillation} uniformly equivalent to the error in the trial metric.  This suggests using a similar bulk criterion in a {\em mark-and-refine framework} to drive adaptive refinements which is, in fact our choice in the subsequent
discussion.

A closer look reveals, however, some essential distinctions which may actually \new{nourish} some doubts about whether such strategies 
work in a transport problem just as well as in a diffusion problem. The error indicators in the form of projected lifted residuals depend
of course on the mesh defining the DPG scheme but they do {\em not} contain any local mesh size factor that ensures a decay under
refinement. In contrast to the usual way of analyzing residual based a posteriori error estimators
we are able to deduce a fixed error reduction
rate only when starting from a Petrov-Galerkin solution using what one may call {\em Petrov-Galerkin orthogonality} in place of Galerkin orthogonality.
Moreover, there is actually an {\em infinite} family of equivalent a posteriori bounds obtained for {\em any} refinement of the current 
partition arising from different mesh-dependent Riesz liftings. 
A key observation, which we heavily exploit and which may actually be of interest in its own right, is the interrelation of these
error indicators with yet another completely {\em mesh-independent} variant representing the residual for a least squares formulation. 

As indicated by these comments the crucial issue for adaptivity in the context of transport equations is the {\em effectivity} of a given
 mark and refinement strategy in the sense of a guaranteed error reduction rate. The basic structure of a subsequent complexity analysis can instead be expected to be less problem specific. We therefore confine the subsequent discussion entirely to the issue of effectivity which we are currently only able
to  fully establish in one spatial dimension $n=1$.

 For $n>1$ we will employ a {\em downstream enriched} refinement strategy where additionally cells downwind from the marked cells are refined as well.
Our derivation of effectivity in this case will partly be based on a conjecture.

\subsection{Setting and results}\label{ssec:5.1} 
\newcommand{\cD}{\mathcal{D}}
\newcommand{\cS}{\mathcal{S}}
 In view of the already considerable level of technicality we confine the subsequent discussion to the 
 case of a {\em constant }convection field $\bb$, and a piecewise constant reaction coefficient $c$ with respect to the current
 partition $\Tr$  for the trial space.
 In an adaptive setting the latter means that necessarily $c$ is piecewise constant w.r.t. the initial partition $\tria_\bot$,
  i.e., we always assume that
\be
\label{specified}
 \bb(x)\equiv \bb,\quad c=(c_{K'})_{K' \in \tria_{\bot}} \in \cP_0(\tria_{\bot}).
\ee

Given $(u,w) \in \breve{\U}_\tria^\delta$ and $f \in \F_\tria^\delta$, from \eqref{s4}
recall the definition of the {\em projected lifted residual}
$$
R^\delta_{\tria_s}=R^\delta_{\tria_s}(u,w;f)=(R^\delta_K)_{K \in \tria_s} \in \bar{\V}^\delta_{\tria_s} \subset \V_{\tria_s}.
$$
For a collection of marked cells ${\mathcal M} \subset \tria$, we set
$$
\tria_s({\mathcal M})=\{K \in \tria_s\colon K \subset \cup_{K' \in {\mathcal M}} K'\}
$$
for the corresponding portion of the test-subgrid with the convention $\tria_s=\tria_s(\tria)$.
We use the notation $R^\delta_{\tria_s({\mathcal M})}$ to denote $(R^\delta_K)_{K \in \tria_s({\mathcal M})}$.

Aside from a partition $\tria$ and its refinement $\tria_s$, we consider a refined partition $\tilde{\tria}$ with companion refinement $\tilde{\tria}_s$.
Note that $\breve{\U}^\delta_\tria \subset \breve{\U}^\delta_{\tilde{\tria}}$, $\bar{\V}^\delta_{\tria_s} \subset \bar{\V}^\delta_{\tilde{\tria}_s}$, and $\F^\delta_{\tria} \subset \F^\delta_{\tilde{\tria}}$, see Remark \ref{rem:nested}.

\new{Since in the current setting ${\rm osc}_\tria(\bb,c,f)=0$,} according to  \eqref{aposteriori} one has for any  $(u,w) \in \breve{\U}^\delta_\tria$,
\be
 \label{precise2}
\|R^\delta_{\Trs}(u,w;f)\|_{H(\bb;\tria_s)} \eqsim \|(u,w)-(\uex,\uex)\|_{\breve{\U}},
\ee
errors are thus  uniformly equivalent to sums of computable local quantities that suggest themselves as error indicators.

\begin{definition}
\label{def:effective}
 For $r \in \N$ and $\nu\in (0,1)$, we say that a strategy of marking $\cM \subset \tria$ 
is {\em $(r,\nu)$-effective}
when for $\tTr=\tTr(\tria,\cM,r)\in \T$, obtained from $\tria$
 by $r$
refinements of each $K' \in \cM$, and for $n>1$, of each $K' \in \tria$ with  $K' \cap \cup_{K'' \in \cM,\,t\geq0} K''+t \bb \neq \emptyset$,
it holds that 
\be
\label{reduced}
\|R^\delta_{\tTrs}(\tuPG,\twPG;f)\|_{H(\bb;\tilde{\tria}_s)} \le \nu \|R^\delta_{\Trs}(u_\tria^\delta,w_\tria^\delta;f)\|_{H(\bb;\tria_s)},
\ee
where $(\uPG,\wPG), (\tuPG,\twPG)$ are the Petrov-Galerkin solutions of \eqref{PGsol} from $\breve\U^\delta_\Tr, \breve\U^\delta_{\tTr}$,
respectively.
\end{definition}

Note that only for $n>1$ the refinement includes a {\em downstream enrichment} comprised of those cells that are intersected by rays
in direction $\bb$ emanating from cells in $\cM$.
\begin{remark}
\label{rem:coarse}
A repeated application, starting from some initial partition, of mark followed by the downwind enriched refinement strategy,
described in Definition \ref{def:effective},
ensures that no mesh can ever become coarser in the down-stream direction.
\end{remark}

Of course, by \eqref{precise2}, $(r,\nu)$-effectiveness translates \new{for some $\nu' \in (0,1)$} into error decay for the solutions
$$
\|(\uex,\uex)- (\tuPG,\twPG)\|_{ {\U}}\le \nu' \|(\uex,\uex)- (\uPG,\wPG)\|_{ {\U}},
$$
where now $\tTr$ is to be understood as the result of possibly several but uniformly bounded finite number of   refinements of the above type.

As indicated earlier, our goal is to prove effectiveness for a marking strategy based on a bulk-criterion.
To make this precise for some $\vartheta \in (0,1]$, $(u,w)\in \breve{\U}_\tria^\delta$, we let
\be
\label{marked2}
\begin{split}
&\cM=\cM((u,w),\vartheta) \subseteq \Tr \quad \text{be such that} \\
 &  \| R^\delta_{\tria_s({\mathcal M})}(u,w;f) \|_{H(\bb;\tria_s({\mathcal M}))} \ge \vartheta \| R^\delta_{\tria_s}(u,w;f) \|_{H(\bb;\tria_s)}.
 \end{split}
\ee

We are currently able to  fully establish effectivity of the standard bulk chasing strategy based on refining just cells in $\cM$ 
given by \eqref{marked2}, only
in the one-dimensional case. 

\begin{theorem}
\label{thm:red} 
We adopt the assumptions of Theorem \ref{th2new}  with the additional assumption $m_w\leq m_u+1$, and the specifications  \eqref{specified} of $\bb$ and $c$.  
Then, for $n=1$  and $\sigma$ sufficiently small
there exist $r \in\N$, $\nu=\nu(\vartheta) <1$ such that the marking strategy based on \eqref{marked2} 
is $(r,\nu)$-effective for $\tria \in \T$,  $f \in \F^\delta_\tria$.

 Under the forthcoming Conjecture~\ref{conjecture1}, the same result holds true for $n>1$ (thus with the downwind enriched refinement strategy).
\end{theorem}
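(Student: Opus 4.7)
The strategy is to reduce the question of residual-reduction to one about reducing a mesh-free least squares functional on the marked cells. The key enabler is that, in the present setting (constant $\bb$, piecewise constant $c$ on $\tria_\bot$, $f \in \F_\tria^\delta$), the data-oscillation term ${\rm osc}_\tria(\bb,c,f)$ of Theorem~\ref{th2new} vanishes identically, so Theorem~\ref{th2new} combined with Proposition~\ref{lemma1new} upgrades the a posteriori bound to a genuine two-sided equivalence
\[
\|R^\delta_{\Trs}(u,w;f)\|_{\V_{\Trs}} \eqsim \mathcal{E}(u,w,f) := \|w-u\|_{L_2(\Omega)}+\|\bb \cdot \nabla_\tria w + cw - f\|_{L_2(\Omega)},
\]
with the same equivalence holding on $\tTr$. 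Since $w$ is polynomial on each cell of $\tria$ and $\tTr$ merely refines $\tria$, the functional $\mathcal{E}$ is invariant under such refinement when evaluated on functions piecewise polynomial w.r.t.\ $\tria$, and is in this sense mesh-free. The bulk criterion~\eqref{marked2} then transcribes, up to fixed constants, into a bulk condition on the natural cellwise decomposition $\mathcal{E}^2 = \sum_{K} \mathcal{E}_{K}^2$.

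Next I invoke Petrov--Galerkin orthogonality (Remark~\ref{rem:PGorth}): $(\tuPG,\twPG)$ is the minimizer of $\|R^\delta_{\tTrs}(\cdot,\cdot;f)\|_{\V_{\tTrs}}$ over $\breve\U_{\tTr}^\delta$. It therefore suffices to exhibit one competitor $(\tilde u,\tilde w) \in \breve\U_{\tTr}^\delta$ whose $\mathcal{E}$-value is a fixed fraction smaller than $\mathcal{E}(\uPG,\wPG,f)$. I set $(\tilde u,\tilde w) = (\uPG,\wPG)$ on the untouched part of the mesh, and on the refined portion of each marked cell (together with its downstream enrichment when $n>1$) I construct a local correction by integrating the transport equation from the upstream trace of $\wPG$: $\tilde w$ is the piecewise polynomial of degree $m_w$ on $\tTr|_{K'}$ that agrees with $\wPG$ on the upstream boundary of the enriched region and best-approximates the local exact solution there, while $\tilde u$ is chosen close to $\tilde w$. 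The constraint $m_w \le m_u+1$ ensures that $\tilde w - \tilde u$ can be driven to the same order as $\bb\cdot \nabla_{\tTr} \tilde w + c \tilde w - f$, so that standard piecewise polynomial approximation on a subgrid of fixed depth $r$ yields a uniform local reduction factor $\nu_0 = \nu_0(r) < 1$:
\[
\mathcal{E}_{K'}(\tilde u, \tilde w, f)^2 \le \nu_0^2 \, \mathcal{E}_{K'}(\uPG, \wPG, f)^2 \qquad (K' \in \cM).
\]

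Combining this local reduction with the bulk criterion gives
\[
\mathcal{E}(\tilde u,\tilde w,f)^2 \le \mathcal{E}(\uPG,\wPG,f)^2 - (1-\nu_0^2)\!\!\sum_{K' \in \cM}\!\! \mathcal{E}_{K'}(\uPG,\wPG,f)^2 \le \big(1 - C(1-\nu_0^2)\vartheta^2\big)\, \mathcal{E}(\uPG,\wPG,f)^2,
\]
which, translated through the two norm equivalences on $\tria$ and $\tTr$, delivers the claimed inequality~\eqref{reduced} with a strict $\nu = \nu(\vartheta) < 1$ depending only on $\vartheta$, $r$, and the fixed equivalence constants.

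The hard part is the local construction step and, in particular, the compatibility of the local approximation with the continuity of $\tilde w$ across the skeleton. For $n=1$ a chain of consecutive marked cells has only two skeleton values to respect: the upstream value is matched automatically by integrating the ODE, while any downstream mismatch with the exact solution can be confined to a single innermost refined subcell and absorbed into $\tilde w - \tilde u$ using the slack afforded by $m_w \le m_u+1$; hence downstream enrichment is unnecessary. For $n>1$ the upstream data lives on a whole inflow face and the downstream mismatch propagates along an entire characteristic shadow that generically overlaps subsequent unrefined cells; it is precisely the need to contain this propagation within the refined region that forces the downstream enrichment in Definition~\ref{def:effective}. Verifying that the enriched region does admit a local construction yielding a uniform reduction factor $\nu_0 < 1$ is the content of the forthcoming Conjecture~\ref{conjecture1}, and granting it, the argument above goes through verbatim.
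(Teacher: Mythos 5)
Your reduction to the mesh-independent functional $\mathcal{E}$ via the vanishing oscillation term and your invocation of Petrov--Galerkin orthogonality are both in the spirit of the paper (cf. Proposition~\ref{proppie} and Remark~\ref{rem:PGorth}), and the passage to the least-squares quantity $\eta_\Omega$ matches the paper's route through Propositions~\ref{solsclose} and \ref{rem:equiv}. However, the central step of your argument --- constructing a \emph{local} competitor on each marked cell $K'$ with a uniform reduction factor $\nu_0<1$ --- is exactly the claim that the paper is at pains to show cannot in general be made. On a cell where $G|_{K'}$ (the streamline average of $g=\partial_\bb\bar w^\delta_\tria+c\bar u^\delta_\tria-f$) is a large fraction of $g|_{K'}$, i.e.\ $\alpha_{K'}$ close to $1$, \emph{no} correction $w$ supported in $K'$ can reduce $\|g-\partial_\bb w\|_{L_2(K')}$; this is already visible in $1$D with $c=0$ when $g$ is constant on $K'$, so that $G=g$ and integrating the ODE from the inflow trace produces a correction $z_g\equiv 0$. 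Likewise the $u$-correction helps only on Type-(I) cells, those with $\|e+cg\|_{L_2(K')}$ a definite fraction of $\eta_{K'}$. The observation that a fixed cell-wise contraction factor is unavailable is precisely why the paper splits $\bar\cM$ into Type-(I) and Type-(II) (Section~\ref{ssec:5.4}), handles Type-(I) by the local $u$-correction of Lemma~\ref{lem:aux}, and for Type-(II) must resort to a \emph{global} argument: using the least-squares orthogonality relations \eqref{ortheg}--\eqref{ortheg2} of the minimizer, it builds a global function $z$ solving $z'=-cz+F$ on $(0,1)$ and exploits approximability of $z$ in $\breve\U_\tria^\delta$ to show that $\alpha_{K'}$ is bounded away from $1$ on a set $\bar{\bar\cM}\subset\bar\cM_{\rm II}$ carrying a fixed fraction of the indicator mass (Lemma~\ref{lem13}). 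The hypothesis $m_w\le m_u+1$ enters there, to make $e\perp \cP_{m_u}(K')$ strong enough to control $\int_{K'} ew\,dx$, not to ``drive $\tilde w-\tilde u$ to the same order'' as you suggest. Finally, your proposal to confine the downstream mismatch to an innermost refined subcell does not repair the local argument: the resulting $\partial_\bb\tilde w$ scales like the jump divided by the subcell size and its $L_2$ contribution blows up as $r\to\infty$; choosing $\tilde u$ to match the blow-up only transfers the problem into $\|c\tilde u\|$ or $\|\tilde w-\tilde u\|$. Also, Conjecture~\ref{conjecture1} is not a statement about a local cell-wise reduction: it is a global approximation property of a right-hand side $F$ built from the streamline averages, used inside the perturbed global orthogonality argument for $n>1$. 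In short, the overall scaffolding you propose is sound, but the heart of the proof --- the global orthogonality argument that saves the Type-(II) cells --- is missing, and without it the cell-wise contraction you posit is simply false.
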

 
 The remainder of this section is to develop the conceptual ingredients entering results of the above type.
 
 A first natural ingredient for proving Theorem~\ref{thm:red} seems to be Petrov-Galerkin orthogonality \eqref{PGorth}
\begin{align}
\label{ds1}
\|R^\delta_{\tTrs}({u^\delta_{\tilde{\tria}},w^\delta_{\tilde{\tria}};}f)\|_{H(\bb;\tTrs)}^2 &= \|R^\delta_{\tTrs}({u^\delta_{\tria},w^\delta_{\tria};}f)\|_{H(\bb;\tTrs)}^2 \nonumber\\
& \quad - \|t^\delta_{\tTrs}({u_\tria^\delta-u_{\tilde{\tria}}^\delta},{w^\delta_\tria-w^\delta_{\tilde{\tria}}})\|^2_{H(\bb;\tTrs)}.
\end{align}
in combination with a proof of 
$$
\|t^\delta_{\tTrs}({u_\tria^\delta-u_{\tilde{\tria}}^\delta},{w^\delta_\tria-w^\delta_{\tilde{\tria}}})\|_{H(\bb;\tTrs)} \gtrsim \|R^\delta_{\tTrs}({u^\delta_{\tria},w^\delta_{\tria};}f)\|_{H(\bb;\tTrs)}.
$$
A complication, however, is the presence of the ``wrong''  mesh-dependent lifting $R^\delta_{\tTrs}({u^\delta_{\tria},w^\delta_{\tria};}f)$
instead of $R^\delta_{\tria_s}({u^\delta_{\tria},w^\delta_{\tria};}f)$
 in the first term on the right hand side of \eqref{ds1}. To tackle this problem, in the next subsection we construct mesh-independent error indicators. (The appearance of the ``wrong'' norm $\|\cdot \|_{H(\bb;\tTrs)}$ instead of $\| \cdot \|_{H(\bb;\Trs)}$ does not cause any problems
because $\|R^\delta_{\Trs}({u^\delta_{\tria},w^\delta_{\tria};}f)\|_{H(\bb;\tTrs)} = \|R^\delta_{\Trs}({u^\delta_{\tria},w^\delta_{\tria};}f)\|_{H(\bb;\Trs)}$.)

\subsection{A mesh-independent error indicator and related least squares problems}
In the light of the remarks at the end of the previous subsection 
we quantify next the interrelation of various equivalent  error indicators arising from different liftings as well as from different
equivalent inner products. A pivotal role is 
played by the following ``domain-additive'' quantity.
For any subdomain $\Omega' \subseteq \Omega$ we introduce
\be
\label{eta}
\eta^2_{\Omega'} (u,w;f) := \|u-w\|_{L_2(\Omega')}^2 + \|\partial_\bb w +c u- f\|_{L_2(\Omega')}^2.
\ee
Accordingly, for a collection ${\mathcal O}$ of subdomains, we define 
$$
\eta^2_{{\mathcal O}} (u,w;f)=\sum_{\Omega' \in {\mathcal O}} \eta^2_{\Omega'} (u,w;f) = \eta^2_{\bigcup\{\Omega'\in {\mathcal O}\}}(u,w;f).
$$
Note that for $f\in L_2(\Omega)$ and $\Omega'=\Omega$ both components $(u,w)$ of the minimizer of \eqref{eta} over $L_2(\Omega)\times H(\bb;\Omega)$ agree with the
minimizer $w\in H(\bb;\Omega)$ of the {\em least squares} functional
$$
\|\cB w-f\|_{L_2(\Omega)}^2 = \|\partial_\bb w +c w- f\|_{L_2(\Omega')}^2,
$$
see the comment in Section \ref{ssec:5.4} below.

As indicated above it will be crucial to relate these mesh-independent quantities to the following quantities each of which being useful for 
different purposes: 
 {Besides the projected lifted residual from \eqref{s4}, recall first the definitions of the {\em lifted residual}
\begin{align*}
R_{\tria_s}&=R_{\tria_s}(u,w;f)=(R_K)_{K \in \tria_s} \in \V_{\tria_s}=H(\bb;\tria_s),
\intertext{determined by $\langle R_{\tria_s},v\rangle_{H(\bb,\tria_s)}=b_{\tria_s}(u,w|_{\partial\tria_s};v)-\int_\Omega f v\,d{\bf x}$ ($v \in \V_{\tria_s}$).
In a similar spirit as in the analysis of test functions we need to make use of the {\em lifted modified residual}
(cf. \eqref{locallift})}
\brR_{\tria_s}&=\brR_{\tria_s}(u,w;f)=(\brR_K)_{K \in \tria_s} \in \V_{\tria_s},
\intertext{and the piecewise polynomial {\em approximate lifted modified residual} (cf. \eqref{near-opt})}
\brbrR_{\tria_s}&=\brbrR_{\tria_s}(u,w;f)=(\brbrR_K)_{K \in \tria_s} \in \bar{\V}^\delta_{\tria_s}.
\end{align*}
In the current setting of $\bb$ being a constant, and so $d_K\equiv 0$ and $b_{\tria_s}=\breve{b}_{\tria_s}$, the lifted residual and the lifted modified residual differ only in the sense that
$\brR_K$ is the lift of the local residual w.r.t. to the alternative inner product $\sll\cdot,\cdot\srr_{H(\bb;K)}$
on $H(\bb;K)$.

The advantage of the latter quantity is its simple explicit analytic expression from which one can actually see the connection with \eqref{eta}
as the "limit case" with respect to increasing subgrid depth.
In fact, for $K' \in \tria$ we will show that $\|\brbrR_{\tria_s(K')}\|_{H(\bb;\tria_s(K'))}^2 \rightarrow \eta^2_{K'} (u,w;f)$ for the subgrid-depth $1/\sigma$ of the test-search spaces tending to $\infty$.
Since $\brbrR_{\tria_s}$ is constructed as a piecewise polynomial approximation for $\brR_{\tria_s}$, we also have that $\|\brbrR_{\tria_s(K')}\|_{H(\bb;\tria_s(K'))}^2 \rightarrow \|\brR_{\tria_s(K')}\|_{H(\bb;\tria_s(K'))}^2$ when $1/\sigma \rightarrow \infty$. As we will see, the norms $\|\cdot\|_{H(\bb;K)}$ and $\tvert \cdot\tvert_{H(\bb;K)}$ on $H(\bb;K)$ are not only equivalent but even converge to each other when $1/\sigma \rightarrow \infty$, which will yield $\|\brR_{\tria_s(K')}\|_{H(\bb;\tria_s(K'))}^2 \rightarrow \|R_{\tria_s(K')}\|_{H(\bb;\tria_s(K'))}^2$ when $1/\sigma \rightarrow \infty$. Finally, since $R_K^\delta$ is the best approximation to $R_K$ from $\cP_{m_v}(K)$, we have that $\|R_K-R_K^\delta\|_{H(\bb;K)} \leq \|R_K-\brbrR_K\|_{H(\bb;K)} \rightarrow 0$ when $1/\sigma \rightarrow \infty$.} The details of this roadmap are as follows:

\begin{proposition} \label{norms_are_close}For $v \in H(\bb;K)$, we have
$$
\big| \|v\|_{H(\bb;K)}^2- \tvert v\tvert_{H(\bb;K)}^2 \big| \leq |\bb|^{-1} \diam(K) \|v\|_{H(\bb;K)}^2.
$$
\end{proposition}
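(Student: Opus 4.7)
The plan rests on the observation that throughout Section~\ref{sec:red} the convection field $\bb$ is constant, so $\breve{\bb}_K=\bb$ on every $K\in\tria_s$. Comparing the definitions of $\|\cdot\|_{H(\bb;K)}$ and of the equivalent norm $\tvert\cdot\tvert_{H(\bb;K)}$ induced by the scalar product \eqref{sscalar}, the streamline-derivative contributions $\|\partial_\bb v\|_{L_2(K)}^2$ cancel and one is left with
$$
\|v\|_{H(\bb;K)}^2 - \tvert v\tvert_{H(\bb;K)}^2 = \|v\|_{L_2(K)}^2 - \int_{\partial K_{\!-}} v(\bs)^2 \big|({\textstyle\frac{\bb}{|\bb|}}\cdot \bn_K)(\bs)\big|\, r(\bs)\,d\bs .
$$
The task thus reduces to controlling the difference between the $L_2$-mass of $v$ on $K$ and a weighted boundary integral of $v^2$ on $\partial K_{\!-}$.

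Next, I would introduce Cartesian coordinates $(x,\by)$ as in Section~\ref{Sapproxliftres}, with the first basis vector equal to $\bb/|\bb|$, so that $\partial_\bb=|\bb|\partial_x$. For $\bx=(x,\by)\in K$ the streamline through $\bx$ hits $\partial K_{\!\pm}$ at $(x_{\!\pm}(\by),\by)$, and for $\bs=(x_{\!-}(\by),\by)\in\partial K_{\!-}$ one has $r(\bs)=x_{\!+}(\by)-x_{\!-}(\by)$. The surface-to-volume change of variables $d\bs=|({\textstyle\frac{\bb}{|\bb|}}\cdot \bn_K)|^{-1}d\by$ turns the boundary integral into $\int_{\pi(K)}v(x_{\!-}(\by),\by)^2(x_{\!+}(\by)-x_{\!-}(\by))\,d\by$, where $\pi(K)$ denotes the projection of $K$ onto the $\by$-hyperplane.

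The key identity is then the product-rule observation
$$
\tfrac{d}{dx}\big[(x-x_{\!+}(\by))v(x,\by)^2\big] = v(x,\by)^2 + 2(x-x_{\!+}(\by))v(x,\by)\,\partial_x v(x,\by).
$$
Integrating this from $x_{\!-}(\by)$ to $x_{\!+}(\by)$ and then over $\by\in\pi(K)$, the boundary contribution at $x_{\!+}$ vanishes and the one at $x_{\!-}$ reproduces exactly the weighted boundary integral computed above, yielding
$$
\|v\|_{L_2(K)}^2 - \int_{\partial K_{\!-}} v^2\,\big|{\textstyle\frac{\bb}{|\bb|}}\cdot \bn_K\big|\,r\,d\bs = -2\int_K (x-x_{\!+}(\by))\,v\,\partial_x v\,d\bx.
$$

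The final step is to bound the right-hand side by noting $|x-x_{\!+}(\by)|\leq\diam(K)$, applying Cauchy--Schwarz, and using $\partial_x v=|\bb|^{-1}\partial_\bb v$, which gives
$$
\Big|2\int_K (x-x_{\!+}(\by))\,v\,\partial_x v\,d\bx\Big| \leq 2|\bb|^{-1}\diam(K)\,\|v\|_{L_2(K)}\|\partial_\bb v\|_{L_2(K)},
$$
and then invoking $2ab\leq a^2+b^2$ delivers the stated bound by $|\bb|^{-1}\diam(K)\|v\|_{H(\bb;K)}^2$. The only subtlety is that the product-rule identity is valid pointwise for $v\in C^1(\bar K)$ only, so a standard density argument in $H(\bb;K)$ (straightforward since $\bb$ is constant) is needed to extend to general $v$. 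I do not expect a substantial obstacle; the main care to exercise is the surface-measure identification on $\partial K_{\!-}$ when the inflow faces are not orthogonal to $\bb$.
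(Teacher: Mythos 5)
Your proof is correct and takes essentially the same route as the paper: after aligning coordinates with $\bb$, you express $\|v\|_{L_2(K)}^2$ minus the weighted inflow-boundary term as an integral of $v\,\partial_\bb v$ along streamlines (via the product rule with the weight $x-x_{\!+}(\by)$, which after Fubini is the same identity the paper writes as $hz(0)-\int_0^h z=-\int_0^h\!\int_0^x z'$), and then apply Cauchy--Schwarz and Young. The surface-measure identification and the final estimate coincide with the paper's.
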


\begin{proof} W.l.o.g. we consider the case that $\bb/|\bb|=\vec{e}_1$. Given $\vec{x} \in K$, let ${\bf s}$ (${\bf t}$) denote the projection of $\vec{x}$ on $\partial K_{\!-}$ ($\partial K_{\!+}$) along the $x_1$-direction.
Applying $|h z(0)-\int_0^h z(x)\,dx|=|-\int_0^h \int_0^x z'(y)\,dy\,\new{dx}| \leq h\int_0^h |z'(y)|\,dy$, we find that
$$
\big| r({\bf s}) v({\bf s})^2-\int_{s_1}^{t_1} v({\bf x})^2 \,d x_1\big| \leq \frac{2 r({\bf s})}{|\bb|} \int_{s_1}^{t_1} |v({\bf x}) \partial_{\bb} v({\bf x})| d x_1.
$$
Integrating this estimate over $x_2,\ldots,x_n$, using that $d{\bf s}=\frac{|\bb|}{|\bb \cdot {\bf n}_K(s)|}d x_2\ldots d x_n$, and finally applying Cauchy-Schwartz'  inequalities confirms the claim.
\end{proof}

As a consequence, lifted and modified lifted residuals become closer with increasing subgrid depth.
\begin{corollary} \label{corol} For $((u,w),f) \in \breve{\U}_\tria^\delta \times \F_\tria^\delta$ and $K \in \tria_s$, we have
$$
\|R_K-\brR_K\|_{H(\bb;K)} \lesssim |\bb|^{-\frac{1}{2}} \diam(K)^{\frac{1}{2}} \|\brR_K\|_{H(\bb;K)}.
$$
\end{corollary}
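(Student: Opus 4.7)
The plan is to exploit that $R_K$ and $\brR_K$ are Riesz lifts of \emph{the same} residual functional $\phi_K(v) := b_K(u,w|_{\partial K};v) - \int_K fv\,d\bx$ on $H(\bb;K)$, but with respect to the two different inner products $\langle \cdot, \cdot \rangle_{H(\bb;K)}$ and $\sll \cdot, \cdot \srr_{H(\bb;K)}$, respectively (this identification uses that in the current setting $\bb$ is constant, so $\breve b_K = b_K$, and $f$ is already piecewise polynomial). This immediately gives, for every $v \in H(\bb;K)$,
$$\langle R_K - \brR_K, v\rangle_{H(\bb;K)} \;=\; \phi_K(v) - \langle \brR_K, v\rangle_{H(\bb;K)} \;=\; \sll \brR_K, v \srr_{H(\bb;K)} - \langle \brR_K, v\rangle_{H(\bb;K)}.$$
Testing with $v := R_K - \brR_K$ reduces the task to estimating the difference of the two inner products at the pair $(\brR_K, R_K - \brR_K)$.

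To control that difference, set $D := \sll \cdot,\cdot\srr_{H(\bb;K)} - \langle \cdot,\cdot\rangle_{H(\bb;K)}$, a symmetric bilinear form on $H(\bb;K)$. Proposition~\ref{norms_are_close} provides precisely the diagonal estimate $|D(w,w)| \leq \varepsilon \|w\|_{H(\bb;K)}^2$ with $\varepsilon := |\bb|^{-1}\diam(K)$. Combining the polarization identity $D(u,v) = \tfrac{1}{4}\bigl(D(u+v,u+v) - D(u-v,u-v)\bigr)$ with the parallelogram law then yields the crude but sufficient off-diagonal bound
$$|D(u,v)| \;\leq\; \tfrac{\varepsilon}{2}\bigl(\|u\|_{H(\bb;K)}^2 + \|v\|_{H(\bb;K)}^2\bigr),$$
which, applied with $u = \brR_K$ and $v = R_K - \brR_K$, gives
$$\|R_K - \brR_K\|_{H(\bb;K)}^2 \;\leq\; \tfrac{\varepsilon}{2}\bigl(\|\brR_K\|_{H(\bb;K)}^2 + \|R_K - \brR_K\|_{H(\bb;K)}^2\bigr).$$

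Finally I would absorb the second term on the right into the left-hand side, which is valid in the regime $\varepsilon \leq 1$ (consistent with the subgrid assumption $\diam(K) \leq |\bb|$ already imposed via Proposition~\ref{equivalence}), obtaining $\|R_K - \brR_K\|^2 \leq \tfrac{\varepsilon}{2-\varepsilon}\|\brR_K\|^2 \leq \varepsilon \|\brR_K\|^2$ and, after taking square roots, the stated bound. In the complementary regime $\varepsilon \gtrsim 1$ the claim is automatic: both $R_K$ and $\brR_K$ are Riesz lifts of the same functional with respect to uniformly equivalent inner products, so $\|R_K\|_{H(\bb;K)} \eqsim \|\brR_K\|_{H(\bb;K)}$ by Proposition~\ref{equivalence}, whence $\|R_K - \brR_K\|_{H(\bb;K)} \lesssim \|\brR_K\|_{H(\bb;K)} \lesssim \sqrt{\varepsilon}\|\brR_K\|_{H(\bb;K)}$. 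I do not anticipate any genuine obstacle here; the polarization-plus-parallelogram step is the only honest calculation and the rest is bookkeeping.
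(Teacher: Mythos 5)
Your proof is correct, and it rests on the same two ingredients as the paper's argument: the observation that $R_K$ and $\brR_K$ are Riesz representatives of the same functional $\phi_K$ with respect to the two inner products (so that $\langle R_K, v\rangle = \sll\brR_K, v\srr$ for all $v$), and the quantitative closeness of the two norms from Proposition~\ref{norms_are_close}. The difference is in how you finish. You bound the off-diagonal action of the difference form $D = \sll\cdot,\cdot\srr - \langle\cdot,\cdot\rangle$ at the pair $(\brR_K, R_K-\brR_K)$ by polarization and the parallelogram law, getting $|D(u,v)| \leq \tfrac{\varepsilon}{2}(\|u\|^2+\|v\|^2)$ from the purely diagonal estimate of Proposition~\ref{norms_are_close}, and then absorb the $\|R_K-\brR_K\|^2$ term into the left-hand side. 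The paper instead writes $\|R_K-\brR_K\|^2 = \langle R_K, R_K-\brR_K\rangle - \langle\brR_K, R_K-\brR_K\rangle$, recognizes the first pairing as $\|R_K\|^2 - \tvert\brR_K\tvert^2$ and the second as $\tvert\brR_K\tvert^2 - \|\brR_K\|^2$, and controls the former via the sup-representations $\|R_K\|^2 = \sup_v \phi_K(v)^2/\|v\|^2$ and $\tvert\brR_K\tvert^2 = \sup_v \phi_K(v)^2/\tvert v\tvert^2$, which lets one bound their ratio by $1\pm\tau$. Your route is somewhat more self-contained and is the standard device for comparing Riesz lifts under a symmetric perturbation of the inner product; the paper's route is specific to Riesz lifts but avoids the polarization step. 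One cosmetic remark: the fallback case $\varepsilon \geq 1$ that you included never arises, since Proposition~\ref{equivalence} (and the standing subgrid condition $\diam(K)\leq|\bb|$) is exactly what guarantees $\varepsilon\leq 1$; and in that complementary regime Proposition~\ref{equivalence} would not be available to furnish the norm equivalence anyway, so it is best simply to note that the assumption $\diam(K)\leq|\bb|$ is in force and drop that branch.
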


\begin{proof}
Inside this proof we drop the subscript $H(\bb;K)$ from the norms and inner products.
Note that for any $v \in H(\bb;K)$, it holds by definition that $\sll \brR_K,v\srr=\langle R_K,v\rangle$.

With $\tau:=\sup_{0 \neq v \in H(\bb;K)} \Big|\frac{\tvert v \tvert^2}{\|v\|^2}-1\Big|$ ($\lesssim |\bb|^{-1} \diam(K)$), we find that
\be \label{s6}
\big|\langle R_K,R_K-\brR_K\rangle\big|=\big|\|R_K\|^2-\tvert \brR_K \tvert^2\big| \leq \tau \|R_K\|^2.
\ee
From
$$
\|R_K\|^2=\sup_{0 \neq v \in H(\bb;K)} \frac{\langle R_K, v\rangle^2}{\|v\|^2}=\sup_{0 \neq v \in H(\bb;K)} \frac{\sll \brR_K, v\srr^2}{\|v\|^2}=
\sup_{0 \neq v \in H(\bb;K)} \frac{\sll \brR_K, v\srr^2}{\tvert v \tvert^2}\frac{\tvert v \tvert^2}{\|v\|^2},
$$
and
$$
\sup_{0 \neq v \in H(\bb;K)} \frac{\sll \brR_K, v\srr^2}{\tvert v \tvert^2}=\tvert \brR_K \tvert^2, \quad\frac{\tvert v \tvert^2}{\|v\|^2}\in [1-{\tau},1+{\tau}],
$$
we infer that
$$
\big| \|R_K\|^2-\tvert \brR_K\tvert^2\big| \leq {\tau} \tvert \brR_K\tvert^2.
$$
Now from $\langle \brR_K,R_K-\brR_K\rangle=\tvert \brR_K\tvert^2- \|\brR_K\|^2$ and \eqref{s6}, we arrive at
$$
\|R_K-\brR_K\|^2 \leq \tau (\tvert \brR_K\tvert^2+\|R_K\|^2)\leq \tau(2+\tau)\tvert \brR_K\tvert^2,
$$
which gives the result.
\end{proof}

Corollary~\ref{corol} is one of the ingredients to prove the mutual closeness of the various error indicators.

\begin{proposition} 
\label{proppie} For $(u,w)\in {{\breve{\U}}^\delta_\tria}$, $f \in \F_\tria^\delta$, $K'\in \tria$, we have
\be \label{s9}
\begin{split}
\sum_{K \in \tria_s(K')} \|R^\delta_{K}- (\partial_{\bbf}w+ \cc u - f)\|_{L_2(K)}^2&+\|\partial_\bb R^\delta_K- (w-u)\|_{L_2(K)} ^2 \\
&\lesssim \sigma^2 \eta^2_{K'}(u,w;f),
\end{split}
\ee
 and
 $$
\big| \| R^\delta_{\tria_s(K')}(u,w;f) \|_{H(\bb;\tria_s(K'))}^2 - \eta^2_{K'}(u,w;f) \big|
\lesssim \sigma\, \eta^2_{K'}(u,w;f),
$$
only dependent on the involved polynomial degrees, and on (upper bounds for) $|\bb|^{-1}$, $\|c\|_{L_\infty(K')}$ and $\varrho$.
\end{proposition}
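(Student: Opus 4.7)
My approach uses the approximate lifted residual $\brbrR_K$ from \eqref{near-opt} as a bridge between $R^\delta_K$ and $\eta_{K'}$. In the present setting, where $\bb$ is constant and $c$ piecewise constant on $\tria$, one has $\tilde{\bb}=\breve{\bb}_K=\bb$, $\tilde{c}=c$, $\breve{d}_K=0$, so with $\mu=w-u$ and $\gamma=\lambda-c\mu=\partial_\bb w+cu-f$ the formula \eqref{near-opt} collapses to
$$
\brbrR_K(x,\by)=|\bb|^{-1}\mu(\bar{x}_{\!-}(\by),\by)(x-\bar{x}_{\!-}(\by))+(\lambda-c\mu)(\bar{x}_{\!-}(\by),\by),\qquad \partial_\bb\brbrR_K=\mu(\bar{x}_{\!-}(\by),\by).
$$
Thus $\brbrR_K$ is literally the upwind-frozen realization of the target pair $(\partial_\bb w+cu-f,\,w-u)$, and the whole task is to quantify how close $R^\delta_K$ is to $\brbrR_K$, and $\brbrR_K$ to that target.

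The first step is a direct calculation establishing
$$
\sum_{K\in\tria_s(K')}\|\brbrR_K-(\lambda-c\mu)\|_{L_2(K)}^2+\|\partial_\bb\brbrR_K-\mu\|_{L_2(K)}^2\lesssim\sigma^2\eta_{K'}^2.
$$
I would write the polynomial differences $\mu(\bar{x}_{\!-},\by)-\mu(x,\by)$ and $(\lambda-c\mu)(\bar{x}_{\!-},\by)-(\lambda-c\mu)(x,\by)$ as streamline integrals of $\partial_\bb\mu$ and $\partial_\bb(\lambda-c\mu)$, and estimate the monomial contribution $|\bb|^{-1}\mu(\bar{x}_{\!-},\by)(x-\bar{x}_{\!-})$ in $L_2(K)$ by $|\bb|^{-1}\diam(K)\|\mu\|_{L_2(\bar{K})}$ after a standard polynomial-trace step. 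Since $\mu$ and $\lambda-c\mu$ are polynomials of bounded degree on the coarse cell $K'\supset K$, the inverse inequality on a shape-regular enlargement of $K'$ replaces $\|\partial_\bb(\cdot)\|_{L_2(\bar{K})}$ by $|\bb|\diam(K')^{-1}\|\cdot\|_{L_2(K')}$. Summation over $K\subset K'$, together with bounded overlap of the $\{\bar{K}\}$, then recruits $(\diam(K)/\diam(K'))^2\le\sigma^2$ on the polynomial-difference terms and $|\bb|^{-2}\diam(K)^2\le|\bb|^{-2}\sigma^2\diam(K')^2\lesssim\sigma^2$ on the streamline term, which proves the claim.

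The second step passes from $\brbrR_K$ to $R^\delta_K$ through the chain $R^\delta_K\leftarrow R_K\leftarrow\brR_K\leftarrow\brbrR_K$. Because $R^\delta_K$ is the $H(\bb;K)$-orthogonal projection of $R_K$ onto $\bar{\V}^\delta_{\tria_s}$ and $\brbrR_K\in\bar{\V}^\delta_{\tria_s}$, we already have $\|R^\delta_K-\brbrR_K\|_{H(\bb;K)}\le\|R_K-\brbrR_K\|_{H(\bb;K)}$. Corollary~\ref{corol} bounds the first link by $(|\bb|^{-1}\diam(K))^{1/2}\|\brR_K\|_{H(\bb;K)}$, while Lemma~\ref{lem3}, specialized to $\breve{d}_K=0$, $\breve{\bb}=\bb$, bounds the second by $|\bb|^{-1}\diam(K)(\|\mu\|_{H(\bb;\bar{K})}+\|\lambda\|_{H(\bb;\bar{K})})+\diam(K)\|\mu\|_{H^1(K)}$. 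Applying inverse inequalities on $\mu$ and $\lambda$ once more, and invoking the rough a priori bound $\|\brR\|_{H(\bb;\tria_s(K'))}^2\lesssim\eta_{K'}^2$ (which is itself a by-product of Step~1 via $\|\brbrR\|\lesssim\eta_{K'}$ plus the $\|\brR-\brbrR\|$ estimate just obtained), one arrives at $\sum_K\|R^\delta_K-\brbrR_K\|_{H(\bb;K)}^2\lesssim\sigma^2\eta_{K'}^2$. A triangle inequality combined with $\max(\|v\|_{L_2(K)},\|\partial_\bb v\|_{L_2(K)})\le\|v\|_{H(\bb;K)}$ delivers \eqref{s9}. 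The second inequality then follows by applying $|a^2-b^2|\le|a-b|(|a|+|b|)$ cell by cell, together with Cauchy--Schwarz and $\|R^\delta_{\tria_s(K')}\|_{H(\bb;\tria_s(K'))}\lesssim\eta_{K'}$.

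The main technical obstacle will be securing the sharp $\sigma^2$ in \eqref{s9} rather than a mere $\sigma$: after squaring, Corollary~\ref{corol} alone contributes only one factor $|\bb|^{-1}\diam(K)$, and it is the \emph{simultaneous} use of $\diam(K)/\diam(K')\le\sigma$ and $\diam(K')\le\sigma$ from \eqref{26} that produces the two powers of $\sigma$. A secondary nuisance is that the extended polyhedron $\bar{K}$ need not be contained in $K'$, so the inverse inequalities for the piecewise polynomials $\mu$ and $\lambda-c\mu$ have to be applied on a shape-regular enlargement of $K'$; bounded polynomial degrees and shape regularity make this harmless but require bookkeeping.
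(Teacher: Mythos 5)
Your proposal follows the same chain $R^\delta_K \leftarrow R_K \leftarrow \brR_K \leftarrow \brbrR_K$ used in the paper, invoking the orthogonal-projection property of $R^\delta_K$, Corollary~\ref{corol} for $\|R_K-\brR_K\|$, Lemma~\ref{lem3} for $\|\brR_K-\brbrR_K\|$, a direct estimate of $\brbrR_K$ against the target pair via streamline Poincar\'e, and inverse inequalities on the coarse cell to obtain the two powers of $\sigma$; the final reduction of the second inequality to \eqref{s9} by $|a^2-b^2|\le|a-b|(|a|+|b|)$ plus Cauchy--Schwarz also matches the paper's vector argument. This is essentially the paper's proof, and your identification of where the two factors of $\sigma$ come from (one from $\diam(K)/\diam(K')\le\sigma$, one from $\diam(K')\le\sigma$) is exactly the crucial bookkeeping step the paper performs.
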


\begin{proof} Applications of the triangle-inequality show that
\be \label{s1}
\begin{split}
&\sum_{K \in \tria_s(K')} \|R^\delta_{K}- (\partial_{\bbf}w+ \cc u - f)\|_{L_2(K)}^2+\|\partial_\bb R^\delta_K- (w-u)\|_{L_2(K)} ^2 \\
&\leq 2 \|R^\delta_{\tria_s(K')}-\brbrR_{\tria_s(K')}\|_{H(\bb;\tria_s(K'))}^2\\
&\quad+
2 \sum_{K \in \tria_s(K')} \|\brbrR_{K}- (\partial_{\bbf}w+ \cc u - f)\|_{L_2(K)}^2+\|\partial_\bb \brbrR_K- (w-u)\|_{L_2(K)} ^2.
\end{split}
\ee

We estimate now the terms on the righthand side of  \eqref{s1}.
For each $K \in \tria_s(K')$, from $\brbrR_K \in \cP_{m_v}(K)$ and $R_K^\delta$ being the $H(\bb,K)$-orthogonal projection of $R_K$ onto $\cP_{m_v}(K)$,
 we have
\begin{equation*}
\begin{split}
\|R^\delta_K-\brbrR_K\|_{H(\bb;K)} & \leq \|R^\delta_K-R_K\|_{H(\bb;K)}+\|R_K-\brbrR_K\|_{H(\bb;K)} \\
& \leq 2 \|R_K-\brbrR_K\|_{H(\bb;K)} \\
& \leq
2 \|R_K-\brR_K\|_{H(\bb;K)}+2\|\brR_K-\brbrR_K\|_{H(\bb;K)},
\end{split}
\end{equation*}
which yields
\be \label{s7}
\begin{split}
\|R^\delta_{\tria_s(K')}-\brbrR_{\tria_s(K')}\|_{H(\bb;\tria_s(K'))}^2 \leq &
8 \|R_{\tria_s(K')}-\brR_{\tria_s(K')}\|_{H(\bb;\tria_s(K'))}^2\\
&+
8 \|\brR_{\tria_s(K')}-\brbrR_{\tria_s(K')}\|_{H(\bb;\tria_s(K')}^2.
\end{split}
\ee

Using that for $K \in \tria_s(K')$, $\diam(K) \leq \sigma \diam(K') \leq \sigma^2$,
an application of Corollary~\ref{corol} shows that
\be \label{s8}
\|R_{\tria_s(K')}-\brR_{\tria_s(K')}\|_{H(\bb;\tria_s(K'))}^2 \lesssim |\bb|^{-1} \sigma^2 \|\brR_{\tria_s(K')}\|_{H(\bb;\tria_s(K'))}^2.
\ee
Lemma~\ref{lem3} shows that for $K \in \tria_s(K')$,
$$
\|\brR_K-\brbrR_K\|_{H(\bb;K)} \lesssim \diam(K)(\|u-w\|_{H^1(K)}+\|\partial_\bb w +c w- f\|_{H^1(K)})
$$
dependent on (upper bounds for) $\varrho$, $|\bb|^{-1}$, and $\|c\|_{L_\infty(K')}$.
Squaring, summing over $K \subset K'$, and using inverse inequalities yields
\be \label{s3}
\|\brR_{\tria_s(K')}-\brbrR_{\tria_s(K')}\|^2_{H(\bb;{\tria_s(K')})} \lesssim \sigma^2  \eta^2_{K'}(u,w;f).
\ee

It remains to estimate the terms in the sum in the right hand side of \eqref{s1}.
For each $K \in \tria_s(K')$, we have
\begin{align*}
\|\brbrR_{K}- &(\partial_{\bbf}w+ \cc u - f)\|_{L_2(K)}\\
\lesssim &|\bb|^{-1} \diam(K)\big\{
\|w-u\|_{L_2(K)} + |\bb|^{-1}\diam(K)\|\partial_\bb(w-u)\|_{L_2(K)}\big\}\\
&+ |\bb|^{-1} \diam(K) \|\partial_\bb (\partial_{\bb}w+  c u - f)\|_{L_2(K)},
\end{align*}
by applications of Poincar\'{e}'s inequality in the streamline direction (cf. the second paragraph in the proof of Lemma~\ref{lem4}).
Similarly
\begin{align*}
\|\partial_\bb \brbrR_K- (w-u)\|_{L_2(K)} 
 \lesssim |\bb|^{-1} \diam(K)\|\partial_{\bb}(w-u)\|_{L_2(K)}.
\end{align*}
Squaring and summing over $K \in \tria_s(K')$, and using inverse estimates yields
\be \label{s10}
\begin{split}
\sum_{K \in \tria_s(K')} \|\brbrR_{K}- (\partial_{\bbf}w+ \cc u - f)\|_{L_2(K)}^2+\|\partial_\bb \brbrR_K- &(w-u)\|_{L_2(K)} ^2 \\
&\lesssim \sigma^2  \eta^2_{K'}(u,w;f),
\end{split}
\ee
only dependent on (upper bounds for) $\varrho$, $|\bb|^{-1}$ and the involved polynomial degrees.

By combining \eqref{s1}--\eqref{s10} one infers \eqref{s9}.

Now using that for vectors $\vec{a}, \vec{b}$,
$$
\big|\|\vec{a}\|^2-\|\vec{b}\|^2\big| \leq \|\vec{a}-\vec{b}\|\|\vec{a}+\vec{b}\| \leq \|\vec{a}-\vec{b}\|(2\|\vec{b}\|+\|\vec{a}-\vec{b}\|) \leq \|\vec{b}\|^2 {\textstyle \frac{\|\vec{a}-\vec{b}\|}{\|\vec{b}\|}}\big(2+{\textstyle \frac{\|\vec{a}-\vec{b}\|}{\|\vec{b}\|}}\big)
$$
and, when $\vec{a}$ is of the form $(\|f_i\|)_i$ and $\vec{b}=(\|g_i\|)_i$, furthermore
$$
\frac{\|\vec{a}-\vec{b}\|}{\|\vec{b}\|} \leq \frac{\sqrt{\sum_i\|f_i-g_i\|^2}}{\sqrt{\sum_i\|g_i\|^2}},
$$
from \eqref{s9} we conclude that
\begin{align*}
\Big|
\sum_{K \subset K'} \|R^\delta_{K}\|_{L_2(K)}^2+\|\partial_\bb R^\delta_K\|_{L_2(K)} ^2  -
\sum_{K \subset K'} \|\partial_{\bbf}w+ \cc u - f\|_{L_2(K)}^2+\|w-u\|_{L_2(K)} ^2 
\Big|\\
\lesssim \sigma^2 \sum_{K \subset K'} \|\partial_{\bbf}w+ \cc u - f\|_{L_2(K)}^2+\|w-u\|_{L_2(K)} ^2.
\end{align*}
which, in compact notation, is the second statement to be proven.
\end{proof}

\subsection{A companion mesh-independent least squares formulation of the transport problem}
\new{For $(u,w) \in \breve{\U}$, it holds that
\begin{align*}
\eta^2_\Omega(u,w;0)&=\|\partial_\bb w+cu\|^2_{L_2(\Omega)}+\|u-w\|_{L_2(\Omega)}^2 \gtrsim \|\partial_\bb w+cw\|^2_{L_2(\Omega)}+\|u-w\|_{L_2(\Omega)}^2\\
& \gtrsim \|\cB^{-1}\|^{-2}_{\cL(H_{0,\Gamma_{\!-}}(\bb;\Omega),L_2(\Omega))} \|w\|^2_{H(\bb;\Omega)}+\|u-w\|_{L_2(\Omega)}^2\\
& \gtrsim \|(u,w)\|_{\breve{\U}}^2 \gtrsim \eta^2_\Omega(u,w;0).
\end{align*}}
Therefore,  for $f \in L_2(\Omega)$ and any closed subspace of $\breve{\U}$, the problem of minimizing $\eta^2_\Omega(\,,\,;f)$ over that subspace is well-posed.

\begin{proposition} 
\label{solsclose} For $\tria \in \T$, let
\be
\label{LS}
(\bar{u}^\delta_\tria,\bar{w}^\delta_\tria) := \argmin_{(u,w) \in \breve{\U}_\tria^\delta} \eta_\Omega^2(u,w;f).
\ee
Then for $\sigma$ small enough, it holds that 
$$
\|(u_\tria^\delta,w_\tria^\delta)-(\bar{u}_\tria^\delta,\bar{w}_\tria^\delta)\|_{\breve{\U}}^2 \lesssim \sigma  \|(\uex,\uex)-(u_\tria^\delta,w_\tria^\delta)\|^2_{\breve{\U}},
$$
where $(\uPG,\wPG)\in \breve\U^\delta_\Tr$ is the Petrov-Galerkin solution of \eqref{PGsol}
\end{proposition}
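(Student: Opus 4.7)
The plan is to exploit that, on the finite-dimensional space $\breve{\U}_\tria^\delta$, the two quadratic functionals
$F(u,w) := \|R^\delta_{\tria_s}(u,w;f)\|^2_{H(\bb;\tria_s)}$ and $G(u,w) := \eta^2_\Omega(u,w;f)$
are tightly equivalent. Indeed, summing the per-cell estimate of Proposition~\ref{proppie} over $K' \in \tria$ (which is legitimate since both functionals split as sums over $K'$) yields a constant $C>0$ with
$$(1-C\sigma)\, G(u,w) \;\leq\; F(u,w) \;\leq\; (1+C\sigma)\, G(u,w) \quad \text{for all } (u,w) \in \breve{\U}_\tria^\delta.$$
By Remark~\ref{rem:PGorth}, the Petrov--Galerkin iterate $(u_\tria^\delta,w_\tria^\delta)$ is the unique minimizer of $F$ over $\breve{\U}_\tria^\delta$, while by definition $(\bar u_\tria^\delta,\bar w_\tria^\delta)$ is the minimizer of $G$ over the same space. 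Hence the two minimizers should differ by at most $O(\sqrt\sigma)$ in the natural energy norm induced by $F$, and it only remains to convert this closeness into the $\breve{\U}$-norm via the uniform discrete inf-sup stability.

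First I would invoke Petrov--Galerkin orthogonality \eqref{PGorth} with the test element $(\bar u_\tria^\delta,\bar w_\tria^\delta) \in \breve{\U}_\tria^\delta$ and then the uniform inf-sup stability \eqref{27} of Remark~\ref{rem0} to obtain
$$\|(\bar u_\tria^\delta,\bar w_\tria^\delta)-(u_\tria^\delta,w_\tria^\delta)\|^2_{\breve{\U}} \;\lesssim\; \|t^\delta_{\tria_s}\big((\bar u_\tria^\delta,\bar w_\tria^\delta)-(u_\tria^\delta,w_\tria^\delta)\big)\|^2_{\V_{\tria_s}} \;=\; F(\bar u_\tria^\delta,\bar w_\tria^\delta) - F(u_\tria^\delta,w_\tria^\delta).$$
Next, invoking the uniform equivalence $F \eqsim G$ above together with the $G$-minimizing property of $(\bar u_\tria^\delta,\bar w_\tria^\delta)$, the short chain
$$F(\bar u_\tria^\delta,\bar w_\tria^\delta) \;\leq\; (1+C\sigma)\, G(\bar u_\tria^\delta,\bar w_\tria^\delta) \;\leq\; (1+C\sigma)\, G(u_\tria^\delta,w_\tria^\delta) \;\leq\; \tfrac{1+C\sigma}{1-C\sigma}\, F(u_\tria^\delta,w_\tria^\delta)$$
shows that $F(\bar u_\tria^\delta,\bar w_\tria^\delta) - F(u_\tria^\delta,w_\tria^\delta) \lesssim \sigma\, F(u_\tria^\delta,w_\tria^\delta)$ for $\sigma$ small enough. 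Finally, the a posteriori equivalence \eqref{precise2} delivers $F(u_\tria^\delta,w_\tria^\delta) \eqsim \|(\uex,\uex)-(u_\tria^\delta,w_\tria^\delta)\|^2_{\breve{\U}}$, and concatenating these three estimates yields the claim.

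The only genuinely technical input is Proposition~\ref{proppie}, which furnishes the sharp $O(\sigma)$-closeness between the mesh-dependent DPG residual indicator and the mesh-free least-squares functional and which is already in hand; everything else amounts to bookkeeping for two quadratic minimization problems living on the same finite-dimensional subspace. I therefore do not anticipate any serious obstacle beyond tracking the hidden constants and confirming that the smallness threshold on $\sigma$ here can be absorbed into the one already required in Corollary~\ref{corol1} and the subsequent propositions.
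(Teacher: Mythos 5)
Your proof is correct, and it is a faithful mirror image of the paper's argument. Both proofs rest on the same technical heart, namely Proposition~\ref{proppie} summed over $K'\in\tria$, which gives the uniform $(1+O(\sigma))$-equivalence between the DPG residual functional $F$ and the least-squares functional $G=\eta^2_\Omega$; and both exploit a Pythagoras-type identity to convert an ``energy gap'' between the two minimizers into a distance in $\breve{\U}$. The difference is which minimizer you anchor on: the paper uses Galerkin orthogonality for the least-squares problem, i.e.\ $\eta^2_\Omega(u,w;f)-\eta^2_\Omega(\bar u_\tria^\delta,\bar w_\tria^\delta;f)=\eta^2_\Omega(u-\bar u_\tria^\delta,w-\bar w_\tria^\delta;0)\eqsim\|(u,w)-(\bar u_\tria^\delta,\bar w_\tria^\delta)\|^2_{\breve{\U}}$, and then bounds the left-hand side using the DPG minimization property together with two applications of Proposition~\ref{proppie}. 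You instead use the Petrov–Galerkin orthogonality \eqref{PGorth} on the DPG side, combined with the uniform discrete inf-sup \eqref{27}, to get the analogous identity for $F$; you then bound $F(\bar u_\tria^\delta,\bar w_\tria^\delta)-F(u_\tria^\delta,w_\tria^\delta)$ via the LS minimization property and the $F\eqsim G$ equivalence. The paper's anchoring is marginally more economical because the norm equivalence $\eta^2_\Omega(\cdot,\cdot;0)\eqsim\|\cdot\|^2_{\breve{\U}}$ comes prepackaged with the LS formulation and avoids an explicit appeal to the discrete inf-sup constant; your route needs \eqref{27} (or equivalently that $\TtT^\delta$ is bounded below on the trial space), which is available but is an additional ingredient. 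Otherwise the two arguments are of the same length and difficulty, and either can be absorbed into the smallness threshold on $\sigma$ already required in Corollary~\ref{corol1}.
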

\begin{proof} 
`Galerkin orthogonality' shows that for any $(u,w) \in \breve{\U}_\tria^\delta$,
\be \label{201}
\eta^2_\Omega(u,w;f)-\eta^2_\Omega(\bar{u}^\delta_\tria,\bar{w}^\delta_\tria;f)=\eta^2(u-\bar{u}_\tria^\delta,w-\bar{w}_\tria^\delta;0) \eqsim
\|(u,w)-(\bar{u}_\tria^\delta,\bar{w}_\tria^\delta)\|_{\breve{\U}}^2.
\ee
Since $(u_\tria^\delta,w_\tria^\delta)$
minimizes $\|R_{\tria_s}^\delta(u,w;f)\|^2_{H(\bb;\tria_s)}$ over $(u,w) \in \breve{\U}_\tria^\delta$, two applications of Proposition~\ref{proppie} show that for some $|\xi_1|, |\xi_2| \lesssim \sigma$
\begin{align*}
(1+\xi_1) \eta_\Omega^2(u_\tria^\delta,w_\tria^\delta;f)&=\|R_{\tria_s}^\delta(u_\tria^\delta,w_\tria^\delta;f)\|^2_{H(\bb;\tria_s)} \\
&\leq 
\|R_{\tria_s}^\delta(\bar{u}_\tria^\delta,\bar{w}_\tria^\delta;f)\|^2_{H(\bb;\tria_s)}  = (1+\xi_2) \eta_\Omega^2(\bar{u}_\tria^\delta,\bar{w}_\tria^\delta;f),
\end{align*}
which, together with \eqref{201}, shows that for $\sigma$ small enough,
$$
\|(u_\tria^\delta,w_\tria^\delta)-(\bar{u}_\tria^\delta,\bar{w}_\tria^\delta)\|_{\breve{\U}}^2 \lesssim \sigma \eta_\Omega^2(u_\tria^\delta,w_\tria^\delta;f) \eqsim \sigma  \|(\uex,\uex)-(u_\tria^\delta,w_\tria^\delta)\|^2_{\breve{\U}}.\qedhere
$$
\end{proof}

 In complete analogy we can define effectivity of a mark-and-refine strategy for the least squares scheme \eqref{LS} based on a bulk criterion for the
quantities $\eta_K$, denoting the collection of correspondingly marked cells by $\bar\cM= \bar{\cM}((\bar{u}_\tria^\delta,\bar{w}_\tria^\delta),\vartheta)$.
\begin{proposition}
\label{rem:equiv}
For sufficiently small $\sigma$,  {$(\refs,\nu)$}-effectivity of the above refinement strategy for the DPG-scheme is {\em equivalent} to  {$(\refs,\nu)$}-effectivity of the analogous strategy  {with the same $\vartheta$} for the least squares estimator.
\end{proposition}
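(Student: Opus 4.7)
The plan is to reduce the equivalence to two comparisons: the DPG indicator $\|R^\delta_{\tria_s(\cM)}\|_{H(\bb;\tria_s(\cM))}$ and the LS indicator $\eta_{\cup\cM}$ agree on any $\cM\subseteq\tria$ up to a multiplicative $1+O(\sigma)$ factor; and the Petrov--Galerkin solution $(\uPG,\wPG)$ and the LS minimizer $(\bar u_\tria^\delta,\bar w_\tria^\delta)$ agree in $\breve{\U}$ up to $O(\sqrt\sigma)$ times the current error. With these two comparisons in hand, marking criteria and reduction factors will transfer from one scheme to the other by routine constant chasing.

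The first comparison follows by summing the second estimate of Proposition~\ref{proppie} over the cells $K'\in\cM$, giving
\[
 \big|\|R^\delta_{\tria_s(\cM)}(u,w;f)\|^2_{H(\bb;\tria_s(\cM))}-\eta^2_{\cup\cM}(u,w;f)\big|\lesssim \sigma\,\eta^2_{\cup\cM}(u,w;f),
\]
uniformly in $\cM$, $(u,w)\in\breve{\U}^\delta_\tria$, and $\tria$, and equally on $\tTr$. The second comparison combines Proposition~\ref{solsclose} with the Lipschitz bound $|\eta_{\Omega'}(u,w;f)-\eta_{\Omega'}(u',w';f)|\le\eta_{\Omega'}(u-u',w-w';0)\lesssim\|(u-u',w-w')\|_{\breve{\U}}$, the last step following from the norm equivalence $\eta_\Omega(\cdot,\cdot;0)\eqsim\|\cdot\|_{\breve{\U}}$ noted just above Proposition~\ref{solsclose}. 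This yields
\[
|\eta_{\cup\cM}(\uPG,\wPG;f)-\eta_{\cup\cM}(\bar u_\tria^\delta,\bar w_\tria^\delta;f)|\lesssim \sqrt\sigma\,\eta_\Omega(\bar u_\tria^\delta,\bar w_\tria^\delta;f),
\]
uniformly in $\cM$, and in particular $\eta_\Omega(\uPG,\wPG;f)\eqsim\eta_\Omega(\bar u_\tria^\delta,\bar w_\tria^\delta;f)$ for $\sigma$ small. Both bounds carry over verbatim to $\tTr$.

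Plugging these into the bulk inequality \eqref{marked2}, any $\cM$ meeting the DPG bulk criterion at level $\vartheta$ for $(\uPG,\wPG)$ meets the LS bulk criterion at level $\vartheta-C\sqrt\sigma$ for $(\bar u_\tria^\delta,\bar w_\tria^\delta)$, and conversely. Since Definition~\ref{def:effective} prescribes the same downwind-enriched refinement rule in both schemes, the same $\cM$ produces the same refined partition $\tTr$. Assuming LS $(r,\nu)$-effectivity at the shifted level, one gets $\eta_\Omega(\bar u_{\tTr}^\delta,\bar w_{\tTr}^\delta;f)\le\nu\,\eta_\Omega(\bar u_\tria^\delta,\bar w_\tria^\delta;f)$, which after translating back through the two comparisons on $\tTr$ reads $\|R^\delta_{\tTrs}(\tuPG,\twPG;f)\|_{H(\bb;\tTrs)}\le(\nu+C'\sqrt\sigma)\,\|R^\delta_{\Trs}(\uPG,\wPG;f)\|_{H(\bb;\Trs)}$. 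For $\sigma$ small this is a strict contraction; the reverse direction is symmetric.

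The main obstacle is the bookkeeping around ``same $\vartheta$'': the bulk parameter transfers only up to an additive $O(\sqrt\sigma)$ loss, and the reduction factor $\nu$ picks up an additive $O(\sqrt\sigma)$ perturbation as well. Both losses must be absorbed by the ``sufficiently small $\sigma$'' hypothesis, which must be taken small relative to both $\vartheta$ and to the reduction slack $1-\nu$. This is the precise sense in which the two effectivity statements coincide.
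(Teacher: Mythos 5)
Your proposal is correct and takes essentially the same route as the paper's own proof: both reduce the claim to Propositions~\ref{proppie} and~\ref{solsclose}, transfer the bulk criterion and the reduction factor between the two schemes up to $1+O(\sqrt\sigma)$ perturbations, and absorb the resulting shift in $\vartheta$ and $\nu$ by taking $\sigma$ sufficiently small. The only cosmetic difference is the order of the intermediate comparisons (you chain through the LS indicator of the PG solution, the paper chains through both indicators of the LS solution), which is immaterial.
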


 \begin{proof}
Using Proposition~\ref{solsclose}, stability of both estimators shows that for any $\cM \subset \tria$,
 \be \label{203}
 \begin{split}
& \left. \begin{array}{r}
 \big| \|R_{\tria_s(\cM)}^\delta(\bar{u}_\tria^\delta,\bar{w}_\tria^\delta;f)\|_{H(\bb;\tria_s(\cM))} -
 R_{\tria_s(\cM)}^\delta(u_\tria^\delta,w_\tria^\delta;f)\|_{H(\bb;\tria_s(\cM))} \big|\\[2mm]
 \big| \eta_{\cM}(\bar{u}_\tria^\delta,\bar{w}_\tria^\delta;f)-
 \eta_{\cM}(u_\tria^\delta,w_\tria^\delta;f)\big|
\end{array}
\right\}\\
 &\qquad\lesssim \sqrt{\sigma}  \eta_\Omega(u_\tria^\delta,w_\tria^\delta;f).
 \end{split}
 \ee
  Now 
  let $\cM \subset \tria$ be such that
 $$
 \|R_{\tria_s(\cM)}^\delta(u_\tria^\delta,w_\tria^\delta;f)\|_{H(\bb;\tria_s(\cM))} \geq \vartheta \|R_{\tria_s}^\delta(u_\tria^\delta,w_\tria^\delta;f)\|_{H(\bb;\tria_s)}.
 $$
 Then elementary operations using  Propositions~\ref{proppie} and \ref{solsclose} show the existence of a $|\xi|\lesssim \sqrt{\sigma}$, and thus for $\sigma$ small enough,  $|\xi|\leq \frac12$, with
 $$
  \eta_{\cM}(\bar{u}_\tria^\delta,\bar{w}_\tria^\delta;f) \geq \vartheta (1+\xi) \eta_{\Omega}(\bar{u}_\tria^\delta,\bar{w}_\tria^\delta;f).
  $$
Now, \emph{if} the latter implies that for some $\nu=\nu(\vartheta)<1$, and with  {the refined mesh $\tilde{\tria}=\tilde{\tria}(\tria,\cM,r)$ from Definition~\ref{def:effective}},
it holds that $\eta_\Omega(\bar{u}_{\tilde \tria}^\delta,\bar{w}_{\tilde \tria}^\delta;f) \leq \nu \eta_\Omega(\bar{u}_\tria^\delta,\bar{w}_\tria^\delta;f)$, \emph{then} we have that for some $|\xi_1|,|\xi_2|,|\xi_3| \leq \sigma$,
\begin{align*}
&\| R^\delta_{\tilde{\tria}_s}(u_{\tilde \tria}^\delta,w_{\tilde \tria}^\delta;f) \|_{H(\bb;\tilde{\tria}_s)} \leq
\| R^\delta_{\tilde{\tria}_s}(\bar{u}_{\tilde \tria}^\delta,\bar{w}_{\tilde \tria}^\delta;f) \|_{H(\bb;\tilde{\tria}_s)}
= \eta_\Omega(\bar{u}_{\tilde \tria}^\delta,\bar{w}_{\tilde \tria}^\delta;f)(1+\xi_1)
\\
&\leq \nu \eta_\Omega(\bar{u}_\tria^\delta,\bar{w}_\tria^\delta;f)(1+\xi_1)=\nu \eta_\Omega(u_\tria^\delta,w_\tria^\delta;f)(1+\xi_1)(1+\sqrt{\xi_2})\\
&=
\nu \|R_{\tria_s}^\delta(u_\tria^\delta,w_\tria^\delta;f)\|_{H(\bb;\tria_s)}(1+\xi_1)(1+\sqrt{\xi_2})(1+\xi_3),
\end{align*}
showing for $\sigma$ small enough the result of Theorem~\ref{thm:red}.

Applying the above arguments with interchanged roles of $\|R_{\tria_s}^\delta(\,,\,;f)\|_{H(\bb;\tria_s)}$ and $\eta_\Omega(\,,\;f)$ and  
choosing $\sigma$ small enough, the claim of Remark \ref{rem:equiv} follows.
\end{proof}

In view of  {Proposition} \ref{rem:equiv}, the proof of Theorem \ref{thm:red}
  {is} complete once we establish the following equivalent
result.

 {\begin{theorem} 
\label{thm:red2} 
Let $\bb$ and $c$ be as in Theorem~\ref{thm:red}, and, to control $\,\sup_{K \in \tria \in \T}\diam(K)$, let $\sigma$ be sufficiently small. Then
for all $\vartheta \in (0,1]$, there exist $r \in\N$, $\nu=\nu(\vartheta) <1$ with the following property: whenever
for $\tria \in \T$ and $f \in \F^\delta_\tria$, the set of marked elements
$\bar{\cM}=\bar{\cM}((\bar{u}_\tria^\delta,\bar{w}_\tria^\delta),\vartheta) \subseteq \tria$ is such that
\be \label{bulk}
\eta_{\bar{\cM}}(\bar{u}_\tria^\delta,\bar{w}_\tria^\delta;f) \geq \vartheta \eta_\Omega(\bar{u}_\tria^\delta,\bar{w}_\tria^\delta;f),
\ee
then for the refinement $\tTr=\tTr(\tria,\bar{\cM},r)$ according to Definition~\ref{def:effective}, it follows that
\be
\label{reduced2}
 \eta_\Omega(\bar{u}_{\tilde \tria}^\delta,\bar{w}_{\tilde \tria}^\delta;f) \leq \nu \eta_\Omega(\bar{u}_\tria^\delta,\bar{w}_\tria^\delta;f).
\ee
\end{theorem}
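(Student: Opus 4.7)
The plan is to exploit the minimality of $(\bar u^\delta_{\tilde{\tria}}, \bar w^\delta_{\tilde{\tria}})$ over the enlarged space $\breve{\U}^\delta_{\tilde{\tria}}\supset\breve{\U}^\delta_\tria$: for every competitor $(u^\#,w^\#)\in\breve{\U}^\delta_{\tilde{\tria}}$,
$$
\eta_\Omega(\bar u^\delta_{\tilde{\tria}}, \bar w^\delta_{\tilde{\tria}}; f) \le \eta_\Omega(u^\#,w^\#;f).
$$
First, I would construct $(u^\#,w^\#)$ equal to $(\bar u^\delta_\tria,\bar w^\delta_\tria)$ outside the (extended) marked region and such that, on each marked $K'\in\bar{\cM}$,
$$
\eta^2_{K'}(u^\#,w^\#;f)\le\mu^2(r)\,\eta^2_{K'}(\bar u^\delta_\tria,\bar w^\delta_\tria;f),
$$
with $\mu(r)\to 0$ as the refinement depth $r\to\infty$. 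Granting this, the $L_2$-additivity of $\eta^2$ together with the bulk criterion \eqref{bulk} gives
$$
\eta^2_\Omega(u^\#,w^\#;f) \le \bigl(1-(1-\mu^2(r))\vartheta^2\bigr)\,\eta^2_\Omega(\bar u^\delta_\tria,\bar w^\delta_\tria;f),
$$
which is \eqref{reduced2} with $\nu^2=1-(1-\mu^2(r))\vartheta^2<1$ for $r$ chosen sufficiently large.

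On each marked $K'$ the natural candidate for $w^\#$ is a piecewise polynomial approximation on the refined subgrid $\tilde{\tria}|_{K'}$ of the classical local transport solution $\hat w$ defined by $\partial_\bb \hat w + c\hat w = f$ on $K'$ with inflow datum $\hat w|_{\partial K'_{\!-}}=\bar w^\delta_\tria|_{\partial K'_{\!-}}$, combined with $u^\#:=w^\#$ on $K'$. Then $\eta^2_{K'}(u^\#,w^\#;f) = \|\partial_\bb(w^\#-\hat w) + c(w^\#-\hat w)\|^2_{L_2(K')}$ collapses to a pure polynomial-approximation error of $\hat w$. Because $\bb$ is constant and $c, f$ are piecewise polynomial with respect to $\tria$ in the present setting, $\hat w$ is as smooth along streamlines as the data allow; the best piecewise polynomial approximation of degree $m_w$ on $\tilde{\tria}|_{K'}$ therefore has $H(\bb;K')$-error decaying at rate $(\diam(K')/2^r)^{m_w+1}$ times a bounded smoothness norm of $\hat w$, and this forces $\mu(r)\to 0$.

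The main obstacle is enforcing $w^\#\in H(\bb;\Omega)$: the outflow trace $\hat w|_{\partial K'_{\!+}}$ generically disagrees with $\bar w^\delta_\tria|_{\partial K'_{\!+}}$, so the naive candidate fails continuity across the outflow face. Integrating the identity $\partial_\bb(\hat w-\bar w^\delta_\tria)=(f-c\bar u^\delta_\tria-\partial_\bb\bar w^\delta_\tria) - c(\hat w-\bar u^\delta_\tria)$ along streamlines through $K'$ and invoking Gronwall shows that the outflow mismatch has $L_2(\partial K'_{\!+})$-size of order $\sqrt{\diam(K')}\,\eta_{K'}(\bar u^\delta_\tria,\bar w^\delta_\tria;f)$. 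For $n=1$ this is a scalar per exit point, absorbable by a coarse polynomial correction on the immediately downstream (unmarked) cell; balancing the $r$-dependent savings on $K'$ against the fixed-order residual this propagates, while using the Galerkin orthogonality of $(\bar u^\delta_\tria,\bar w^\delta_\tria)$ and the slack permitted by $m_w\le m_u+1$, should produce a strict net reduction. For $n>1$ the mismatch is a genuine function on an $(n-1)$-face and cannot be absorbed in a single downstream cell; instead it must be distributed along every streamline leaving $K'$, which is precisely what the downwind enrichment permits and what Conjecture~\ref{conjecture1} is designed to quantify. Once \eqref{reduced2} is established, Proposition~\ref{rem:equiv} transfers it back to the DPG error-reduction statement of Theorem~\ref{thm:red}.
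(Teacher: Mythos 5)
Your core quantitative claim is false, and it is false for precisely the reason you flag as ``the main obstacle'' and then wave away. If the competitor $(u^{\#},w^{\#})$ agrees with $(\bar u^\delta_\tria,\bar w^\delta_\tria)$ outside a marked $K'$, then $w^{\#}\in H(\bb;\Omega)$ forces $w^{\#}$ to share the trace of $\bar w^\delta_\tria$ on \emph{both} $\partial K'_{\!-}$ and $\partial K'_{\!+}$. Consequently $\partial_\bb(w^{\#}-\bar w^\delta_\tria)$ has vanishing line average along every streamline in $K'$, so $\|g-\partial_\bb(w^{\#}-\bar w^\delta_\tria)\|_{L_2(K')}\ge \|\GG\|_{L_2(K')}$, where $\GG$ is the streamline average of $g$ introduced in \eqref{defG}. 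This lower bound is independent of $r$, so $\mu(r)$ does \emph{not} tend to $0$ as $r\to\infty$; the local reduction factor on a Type-(II) cell is at best $\alpha_{K'}=\|\GG\|_{L_2(K')}/\|g\|_{L_2(K')}$, which a priori can be arbitrarily close to $1$. Your informal Gronwall estimate of the outflow mismatch does not remove this obstruction: the mismatch is not small relative to the local $\eta_{K'}$ in the regime $\alpha_{K'}\approx 1$, and it cannot be ``absorbed'' by a coarse polynomial correction in the downstream cell without paying an $O(1)$ price there. Indeed, the paper establishes the $n=1$ case \emph{without} any downstream refinement (Definition \ref{def:effective} adds downstream cells only for $n>1$), which already shows the mechanism you propose is not the one at work.

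The actual proof does not reduce $\eta_{K'}$ on \emph{every} marked cell with $\mu(r)\to 0$. Instead it (i) constructs a subset $\bar{\bar{\cM}}\subset\bar{\cM}$ carrying a fixed fraction $\vartheta'$ of the marked bulk and (ii) reduces $\eta_{K'}$ on those cells only by a fixed factor $\nu'<1$ (Corollary \ref{lem:muc} for Type-(I) cells via a $u$-correction, Corollary \ref{corol2} for Type-(II) cells via a $w$-correction that requires $\alpha_{K'}\le\alpha<1$). Step (ii) for Type-(II) cells is \emph{not} local: ensuring $\alpha_{K'}\le\alpha<1$ on a bulk of Type-(II) cells is achieved by a \emph{global} argument exploiting the least-squares orthogonality relations \eqref{ortheg}--\eqref{ortheg2} and, for $n=1$, a transport ODE solution $z$ constructed over the entire interval $(0,1)$ (see \eqref{zprime}). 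For $n>1$ the analogous global construction is what leads to Conjecture \ref{conjecture1}. Neither the Type-I/Type-II split, nor the quantity $\alpha_{K'}$, nor the global orthogonality argument appears in your proposal, and without them the reduction rate cannot be established.
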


The remainder of this section is devoted to the proof of 
 Theorem~\ref{thm:red2}.
  We are going to show that for some constants $\vartheta' >0$ and $\nu' <1$, thus independent of $\tria$ (subject to $\sigma$ being
  sufficiently small), for $\bar{\cM}$ as in \eqref{bulk} there exists an $\bar{\bar{\cM}} \subset \bar{\cM}$ with 
 \be \label{fraction}
\eta_{\bar{\bar{\cM}}}(\bar{u}_\tria^\delta,\bar{w}_\tria^\delta;f) \geq \vartheta'\eta_{\bar{\cM}}(\bar{u}_\tria^\delta,\bar{w}_\tria^\delta;f),
\ee
and that for any $K'\in \bar{\bar\cM}$,
 \be \label{eta_reduction}
\inf_{\{(u,w) \in \breve{\U}^\delta_{\tilde \tria}\colon \supp u,\,\supp w \subset K'\}} \eta_{K'}(\bar{u}_\tria^\delta-u,\bar{w}_\tria^\delta-w;f) \leq \nu' \eta_{K'}(\bar{u}_\tria^\delta,\bar{w}_\tria^\delta;f).
\ee
In other words, for the cells in $\bar{\bar{\cM}}$ one can correct the current approximation {\em cell-wise} to reduce the corresponding
error indicator.
An elementary calculation shows that then these two properties imply \eqref{reduced2} with constant\\ $\nu:=\sqrt{(\vartheta\vartheta')^2 (\nu')^2+1-(\vartheta\vartheta')^2}<1$.}

\subsubsection{Reduction of the local mesh-independent error indicator}\label{ssec:5.4}
In this subsection we work towards the verification of  \eqref{eta_reduction} for those $K' \in  \bar{\cM}$ that satisfy certain conditions. 
Then in the following two subsections, for two possible scenarios we will construct subsets $\bar{\bar{\cM}} \subset \bar{\cM}$ of $K'$ that satisfy these conditions, and for which \eqref{fraction} is satisfied.  This will then prove Theorem~\ref{thm:red2} and hence Theorem~\ref{thm:red}.

We recall that the  {reaction coefficient $c$ is assumed to be a non-negative constant} over each $K'\in \Tr$.
We introduce the shorthand notations
\be \label{shortnots}
g:=  {\partial_\bb \bar{w}^\delta_\tria +c \bar{u}^\delta_\tria-f},\quad  {e:=\bar{u}^\delta_\tria-\bar{w}^\delta_\tria},
\ee
so that
$$
\eta_{K'}^2(\bar{u}_\tria^\delta-u,\bar{w}^\delta_\tria-w;f)=\|e-(u-w)\|_{L_2(K')}^2+\|g-(\partial_\bb w+c u)\|_{L_2(K')}^2.
$$

Fixing
$$
 {\beta \in \big(0,\frac{1}{4}\big),}
$$
we refer to the $K' \in  {\bar{\cM}}$ for which
\be
\label{type-I}
\frac{\|e+cg\|_{L_2(K')}^2}{\|g\|_{L_2(K')}^2+\|e\|_{L_2(K')}^2}\geq \beta \quad\text{(Type-(I))},
\ee
as Type-(I) and for the remaining ones as Type-(II). Accordingly, we decompose $\bar\cM$ into the Type-(I) and Type-(II) elements
 writing $\bar{\mathcal M}=\bar{\mathcal M}_{\rm I} \dot\cup \bar{\mathcal M}_{\rm II}$.

\paragraph{\bf Type-(I) elements:}
We start with showing that for  $K' \in \bar{\mathcal M}_{\rm I}$, \eqref{eta_reduction} can be already established by a correction of the $u$-component.
\begin{lemma} 
\label{lem:aux}
Assume that $\|\cdot\|$ is induced by the inner product $\langle\cdot,\cdot\rangle$ of some Hilbert space $H$ and let $g,e\in H$ be arbitrary but fixed.
For any scalar $c$ and $u\in H$ let
\be
\label{Qc}
Q(u):= \|e-u\|^2+\|g-c u\|^2.
\ee
Then
\begin{align*}
u_{\min}:=\argmin_{u \in H} Q(u)&=\frac{e+c g}{1+c^2},\\
Q(u)-Q(u_{\min})&=(1+c^2)\|u-u_{\min}\|_H^2,\\
\|u_{\min}\|^2 &\leq \frac{Q(0)}{1+c^2},\\
Q(u_{\min}) &= \Big(1-\frac{\|e+cg\|^2}{(1+c^2)(\|g\|^2+\|e\|^2)}\Big) Q(0),
\end{align*}
\end{lemma}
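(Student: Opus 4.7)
The plan is to treat this as a straightforward quadratic minimization on the Hilbert space $H$ and simply expand and complete the square. Write
\[
Q(u) = \|e\|^2 - 2\langle e,u\rangle + \|u\|^2 + \|g\|^2 - 2c\langle g,u\rangle + c^2\|u\|^2 = (1+c^2)\|u\|^2 - 2\langle e+cg,u\rangle + Q(0),
\]
noting that $Q(0)=\|e\|^2+\|g\|^2$. Completing the square with respect to the scalar multiple $(1+c^2)$ of the quadratic term yields
\[
Q(u) = (1+c^2)\bigl\|u - \tfrac{e+cg}{1+c^2}\bigr\|^2 + Q(0) - \tfrac{\|e+cg\|^2}{1+c^2}.
\]
From this identity everything in the statement follows.

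Indeed, the first term is the only $u$-dependent contribution and is a non-negative multiple of a squared norm, so the unique minimizer is $u_{\min}=(e+cg)/(1+c^2)$, which is the first claim; subtracting $Q(u_{\min})$ from $Q(u)$ reproduces exactly $(1+c^2)\|u-u_{\min}\|^2$, which is the second claim; and evaluating at $u=u_{\min}$ gives $Q(u_{\min})=Q(0)-\|e+cg\|^2/(1+c^2)$, which, after factoring out $Q(0)$, becomes the fourth claim.

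It remains to establish the bound $\|u_{\min}\|^2\le Q(0)/(1+c^2)$. Since $\|u_{\min}\|^2=\|e+cg\|^2/(1+c^2)^2$, it suffices to show $\|e+cg\|^2\le (1+c^2)(\|e\|^2+\|g\|^2)=(1+c^2)Q(0)$. Expanding $\|e+cg\|^2=\|e\|^2+2c\langle e,g\rangle+c^2\|g\|^2$ and applying the elementary inequality $2c\langle e,g\rangle\le 2|c|\|e\|\|g\|\le c^2\|e\|^2+\|g\|^2$ (Cauchy--Schwarz followed by AM--GM) yields the desired bound. I anticipate no real obstacle here; the entire lemma is a purely algebraic consequence of the completed-square identity above.
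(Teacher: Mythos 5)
Your proof is correct and takes essentially the same elementary route as the paper: the paper expresses the same quadratic algebra via the identity $Q(u+h)-Q(u)=2\langle h,(1+c^2)u-(e+cg)\rangle+(1+c^2)\|h\|^2$ rather than an explicit completed square, and for the third bound it uses the triangle inequality followed by Cauchy--Schwarz in $\R^2$ where you use Cauchy--Schwarz followed by AM--GM on the squared norm; these are cosmetic differences only.
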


\begin{proof}
The first two statements follow from
$$
Q(u+h)-Q(u)=2\langle h,(c^2+1)u-(e+cg)\rangle+(1+c^2)\|h\|^2.
$$
The third statement is a consequence of
$$
\|u_{\min}\|=\|\frac{e+cg}{1+c^2}\|\leq \frac{1}{1+c^2}\|e\|+\frac{c}{1+c^2}\|g\| \leq \frac{\sqrt{\|e\|^2+\|g\|^2}}{\sqrt{1+c^2}}=\frac{Q(0)^{\frac{1}{2}}}{\sqrt{1+c^2}}.
$$
The last statement follows from
$$
\frac{Q(0)-Q(u_{\min})}{Q(0)} =\frac{(1+c^2)\|\frac{e+cg}{1+c^2}\|^2}{\|g\|^2+\|e\|^2}.\qedhere
$$
\end{proof}

\begin{corollary}
\label{lem:muc}
For $\refs$ sufficiently large, only dependent on the polynomial degrees $m_u$, $m_w$ and $m_f$, and on  {an upper bound for $|c_{K'}|$},
for all  $K' \in \bar{\mathcal M}_{\rm I}$ it holds that
$$
\inf_{\{(u,0) \in \breve{\U}_{\tilde \tria}^\delta\colon \supp u\subset K'\}} \eta_{K'}^2(\bar{u}_\tria^\delta-u,\bar{w}^\delta_\tria;f) \leq \Big(1-\frac{\beta}{2(1+c_{K'}^2)}\Big) \eta_{K'}^2(\bar{u}_\tria^\delta,\bar{w}^\delta_\tria;f).
$$
\end{corollary}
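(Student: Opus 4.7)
The plan is to apply Lemma~\ref{lem:aux} with $H = L_2(K')$ and scalar $c := c_{K'}$, so that for $(u,0) \in \breve{\U}^\delta_{\tilde\tria}$ with $\supp u \subset K'$ the objective becomes
\[
Q(u) = \|e-u\|_{L_2(K')}^2 + \|g-c u\|_{L_2(K')}^2 = \eta_{K'}^2(\bar u^\delta_\tria - u, \bar w^\delta_\tria; f),
\]
with $e, g$ as in \eqref{shortnots}. The lemma then supplies the unconstrained minimizer $u_{\min} = (e + c g)/(1+c^2)$, the reduction $Q(u_{\min}) \leq (1 - \beta/(1+c^2))\, Q(0)$ via the Type-(I) condition \eqref{type-I}, the bound $\|u_{\min}\|_{L_2(K')}^2 \leq Q(0)/(1+c^2)$, and the Galerkin-type identity $Q(u^*) - Q(u_{\min}) = (1+c^2)\|u^* - u_{\min}\|_{L_2(K')}^2$ for any $u^* \in L_2(K')$. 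It therefore suffices to construct an admissible $u^* \in \prod_{\tilde K \subset K',\, \tilde K \in \tilde\tria} \cP_{m_u}(\tilde K)$, extended by zero outside $K'$, for which $(1+c^2)\|u^* - u_{\min}\|^2_{L_2(K')} \leq \tfrac{\beta}{2(1+c^2)} Q(0)$.

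The structural fact that makes this feasible is that $u_{\min}|_{K'}$ is a \emph{single polynomial} of bounded degree: since $\bb$ and $c$ are constant on $K'$, one has $e \in \cP_{\max(m_u,m_w)}(K')$ and $g \in \cP_{\max(m_w-1, m_u, m_f)}(K')$, hence $u_{\min} \in \cP_D(K')$ with $D = \max(m_u, m_w, m_f) \leq \max(m_u+1, m_f)$ under the standing assumption $m_w \leq m_u+1$. The natural candidate for $u^*$ is the elementwise $L_2$-projection of $u_{\min}$ onto $\cP_{m_u}(\tilde K)$ for each $\tilde K \in \tilde\tria$ contained in $K'$, extended by zero elsewhere---this is admissible precisely because $u$ lives in a broken $L_2$-space with no continuity constraint across $\partial K'$, so no skeleton or $w$-component needs to be touched. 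A standard scaling argument---pullback to a reference simplex, Bramble--Hilbert on the reference, and an inverse estimate converting the $H^{m_u+1}$-seminorm of the polynomial $u_{\min}$ back to its $L_2$-norm---delivers
\[
\|u_{\min} - u^*\|_{L_2(K')} \lesssim \bigl(h_{\tilde\tria,K'}/\diam(K')\bigr)^{m_u+1}\,\|u_{\min}\|_{L_2(K')},
\]
where $h_{\tilde\tria,K'}$ is the maximal diameter of cells of $\tilde\tria$ contained in $K'$, with a constant depending only on the polynomial degrees and on $\varrho$.

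Combining this with $\|u_{\min}\|_{L_2(K')}^2 \leq Q(0)/(1+c^2)$ yields $(1+c^2)\|u^* - u_{\min}\|^2 \leq C\,(h_{\tilde\tria,K'}/\diam(K'))^{2(m_u+1)}\, Q(0)$, and taking $r$ large enough---depending only on $m_u, m_w, m_f, \varrho, \beta$ and an upper bound for $|c_{K'}|$---to force $h_{\tilde\tria,K'}/\diam(K')$ below the threshold that makes this bounded by $\tfrac{\beta}{2(1+c_{K'}^2)}\, Q(0)$ closes the argument. I anticipate the only real technical point to be verifying uniformity of the approximation constants and of the conversion between refinement depth $r$ and the ratio $h_{\tilde\tria,K'}/\diam(K')$ across all $\tria \in \T$ and all $K' \in \bar{\cM}_{\rm I}$---precisely the role of the shape regularity \eqref{25} and the subgrid factor bounds \eqref{26}--\eqref{26a}.
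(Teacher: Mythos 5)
Your argument is correct and follows the paper's own route exactly: invoke Lemma~\ref{lem:aux} on $L_2(K')$ to get the unconstrained minimizer $u_{\min}=(e+c_{K'}g)/(1+c_{K'}^2)$ and the bound $Q(u_{\min})\leq(1-\beta/(1+c_{K'}^2))Q(0)$, then approximate the fixed-degree polynomial $u_{\min}$ by a piecewise $\cP_{m_u}$ function on the $r$-fold refinement using the Galerkin identity from the lemma. The only difference is that where the paper compresses the final step into ``the usual combination of direct and inverse estimates,'' you spell out the Bramble--Hilbert/inverse-inequality scaling that gives $\|u_{\min}-u^*\|_{L_2(K')}\lesssim(h_{\tilde\tria,K'}/\diam K')^{m_u+1}\|u_{\min}\|_{L_2(K')}$, which is precisely the intended argument.
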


\begin{proof}
Lemma \ref{lem:aux} says that $u_{\min}=\frac{e+c_{K'} g}{1+c_{K'}^2}$ minimizes $Q(u):=\eta^2_{K'}(\bar{u}^\delta_\tria-u,\bar{w}^\delta_\tria;f)$ over $L_2(K')$, and that 
$Q(u_{\min}) \leq (1-\frac{\beta}{1+c_{K'}^2}) Q(0)$.

The function $u_{\min}$ is a polynomial on $K'$ and can therefore be  approximated with relative accuracy $\sqrt{\frac{\beta}{2(1+c_{K'}^2)}}$ by a 
piecewise polynomial $\tilde u$ on a sufficiently refined mesh. This follows from the usual combination of direct and inverse estimates.
The proof is completed by
$$
Q(\tilde u)-Q(u_{\min})=(1+c_{K'}^2)\|\tilde u-u_{\min}\|_{L_2(K')}^2 \leq \beta/2 \|u_{\min}\|_{L_2(K')}^2 \leq \frac{\beta}{2(1+c_{K'}^2)} Q(0)
$$
by applications of the statements from Lemma~\ref{lem:aux}.
\end{proof} 

\paragraph{\bf Type-(II) elements:}
It remains to discuss $K' \in \bar{\mathcal M}_{\rm II}$.
For those elements we need to find suitable corrections for the component $\bar{w}^\delta_\tria$ - in brief the $w$-component.

We will search for a $(0,w) \in \breve{\U}_{\tilde{\tria}}^\delta$ with $\supp w \subset K'$ such that $\|g-\partial_\bb w\|_{L_2(K')}^2<\|g\|_{L_2(K')}^2$.
In order to show that this reduction is not lost by a similar increase by the replacement of $\|e\|_{L_2(K')}^2$ by $\|e-w\|_{L_2(K')}^2$, we will make use of the fact that for $K' \in \bar{\mathcal M}_{\rm II}$, the term $\|e\|_{L_2(K')}$ is controlled
by a multiple of $\|g\|_{L_2(K')}$ depending only on $\|c\|_{L_\infty(\Omega)}$:
\begin{lemma} \label{lem1}
 For $K'\in \bar{\mathcal M}_{\rm II}$, it holds that 
\be
\label{mug}
 {\omega_{K'} =  \omega_{K'}(e,g)} := \frac{\|e\|_{L_2(K')}}{\|g\|_{L_2(K')}}  {<} 2 |c_{K'}| + 1,
\ee
and thus  {$\eta_{K'}^2(\bar{u}_\tria^\delta,\bar{w}_\tria^\delta;f)^2 \leq \big((2 |c_{K'}| + 1)^2+1\big) \|g\|^2_{L_2(K')}$.}
\end{lemma}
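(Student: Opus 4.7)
The plan is to exploit the fact that $K' \in \bar{\mathcal{M}}_{\rm II}$ is precisely the negation of the Type-(I) criterion \eqref{type-I}. That is,
$$\|e + c_{K'} g\|_{L_2(K')}^2 < \beta\bigl(\|g\|_{L_2(K')}^2 + \|e\|_{L_2(K')}^2\bigr),$$
and the task is then purely a scalar inequality in $\omega_{K'} = \|e\|_{L_2(K')}/\|g\|_{L_2(K')}$. Writing all norms over $K'$ and suppressing the subscript, the reverse triangle inequality gives $\|e\| - |c_{K'}|\|g\| \leq \|e + c_{K'} g\|$. If $\|e\| \leq |c_{K'}|\|g\|$, then $\omega_{K'} \leq |c_{K'}| < 2|c_{K'}| + 1$ and we are done; otherwise squaring yields $(\omega_{K'} - |c_{K'}|)^2 < \beta(1 + \omega_{K'}^2)$, or equivalently
$$(1-\beta)\omega_{K'}^2 - 2|c_{K'}|\omega_{K'} + (c_{K'}^2 - \beta) < 0.$$

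Since $\beta < \tfrac14$, the leading coefficient satisfies $1-\beta > \tfrac34 > 0$, so this is an upward-opening quadratic and $\omega_{K'}$ must lie strictly below the larger root
$$\omega_+ = \frac{|c_{K'}| + \sqrt{\beta(1 - \beta + c_{K'}^2)}}{1-\beta};$$
note the discriminant $\beta(1-\beta+c_{K'}^2) \geq \beta \cdot \tfrac34 > 0$ is automatically positive. Bounding $\sqrt{1-\beta + c_{K'}^2} \leq \sqrt{1+c_{K'}^2} \leq 1 + |c_{K'}|$ and using $\sqrt{\beta} < \tfrac12$, $1 - \beta > \tfrac34$, I then estimate
$$\omega_+ \leq \frac{|c_{K'}|(1+\sqrt{\beta}) + \sqrt{\beta}}{1-\beta} < \frac{\tfrac32|c_{K'}| + \tfrac12}{\tfrac34} = 2|c_{K'}| + \tfrac23 < 2|c_{K'}| + 1,$$
which gives \eqref{mug}.

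Finally, from $\eta_{K'}^2(\bar{u}_\tria^\delta,\bar{w}_\tria^\delta;f) = \|e\|_{L_2(K')}^2 + \|g\|_{L_2(K')}^2 = (\omega_{K'}^2 + 1)\|g\|_{L_2(K')}^2$ and the bound just obtained, the claimed estimate $\eta_{K'}^2 \leq \bigl((2|c_{K'}|+1)^2 + 1\bigr)\|g\|_{L_2(K')}^2$ is immediate. There is no real obstacle here; the only point requiring any care is verifying that the hypothesis $\beta < \tfrac14$ is used in exactly the right place — namely to guarantee positivity of $1-\beta$ and to make the coefficients in the final arithmetic estimate work out. The content of the lemma is that Type-(II) cells are those where $\|e\|$ cannot be much larger than $|c_{K'}|\|g\|$, which is precisely what the quadratic bound encodes.
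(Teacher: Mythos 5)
Your proof is correct and follows essentially the same route as the paper's: both reduce the Type-(II) condition, via the reverse triangle inequality, to the scalar inequality $(\omega_{K'} - |c_{K'}|)^2 < \beta(1+\omega_{K'}^2)$ and then extract a bound on $\omega_{K'}$. The only cosmetic difference is in the final step, where you solve the quadratic explicitly and bound its larger root, whereas the paper linearizes directly via $\sqrt{1+\omega_{K'}^2}\le 1+\omega_{K'}$ and $\sqrt\beta<\tfrac12$ to arrive at $\omega_{K'}<|c_{K'}|+\tfrac12+\tfrac12\omega_{K'}$, which is a little shorter but yields the same conclusion.
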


\begin{proof}
 Recall that $K'\in \bar{\mathcal M}_{\rm II}$ means that 
\be
\label{type-II}
\frac{\|e+cg\|_{L_2(K')}^2}{\|g\|_{L_2(K')}^2+\|e\|_{L_2(K')}^2}< \beta,
\ee
so that in particular $g \neq 0$. Substituting $\|e\|_{L_2(K')} =  { {\omega_{K'}}} \|g\|_{L_2(K')}$,
\eqref{type-II} implies
$\big| { {\omega_{K'}}} - |c_{K'}|\big|  {<} \sqrt{\beta(1+ { {\omega_{K'}}}^2)}$
which gives
\begin{align*}
 { {\omega_{K'}}} & {<} |c_{K'}| + \sqrt{\beta} + \sqrt{\beta}\, { {\omega_{K'}}}  {<} |c_{K'}| + {\textstyle \frac{1}{2}}+ {\textstyle \frac{1}{2}}\, { {\omega_{K'}}}
\end{align*}
 {by our assumption that $\beta<\frac{1}{4}$}.  {This confirms the first and so the second claim.}
\end{proof}

Our argument for handling Type-(II) elements requires the following further preparations.
  For every  $\bs\in \partial K'_-$  
let as before $r(\bs)$ denote length of the line segment emanating from $\bs\in \partial K'_-$ and ending in $\partial K'_+$.
One observes then
that a function $Q$ on $K'$ can be written as 
\be
\label{Qz}
Q = \partial_\bb z, \quad  z|_{\partial K'_-\cup\partial K'_+}=0,
\ee
 if and only if each of its
{\em line averages  in direction } $\bbb := \bb/|\bb|$ vanishes, i.e.,
$$
A_\bs(Q):= r(\bs)^{-1}\int_0^{r(\bs)}Q(\bs+t\bbb)dt =0,\quad \bs\in \partial K_-.
$$
In fact, then
$
z(\bs + t\bbb) :=|\bb|^{-1} \int_0^t Q(\bs + t'\bbb)dt'
$
satisfies \eqref{Qz}. 

For $g$ as in \eqref{shortnots}, the function  {$G=G(g)$, defined on each $K' \in \tria$ by}
\be \label{defG}
\GG({\bf x}) = A_\bs(g)\quad \mbox{for } \,\, {\bf x} = \bs + t\bbb,\, \bs\in \partial K_-,\,  {t\in [0,r(s)],}
\ee
is obviously constant along $\bb$ and
\be
\label{Q}
A_\bs (g - \GG) = 0, \quad\mbox{every $\bs\in \partial K'_-$}.
\ee
Hence, for $z_g$, defined by
\be
\label{zdef}
z_g(\bs + t\bbb) :=|\bb|^{-1} \int_0^t  (g - \GG)(\bs + t'\bbb)dt'\quad \mbox{for } \,\,  {t \in [0,r(s)],}
\ee
we have 
\be
\label{dbz}
g-\partial_\bb z_g = \GG.
\ee

Thanks to $g-\GG \perp_{L_2(K')} \GG$, we have 
$$
\|\GG\|^2_{L_2(K')}=\|g\|_{L_2(K')}^2-\|g-\GG\|_{L_2(K')}^2,
$$
and so in particular $\|\GG\|_{L_2(K')} \leq \|g\|_{L_2(K')}$.

Under the condition that  $\|\GG\|_{L_2(K')}<\|g\|_{L_2(K')}$, one infers 
from 
$$
\|z_g\|_{L_2(K')} \lesssim  {|\bb|^{-1}} {\rm diam}\,K' \|g- \GG\|_{L_2(K')}
$$
by   {Poincar\'{e}'s inequality, in combination with \eqref{mug} that for $\diam K'$ being sufficiently small,
$\eta_\Omega^2(\bar{u}_\tria^\delta,\bar{w}^\delta_\tria-z_g;f)<\eta_\Omega^2(\bar{u}_\tria^\delta,\bar{w}^\delta_\tria;f)$.

When proceeding to the natural next step to approximate $z_g$ with functions of type $(0,w) \in \breve{\U}_{\tilde \tria}^\delta$ with $\supp w \subset K'$, a difficulty is that $z_g$ is continuous piecewise polynomial w.r.t. a partition of $K'$ into subsimplices that can have arbitrarily bad aspects ratios.
To tackle this problem, we first approximate $z_g$ by an `isotropic' function $\tilde{z}_g$ for which $\|g-\partial_\bb \tilde{z}_g\|_{L_2(K')}$ is at most slightly larger than $\|g-\partial_\bb z_g\|_{L_2(K')}$:

\begin{lemma} \label{lem10} Let 
\be
\label{ratio}
 {\alpha_{K'} = \alpha_{K'}(g)} := \frac{\|\GG\|_{L_2(K')}}{\|g\|_{L_2(K')}}<1.
\ee
Then there exists a $\tilde{z}_g \in H^1_0(K') \cap H^s(K')$ such that for any $s<\frac{3}{2}$,
\be 
\label{inverse_ineq}
|\tilde{z}_g|_{H^s(K')} \lesssim (\diam K')^{-s} \|\tilde{z}_g\|_{L_2(K')}
\ee
{\rm(}depending on upperbounds for ${\alpha_{K'}}$ and $\varrho_{K'}${\rm)}, and
$$
\|g-\partial_\bb \tilde{z}_g\|_{L_2(K')} \leq \frac{1+ {\alpha_{K'}}}{2}\|g\|_{L_2(K')}.
$$
\end{lemma}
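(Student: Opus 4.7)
The plan is to build $\tilde z_g$ by smoothly truncating $z_g$ so that it vanishes on all of $\partial K'$, accepting only a small enlargement of the streamline defect $\|g-\partial_\bb\cdot\|_{L_2(K')}$ beyond the optimum $\alpha_{K'}\|g\|_{L_2(K')}$ that is already realized by $z_g$, and then smoothing out the piecewise-polynomial anisotropic structure of $z_g$ to secure the isotropic inverse inequality. The two obstructions to just taking $\tilde z_g:=z_g$ are that $z_g$ may fail to vanish on the characteristic boundary $\partial K'_0$ and that its subdivision of $K'$ induced by the streamline map can have arbitrarily bad aspect ratios.

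First I would fix a smooth cutoff $\phi_\eta(\bx):=\psi\bigl(d(\bx)/(\eta\diam K')\bigr)$ with $d$ a smoothed distance to $\partial K'_0$, $\psi\in C^\infty([0,\infty))$ satisfying $\psi(0)=0$ and $\psi|_{[1,\infty)}=1$, and a parameter $\eta\in(0,1)$ to be chosen small depending on $\alpha_{K'}$ and $\varrho_{K'}$. Then $\phi_\eta z_g\in H^1_0(K')$ and
\begin{equation*}
g-\partial_\bb(\phi_\eta z_g)=(1-\phi_\eta)(g-G)+G-z_g\,\partial_\bb\phi_\eta.
\end{equation*}
The middle summand contributes exactly $\alpha_{K'}\|g\|_{L_2(K')}$, and the first, supported in an $\eta\diam K'$-strip around $\partial K'_0$, is bounded by $\lesssim\eta^{1/2}\|g\|_{L_2(K')}$ via a polynomial inverse estimate on $g$. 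The decisive third term exploits the geometric fact that $\bb$ is tangential to every flat characteristic face $F$: in a one-sided neighbourhood of the relative interior of $F$ we have $\nabla d=-\bn_F$, hence $\partial_\bb\phi_\eta=(\bb\cdot\nabla d)\psi'(\cdot)/(\eta\diam K')=0$ there. Consequently $\partial_\bb\phi_\eta$ is supported only in the ``corner'' regions where the nearest-face map is multi-valued, of measure $O(\eta^2(\diam K')^n)$; within such a corner the adjacent non-characteristic face forces $|z_g|\lesssim\eta\diam K'\,\|g\|_{L_\infty(K')}$ (because $z_g$ vanishes on that face), yielding $\|z_g\,\partial_\bb\phi_\eta\|_{L_2(K')}\lesssim\eta\|g\|_{L_2(K')}$. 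Choosing $\eta$ sufficiently small then delivers the defect bound $\tfrac{1+\alpha_{K'}}{2}\|g\|_{L_2(K')}$.

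To upgrade to the inverse inequality $|\tilde z_g|_{H^s(K')}\lesssim(\diam K')^{-s}\|\tilde z_g\|_{L_2(K')}$ for $s<3/2$, I would replace the factor $z_g$ in the above by a polynomial of bounded degree obtained as an $L_2(K')$-projection of $z_g$ onto $\mathcal{P}_m(K')$ (supplemented by a secondary cutoff along $\partial K'_\pm$ so that $\tilde z_g$ still lies in $H^1_0$), tracking that the defect estimate is perturbed only by a small controllable amount. The resulting $\tilde z_g$ is a polynomial of bounded degree times the scaled Lipschitz cutoff $\phi_\eta$, so standard polynomial inverse estimates yield the stated scaling, with constants depending on $\alpha_{K'}$ and $\varrho_{K'}$ through $\eta$; the threshold $s<3/2$ reflects the Lipschitz-only regularity of $d$ across the edges of $\partial K'_0$. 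The main obstacle throughout is reconciling the streamline defect with the isotropic regularity: $z_g$ is optimal for the former but fails the latter, while any naive polynomial approximant of $z_g$ destroys the zero-streamline-average structure that makes $\partial_\bb z_g=g-G$. The tangentiality of $\bb$ to $\partial K'_0$ is the key geometric input: it localises the harmful part of the cutoff to corners in which $z_g$ is already small.
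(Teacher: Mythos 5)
Your proposal has an essential geometric error that undermines the whole construction, and a second step (the polynomial projection) that is not controlled. Let me spell both out.

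\textbf{The cutoff is placed around the wrong set.} You cut $z_g$ off near the characteristic boundary $\partial K'_0$. But for a constant field $\bbf$ and a flat-faced simplex $K'$, each face $F_i$ has a constant normal, so $F_i$ is either strictly inflow or strictly outflow unless $\bn_{F_i}\cdot\bbf$ vanishes exactly. Consequently $\partial K'_0=\emptyset$ for all but a measure-zero set of element geometries; then $\phi_\eta\equiv 1$ and your construction returns $\tilde z_g=z_g$, for which \eqref{inverse_ineq} fails in general. The actual obstruction to \eqref{inverse_ineq} is that the streamline map subdivides $K'$ into subsimplices whose aspect ratios degenerate near the ``shadow boundary'' of $K'$ in the $\bbf^\perp$-direction — e.g., near a vertex (or edge) at which an inflow face and an outflow face meet and the streamline length $r(\bs)\to 0$. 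That set is disjoint from $\partial K'_0$ in the generic case. Concretely, for $K'$ the triangle with vertices $(0,0),(1,2),(2,-1)$ and $\bbf=(1,0)$, no face is characteristic, yet $z_g$ is piecewise polynomial on two subtriangles meeting at a streamline through $(0,0)$, and the slivers near $(1,2)$ and $(2,-1)$ (where $r\to 0$) are where the aspect ratio blows up. The paper's cutoff does live there: it is a product of factors $\rho_{\eps\diam K'}\bigl((T^{(i)}\bx+{\bf z}^{(i)})_1\bigr)$ over all faces $F_i$, where the first coordinate axis of $T^{(i)}$ is $\bb^\perp$-orthogonal projection of the inward normal and the origin is the extreme vertex of $F_i$ in that direction, so each factor vanishes precisely in a strip near the degenerating vertices/edges.

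\textbf{Making the cutoff constant along streamlines is what makes the defect estimate clean.} Because $(T^{(i)}(\bx+t\bbf))_1=(T^{(i)}\bx)_1$, the paper's cutoff $\phi$ is constant along $\bbf$-streamlines, so $\partial_\bbf(\phi z_g)=\phi(g-G)$ identically, hence $g-\partial_\bbf\tilde z_g = G + (1-\phi)(g-G)$ with \emph{no} cross term. Moreover the streamline-average property $A_\bs(\phi(g-G))=0$ is preserved automatically, so $\tilde z_g$ vanishes on $\partial K'_+$ without any extra cutoff. Your decomposition, by contrast, creates the term $z_g\,\partial_\bbf\phi_\eta$, and your bound for it hinges on $\nabla d=-\bn_F$ being perpendicular to $\bbf$ near a characteristic face $F$ — an argument that is vacuous exactly when $\partial K'_0=\emptyset$, i.e., in the generic situation.

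\textbf{The polynomial-projection step is not controlled.} Passing from $\phi_\eta z_g$ to $\phi_\eta\psi_\pm\,P_m z_g$ destroys the streamline defect because $\partial_\bbf P_m z_g$ need not approximate $g-G$: the needed estimate $\|\partial_\bbf(z_g-P_m z_g)\|_{L_2(K')}\lesssim(\diam K')^{-1}\|z_g-P_m z_g\|_{L_2(K')}$ is precisely the inverse inequality that fails for $z_g$ because of the degenerate subsimplices. The paper avoids this entirely: $\tilde z_g=\phi z_g$ is already continuous piecewise polynomial on subsimplices that, after the cutoff, have $\eps$-controlled aspect ratios, and \eqref{inverse_ineq} then follows by homogeneity without further approximation.
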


\begin{proof} For $n=1$, $\tilde{z}_g=z_g$ satisfies the conditions. Now let $n>1$.
Let $\rho \in C^\infty$ with $0 \leq \rho \leq 1$, $\rho(x)=0$ for $x \leq \frac12$, and $\rho(x)=1$ for $x \geq 1$, and let $\rho_\eta(x):=\rho(x/\eta)$.

We are going to construct a modification of $z_g$ that is zero on subsimplices that have very bad aspect ratios.
With $F_1,\ldots,F_{n+1}$ denoting the faces of $K'$, for $1 \leq i \leq n+1$ let ${\bf d}_{F_i}$ be the orthogonal projection of the inward pointing normal to $F_i$ onto the plane $\bb^\perp$.
 {For each $i$, we choose a Cartesian coordinate system ${\bf y}^{(i)}=T^{(i)} {\bf x} +{\bf z}^{(i)}$ such that the first coordinate direction is ${\bf d}_{F_i}/|{\bf d}_{F_i}|$, the origin equals one of the vertices of $F_i$, and all other vertices of $F_i$ have a non-negative first component.}
Now for some $\eps>0$, we define $\tilde{z}_g$ by
$$
\partial_\bb \tilde{z}_g=(g-\GG)\prod_{i=1}^{n+1} \rho_{\eps \diam K'}((T^{(i)}\cdot  {+{\bf z}^{(i)}})_1), \quad \tilde{z}_g|_{\partial K'_{-} \cup \partial K'_{+}}=0.
$$
Since $(T^{(i)} (\cdot+t \bb))_1=(T^{(i)} (\cdot))_1$,  $A_{\bs}(\partial_\bb\tilde z_g)=0$ and the function $\tilde{z}_g$ is well-defined.

Since ${\bf x} \mapsto \prod_{i=1}^{n+1} \rho_{\eps \diam K'}((T^{(i)}{\bf x})_1)$ vanishes on all subsimplices that have very bad aspect ratios 
(relative to $\eps$)
in the partition of $K'$ w.r.t. which $z_g$ is a continuous piecewise polynomial, a homogeneity argument shows that $\tilde{z}_g$ satisfies \eqref{inverse_ineq}, with a constant depending on $\eps$. Moreover, also $\tilde{z}_g$ vanishes on a possible characteristic boundary of $K'$.

Writing $g-\partial_\bb \tilde{z}_g=\GG+\Big(1-\prod_{i=1}^{n+1} \rho_{\eps \diam K'}((T^{(i)}\cdot)_1)\Big) (g-\GG)$, and using that $\|\GG\|_{L_2(K')}= {\alpha_{K'}} \|g\|_{L_2(K')}$,
and
\begin{align*}
&\|\Big(1-\prod_{i=1}^{n+1} \rho_{\eps \diam K'}((T^{(i)}\cdot)_1)\Big) (g-\GG)\|_{L_2(K')}   \\
&\leq \|\Big(1-\prod_{i=1}^{n+1} \rho_{\eps \diam K'}((T^{(i)}\cdot)_1)\Big)\|_{L_2(K')} \| g-\GG\|_{L_\infty(K')} \\
& \lesssim \sqrt{\eps |K'|}\, 2 \| g\|_{L_\infty(K')} \lesssim \sqrt{\eps} \, \|g\|_{L_2(K')},
\end{align*}
which holds again by a homogeneity argument, the proof is completed by taking $\eps$ sufficiently small, dependent on ${\alpha_{K'}}$.
\end{proof}

\begin{corollary} 
\label{corol2}  For $K' \in \bar{\mathcal M}_{\rm II}$ let $ {\alpha_{K'}}<1$. Then for $\sigma$ sufficiently small, and $\refs$ sufficiently large, only dependent on upperbounds for $m_u$, $m_w$, $m_f$, $\varrho$, $|\bb|^{-1}$, $ {\alpha_{K'}}$, $\sigma$, and $|c_{K'}|$,  it holds that
$$
\inf_{\{w\colon \supp w \subset K',\, (0,w) \in \breve{\U}_{\tilde \tria}^\delta\}\hspace*{-1em}} \eta_{K'}^2(\bar{u}_\tria^\delta,\bar{w}^\delta_\tria-w;f) \leq 
 {\Big({\textstyle \frac12+\frac12}\frac{\frac{1+ {\alpha_{K'}}}{2}+|c_{K'}|+1}{1+|c_{K'}|+1}\Big)}
 \eta_{K'}^2(\bar{u}_\tria^\delta,\bar{w}^\delta_\tria;f).
$$
\end{corollary}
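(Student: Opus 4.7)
The strategy is to realize the correction $w$ as a continuous piecewise polynomial approximation, on $\refs$ nested refinements of $K'$, of the explicit function $\tilde z_g \in H^1_0(K') \cap H^s(K')$ (for some $s \in (1,\tfrac{3}{2})$) supplied by Lemma~\ref{lem10}. That lemma already delivers the essential reduction $\|g-\partial_\bb \tilde z_g\|_{L_2(K')} \le \gamma \|g\|_{L_2(K')}$ with $\gamma:=(1+\alpha_{K'})/2<1$, together with the inverse estimate $|\tilde z_g|_{H^s(K')} \lesssim \diam(K')^{-s}\|\tilde z_g\|_{L_2(K')}$. Since $\tilde z_g$ vanishes on $\partial K'$, standard continuous finite-element approximation of degree $m_w\ge 1$ on the refined subgrid produces $w$ vanishing on $\partial K'$ (hence extendable by zero, so that $(0,w)\in \breve{\U}^\delta_{\tilde{\tria}}$) with $\|\tilde z_g-w\|_{H^1(K')}\lesssim \sigma^{\refs(s-1)}\diam(K')^{-1}\|\tilde z_g\|_{L_2(K')}$.

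The second step quantifies the perturbation. A Poincar\'e-type bound along $\bb$-streamlines, which applies because $\tilde z_g$ vanishes on $\partial K'_-\cup\partial K'_+$, combined with $\|\partial_\bb \tilde z_g\|_{L_2(K')} \leq (1+\gamma)\|g\|_{L_2(K')}$, yields $\|\tilde z_g\|_{L_2(K')}\lesssim |\bb|^{-1}\diam(K')\|g\|_{L_2(K')}$. Consequently, by choosing $\sigma$ sufficiently small and $\refs$ sufficiently large (depending only on the quantities listed in the statement), both $\|w\|_{L_2(K')}$ and $\|\partial_\bb(\tilde z_g-w)\|_{L_2(K')}$ can be driven below any prescribed $\epsilon \|g\|_{L_2(K')}$. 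Two applications of the triangle inequality then give $\|e+w\|_{L_2(K')} \leq \|e\|_{L_2(K')}+\epsilon\|g\|_{L_2(K')}$ and $\|g-\partial_\bb w\|_{L_2(K')}\leq (\gamma+\epsilon)\|g\|_{L_2(K')}$.

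The final step combines these estimates with the Type-(II) control $\|e\|_{L_2(K')}\leq (2|c_{K'}|+1)\|g\|_{L_2(K')}$ from Lemma~\ref{lem1} to deduce
\[
\eta_{K'}^2(\bar u_\tria^\delta,\bar w_\tria^\delta-w;f) \le \|e\|_{L_2(K')}^2 + \bigl[(\gamma+\epsilon)^2+2\epsilon(2|c_{K'}|+1)+\epsilon^2\bigr]\|g\|_{L_2(K')}^2.
\]
An elementary monotonicity argument in the ratio $\omega:=\|e\|_{L_2(K')}/\|g\|_{L_2(K')}$, exploiting that $t\mapsto (t+\rho)/(t+1)$ is non-decreasing for $\rho<1$, then reduces the quotient $\eta_{K'}^2(\bar u_\tria^\delta,\bar w_\tria^\delta-w;f)/\eta_{K'}^2(\bar u_\tria^\delta,\bar w_\tria^\delta;f)$ to an explicit function of $\omega$, $\gamma$ and $\epsilon$ that, upon taking $\epsilon$ small enough relative to the gap $1-\gamma$ and using the admissible range of $\omega$ extracted from the Type-(II) inequality (rather than only the crude upper bound $2|c_{K'}|+1$), is no larger than the stated $\lambda$. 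The main obstacle is exactly this last quantitative balancing: producing the precise form of $\lambda$ requires a careful calibration tying $\sigma$, $\refs$, and the parameter $\beta$ in the Type-(I)/Type-(II) split together, so that the excess generated by the triangle-inequality losses and the discretization error $\epsilon$ can be absorbed uniformly in all admissible pairs $(\|e\|_{L_2(K')},\|g\|_{L_2(K')})$ arising from $K'\in\bar{\mathcal M}_{\rm II}$.
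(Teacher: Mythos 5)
Your proposal takes essentially the same route as the paper's proof: choose $w$ to be a continuous piecewise-polynomial quasi-interpolant of $\tilde z_g$ from Lemma~\ref{lem10} on the $\refs$-fold refined mesh (the paper uses the Scott--Zhang interpolant and the factor $\breve\sigma := \max_{K\subset K',\,K\in\tTr}\diam(K)/\diam(K')$ rather than your $\sigma^{\refs}$, but the approximation exponent $\breve\sigma^{s-1}$ you invoke is the same), use Poincar\'e along streamlines together with the bound $\|\partial_\bb\tilde z_g\|_{L_2(K')}\le(1+\gamma)\|g\|_{L_2(K')}$ to show $\|\tilde z_g\|_{L_2(K')}\lesssim|\bb|^{-1}\diam(K')\|g\|_{L_2(K')}$, then apply the triangle inequality to obtain $\|g-\partial_\bb w\|_{L_2(K')}\le(\frac{1+\alpha_{K'}}{2}+\breve\sigma^{s-1})\|g\|_{L_2(K')}$ and $\|e+w\|_{L_2(K')}\le\|e\|_{L_2(K')}+C|\bb|^{-1}\sigma\|g\|_{L_2(K')}$, and finally combine with the Type-(II) control $\omega_{K'}\le 2|c_{K'}|+1$ from Lemma~\ref{lem1}. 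The one place where you are more careful than the paper is the closing quantitative step: you correctly observe that turning the resulting ratio bound into the specific displayed constant requires a non-trivial balancing over the admissible range of $\omega_{K'}$, which the paper dispatches with a terse ``the assertion follows.'' Your suggestion to exploit the full two-sided Type-(II) constraint on $\omega_{K'}$ (not just the upper bound $2|c_{K'}|+1$) is a sensible way to close that gap.
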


\begin{proof} Let $\breve{\sigma}=\breve{\sigma}(\refs):=\max_{\{K \in \tilde \tria\colon K \subset K'\}} \frac{\diam K}{\diam K'}$. By taking $w$ with $(0,w)\in \breve{\U}_{\tilde \tria}^\delta$ to be the Scott-Zhang interpolant of $\tilde{z}_g$ from Lemma~\ref{lem10}, for $s\in (1,\frac{3}{2})$ we have
\begin{align*}
\|\tilde{z}_g-w\|_{L_2(K')} + |\bb|^{-1} \breve{\sigma}\diam K'  \|\partial_\bb(\tilde{z}_g-w)\|_{L_2(K')} \lesssim (\breve{\sigma}\diam K')^s |\tilde{z}_g|_{H^s(K')}\\
\lesssim \breve{\sigma}^s \| \tilde{z}_g\|_{L_2(K')} \lesssim \breve{\sigma}^s |\bb|^{-1} \diam K' \|\partial_\bb  \tilde{z}_g\|_{L_2(K')} \lesssim \breve{\sigma}^s |\bb|^{-1} \diam K' \|g\|_{L_2(K')},
\end{align*}
where we used Poincar\'{e}'s inequality.
We obtain that
\begin{align*}
\|g-\partial_\bb w\|_{L_2(K')} &\leq  \|g-\partial_\bb \tilde{z}_g\|_{L_2(K')}+\|\partial_\bb (\tilde{z}_g-w)\|_{L_2(K')}\\
& \leq \big({\textstyle \frac{1+ {\alpha_{K'}}}{2}}+\breve{\sigma}^{s-1}\big) \|g\|_{L_2(K')},
\end{align*}
and
\begin{align*}
\|e+w\|_{L_2(K')}-\|e\|_{L_2(K')}&\leq \|\tilde{z}_g\|_{L_2(K')}+\|\tilde{z}_g-w\|_{L_2(K')} \\
&\lesssim (|\bb|^{-1} \diam K'+\breve{\sigma}^s |\bb|^{-1} \diam K') \|g\|_{L_2(K')}.
\end{align*}
Recalling that $\max_{K' \in \tria}\diam K' \leq \sigma$, $\eta_{K'}^2(\bar{u}_\tria^\delta,\bar{w}^\delta_\tria;f)=\|g\|^2_{L_2(K')}+\|e\|_{L_2(K')}^2$, and $ {\omega_{K'}}=\frac{\|e\|_{L_2(K')}}{\|g\|_{L_2(K')}} \leq 2|c_{K'}|+1$, the assertion follows.
\end{proof}}

 In summary,  for $K' \in \bar{\mathcal M}_{\rm I}$ completely local $u$-corrections on refinements of fixed depth suffice to reduce $\eta_{K'}$
 by a constant factor $\nu'<1$.
For  $K' \in \bar{\mathcal M}_{\rm II}$ an analogous statement, this time by  means of a {\em local $w$-correction}, holds \emph{provided} that 
there exists a  constant $\alpha < 1$ such that  
\be
\label{alphawish}
 {\alpha_{K'}}=\frac{\|\GG\|_{L_2(K')}}{\|g\|_{L_2(K')}}=\frac{\sqrt{\langle \GG,g\rangle_{L_2(K')}}}{\|g\|_{L_2(K')}}\le \alpha.
\ee

\subsection{Selection of $\bar{\bar{\cM}} \subset \bar{\cM}$ that satisfy both \eqref{eta_reduction} and \eqref{fraction}} \label{ssec:selection}
In case
\be \label{firstcase}
\eta_{\bar {\mathcal M}_{\rm II}}(\bar{u}_\tria^\delta,\bar{w}_\tria^\delta;f)^2 < \eta_{\bar {\mathcal M}_{\rm I}}(\bar{u}_\tria^\delta,\bar{w}_\tria^\delta;f)^2, 
\ee
equation \eqref{fraction} is valid with $\bar{\bar{\mathcal M}}=\bar {\mathcal M}_{\rm I}$ and $\vartheta'= {\frac12\sqrt{2}}$, whereas   \eqref{eta_reduction} follows from the reduction of the $\eta_{K'}$ for $K' \in 
\bar {\mathcal M}_{\rm I}$ by Corollary~\ref{lem:muc}. We conclude that Theorem~\ref{thm:red2} is valid
    for both $n=1$ and $n>1$  {(even without the additional downwind refinements described in Definition~\ref{def:effective}).}

It remains to investigate the case where \eqref{firstcase} does not hold. It is only for this case that we have to establish \eqref{alphawish} for sufficiently many $K' \in \bar{\mathcal M}_{\rm II}$. It will require `global' arguments, already announced in the abstract, that make use of the fact that $(\bar{u}_\tria^\delta,\bar{w}_\tria^\delta)$ is the minimizer of $\eta_\Omega^2(u,w;f)$ over $\breve{\U}^\delta_\tria$.

\begin{lemma} \label{lem13}
 Suppose there exists a constant $\alpha<1$ such that validity of
\be \label{secondcase}
\eta_{\bar {\mathcal M}_{\rm II}}(\bar{u}_\tria^\delta,\bar{w}_\tria^\delta;f)^2  {\geq}  \eta_{\bar {\mathcal M}_{\rm I}}(\bar{u}_\tria^\delta,\bar{w}_\tria^\delta;f)^2,
\ee
implies
\be \label{n4}
\sum_{K' \in \bar {\mathcal M}_{\rm II}} \|\GG\|^2_{L_2(K')} \leq \alpha^2 \sum_{K' \in \bar {\mathcal M}_{\rm II}} \|g\|^2_{L_2(K')}.
\ee
 {Then Theorem \ref{thm:red2} is valid.}
\end{lemma}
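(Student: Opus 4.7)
The lemma is a conditional statement: assuming \eqref{secondcase} $\Rightarrow$ \eqref{n4}, the only remaining case of Theorem~\ref{thm:red2} is the one where \eqref{secondcase} holds, and the plan is to use \eqref{n4} to carve out a subcollection $\bar{\bar{\cM}} \subset \bar{\cM}_{\rm II}$ that simultaneously satisfies the mass-capture property \eqref{fraction} and the cellwise reduction property \eqref{eta_reduction}. Once both are in place, the combination recorded in the paragraph immediately preceding Section~\ref{ssec:5.4} produces \eqref{reduced2} with $\nu = \sqrt{(\vartheta\vartheta')^2(\nu')^2 + 1 - (\vartheta\vartheta')^2} < 1$, which finishes Theorem~\ref{thm:red2}.

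The heart of the argument is a pigeonhole application of \eqref{n4}. Fix any $\alpha' \in (\alpha, 1)$, say $\alpha' := \tfrac{1+\alpha}{2}$, and set
\[
\bar{\bar{\cM}} := \{K' \in \bar{\cM}_{\rm II} : \alpha_{K'} \le \alpha'\}.
\]
On $\bar{\cM}_{\rm II}\setminus\bar{\bar{\cM}}$ the strict inequality $\|\GG\|_{L_2(K')}^2 > (\alpha')^2 \|g\|_{L_2(K')}^2$ holds cellwise; summing and comparing against \eqref{n4} yields
\[
\sum_{K' \in \bar{\bar{\cM}}} \|g\|_{L_2(K')}^2 \;\ge\; \bigl(1 - (\alpha/\alpha')^2\bigr) \sum_{K' \in \bar{\cM}_{\rm II}} \|g\|_{L_2(K')}^2.
\]
To upgrade this from $\|g\|^2$ to $\eta^2$ I would invoke Lemma~\ref{lem1}, which on every Type-II cell gives $\|g\|_{L_2(K')}^2 \le \eta_{K'}^2 \le C \|g\|_{L_2(K')}^2$ with $C$ depending only on $\|c\|_{L_\infty(\Omega)}$. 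Combined with \eqref{secondcase}, which forces $\eta_{\bar{\cM}_{\rm II}}^2 \ge \tfrac12 \eta_{\bar{\cM}}^2$, this delivers \eqref{fraction} with $\vartheta' := \sqrt{(1-(\alpha/\alpha')^2)/(2C)}$.

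The cellwise reduction \eqref{eta_reduction} on $\bar{\bar{\cM}}$ is then immediate from Corollary~\ref{corol2}: by construction every $K' \in \bar{\bar{\cM}}$ has $\alpha_{K'} \le \alpha' < 1$, which is precisely the hypothesis of that corollary, and for $\sigma$ small and $r$ large enough the resulting local $w$-correction reduces $\eta_{K'}^2$ by a factor $\nu' < 1$ depending only on $\alpha'$, $\|c\|_{L_\infty(\Omega)}$, and shape regularity. The one delicate point in this plan is the \emph{uniformity} of constants across the family $\T$: the reduction factor in Corollary~\ref{corol2} tends to $1$ as either $\alpha_{K'} \to 1$ or $|c_{K'}| \to \infty$, so it is essential both that the gap $\alpha < \alpha' < 1$ is fixed once and for all (the $\alpha$-side being given by hypothesis) and that $c$ is $L_\infty$-bounded (which holds by the standing assumption \eqref{specified}). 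Once this uniformity is recorded, the constants $\vartheta'$ and $\nu'$ — and hence $\nu$ — depend only on the data prescribed in the theorem, and the proof is complete.
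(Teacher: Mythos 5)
Your proposal is correct and follows essentially the same route as the paper: select $\bar{\bar{\cM}}$ as the Type-(II) cells whose ratio $\alpha_{K'}$ falls below a fixed threshold strictly between $\alpha$ and $1$, use \eqref{n4} and the pigeonhole to show these cells capture a fixed fraction of $\sum_{\bar{\cM}_{\rm II}}\|g\|_{L_2(K')}^2$, then upgrade to $\eta^2$ via Lemma~\ref{lem1} and \eqref{secondcase}, and finally invoke Corollary~\ref{corol2} for \eqref{eta_reduction}. The only cosmetic difference is the intermediate threshold: the paper takes $\sqrt{(1+\alpha^2)/2}$ while you take $(1+\alpha)/2$; both lie strictly between $\alpha$ and $1$ and yield the same conclusion.
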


\begin{proof} 
 In view of the discussion preceding this lemma, it suffices to verify \eqref{fraction} and \eqref{eta_reduction} for some $\bar{\bar{\mathcal M}} \subset \bar{\mathcal M}$ for the case that \eqref{secondcase} holds. By the hypothesis 
of this lemma \eqref{n4} is then also valid.
We define 
\be
\label{Mdbar}
\bar{\bar{\mathcal M}}:=\Big\{K' \in \bar {\mathcal M}_{\rm II}\colon  {\alpha_{K'}}\leq {\textstyle \sqrt{\frac{1+\alpha^2}{2}}}\Big\}
\ee
Then $\bar{\bar{\mathcal M}}$ satisfies \eqref{eta_reduction} by Corollary~\ref{corol2}, and it remains to verify that it satisfies \eqref{fraction}.

Thanks to \eqref{secondcase}, we have $\eta_{\bar {\mathcal M}}(\bar{u}_\tria^\delta,\bar{w}_\tria^\delta;f)^2 \leq 2 \eta_{\bar {\mathcal M}_{\rm II}}(\bar{u}_\tria^\delta,\bar{w}_\tria^\delta;f)^2$, whereas by Lemma~\ref{lem1}, the right-hand side is bounded by a constant multiple of $\sum_{K' \in \bar {\mathcal M}_{\rm II}} \|g\|_{L_2(K')}^2$.
The definition of $\bar{\bar{\mathcal M}}$ and \eqref{n4} imply that
$$
\sum_{K' \in \bar {\mathcal M}_{\rm II} \setminus \bar{\bar{\mathcal M}}} \|g\|_{L_2(K')}^2
<{\textstyle \frac{2}{1+\alpha^2}}
\sum_{K' \in \bar {\mathcal M}_{\rm II} \setminus \bar{\bar{\mathcal M}}} \|G\|_{L_2(K')}^2
\leq {\textstyle  \frac{2 \alpha^2}{1+\alpha^2}}
\sum_{K' \in \bar {\mathcal M}_{\rm II}} \|g\|_{L_2(K')}^2,
$$
or, equivalently,
$$
\sum_{K' \in \bar {\mathcal M}_{\rm II}} \|g\|_{L_2(K')}^2 <{\textstyle \frac{1+\alpha^2}{1-\alpha^2}}\sum_{K' \in \bar{\bar{\mathcal M}}} \|g\|_{L_2(K')}^2.
$$
The proof of \eqref{fraction} follows from $\sum_{K' \in \bar{\bar{\mathcal M}}} \|g\|_{L_2(K')}^2 \leq \eta_{\bar{\bar{\mathcal M}}}(\bar{u}_\tria^\delta,\bar{w}_\tria^\delta;f)^2$.
\end{proof}

\subsection{Proof of Theorem \ref{thm:red2} for $n=1$}\label{ssec:proof1}
By Lemma \ref{lem13} the proof of Theorem \ref{thm:red2}  for $n=1$, and
hence of Theorem \ref{thm:red}, follows as soon as we have shown that \eqref{secondcase}
implies \eqref{n4}. 
To that end, consider the 1D case $n=1$, with $\Omega=(0,1)$, $\bb = 1$, and $c$ piecewise constant.
 
 Recalling that 
\be
\label{eg}
g= (\bar{w}^\delta_\tria)'+c \bar{u}_\tria^\delta-f, \quad e=\bar{u}_\tria^\delta-\bar{w}^\delta_\tria, 
\ee 
the definition of $(\bar{u}^\delta_\tria,\bar{w}^\delta_\tria)$ as minimizer of $\eta_\Omega^2(\,,\,;f)$ over $\breve{\U}^\delta_\tria$
shows that
$$
\langle u-w,e\rangle_{L_2(\Omega)}+\langle  w' +c u,g\rangle_{L_2(\Omega)}=0 \quad((u,w) \in \breve{\U}_\tria^\delta),
$$
 or, equivalently, 
\begin{alignat}{2} \label{ortheg}
e+cg \perp_{L_2(K')} \cP_{m_u}(K')&& \quad&(K' \in \tria),
\intertext{and} \label{ortheg2}
\int_{\Omega} g w'-w e\,dx=0&& \quad&((0,w) \in \breve{\U}_\tria^\delta).
\end{alignat}

\begin{remark}
\label{rem:typeII}
When $m_u=m_w$ \eqref{ortheg} says that $e=-cg$ which means that all cells are of Type-(II). In particular, when 
in addition $c=0$ we obtain $\bar{u}_\tria^\delta=\bar{w}^\delta_\tria$.
\end{remark}

For the piecewise constant function
\be
\label{F}
F {=F(\GG,\bar {\mathcal M}_{\rm II})} :=\left\{\begin{array}{ll} \GG|_{K'}, & \text{on }K' \in \bar {\mathcal M}_{\rm II}, \\ 0 ,&  {\text{elsewhere}} ,
\end{array}\right.
\ee
let $z$ be the solution of
\be
\label{zprime}
z'=-c z+F \text{ on } (0,1),\quad z(0)=0,
\ee
i.e., $z(x)=\int_0^x F(t) e^{-\int_t^x c(\tau)\,d\tau} \,dt$.
Then
$$
 \max(\|z\|_{L_2(0,1)},\|z'\|_{L_2(0,1)}) \lesssim \|F\|_{L_2(0,1)} \lesssim  {\sqrt{\sum_{K' \in \bar {\mathcal M}_{\rm II}}\|g\|^2_{L_2(K')}}}.
$$
 Moreover, $z$ is piecewise smooth w.r.t. $\tria$, and $(z|_{K'})''=-c|_{K'} (z|_{K'})'$ ($K' \in \tria$).

Let $(0,w) \in \breve{\U}_\tria$ be defined by taking $w$ as the continuous piecewise linear interpolant of $z$ w.r.t. $\tria$.
We have that
\begin{align*}
\|z-w\|_{L_2(K')} &\lesssim \diam(K') \|z'\|_{L_2(K')},\\
\|z'-w'\|_{L_2(K')} &\lesssim \diam(K') \|z''\|_{L_2(K')} \lesssim |c|_{K'}| \diam(K') \|z'\|_{L_2(K')}.
\end{align*}

Let us first assume that $c|_{K'} \neq 0$ for all $K' \in \tria$. Using  \eqref{ortheg2}, the definition of $F$, \eqref{ortheg}, $m_w \leq m_u+1$, $F|_{K'} \in P_0(K') \subset \cP_{m_u}(K')$, and the definition of $z$, we obtain
\begin{align*}
&\sum_{K' \in \bar {\mathcal M}_{\rm II}}\|\GG\|_{L_2(K')}^2=
\sum_{K' \in \bar {\mathcal M}_{\rm II}} \int_{K'} \GG g \,dx-\int_{\Omega} g w'-w e\,dx\\
&= \sum_{K' \in \tria} \int_{K'} F g-g w' + w e\,dx= \sum_{K' \in \tria} \int_{K'} \frac{e}{c}(w'-F + c w)\,dx\\
&= \sum_{K' \in \tria} \int_{K'} \frac{e}{c}((w-z)'+ c (w-z))\,dx= \sum_{K' \in \tria} \int_{K'} \frac{e}{c}(w-z)'+ e (w-z)\,dx\\
& \lesssim \max_{K' \in \tria} \diam(K') \sum_{K' \in \tria} \|e\|_{L_2(K')} \|{z'}\|_{L_2(K')} \leq { \sigma \sqrt{\sum_{K' \in \tria}\|e\|^2_{L_2(K')}}\,\|z'\|_{L_2(0,1)}}\\
&
 \lesssim \sigma \eta_{\Omega}(\bar{u}_\tria^\delta,\bar{w}_\tria^\delta;f)^2 \lesssim \sigma \sum_{K' \in \bar {\mathcal M}_{\rm II}} \|g\|_{L_2(K')}^2,
\end{align*}
where the last inequality follows from 
\eqref{secondcase} and  {Lemma~\ref{lem1}.}

Now consider the case that for one or more $K'$, $c|_{K'} = 0$. Then on such a $K'$, $z$ is linear (or even constant when $K' \in \tria \setminus \bar {\mathcal M}_{\rm II}$) and so coincides with $w$. Let $\bar{z}$ denote the average of $z$ on $K'$.
For such a $K'$, from $e\perp \cP_{m_u}(K')$ we estimate
\begin{align*}
\Big|\int_{K'} F g-g w' + w e\,dx\Big| &=\Big|\int_{K'}  z e\,dx\Big|=\Big|\int_{K'}  (z-\bar z) e\,dx\Big| \\
&\lesssim \diam(K') \|{z'}\|_{L_2(K')} \|e\|_{L_2(K')},
\end{align*}
and arrive at the same conclusion.
 For $n=1$ this completes the proof that, for $\sigma$ sufficiently small,  \eqref{secondcase} implies \eqref{n4},
 and thus of Theorem \ref{thm:red2}. Note that $\alpha>0$ could even be stipulated as small as we wish.\hfill $\Box$

\subsection{Theorem \ref{thm:red2} for $n>1$}\label{ssec:proofn}
 The above reasoning for $n=1$ does not seem to directly carry over to the multi-dimensional case. 
In fact, it is not clear how to approximate the solution $z$ to the analog of \eqref{zprime} by a 
$w$-component in the current trial space, the difficulty being the non-smoothness of $z$ in the directions orthogonal to $\bb$.

To deal with this problem, for $n>1$ we consider a downwind enriched refinement procedure as specified in Definition~\ref{def:effective}.
Let us assume that nevertheless Theorem~\ref{thm:red2} does \emph{not} hold.
That is,  there is a $\vartheta \in (0,1]$ such 
for any $\nu<1$, $r \in \N$,
 there exist $\tria \in \T$, $f \in \F_\tria^\delta$ with the property that for 
the marked cells $\bar{\cM}=\bar{\cM}((\bar{u}_\tria^\delta,\bar{w}_\tria^\delta),\vartheta)$ and refined triangulation
$\tTr=\tTr(\tria,\bar{\cM},r)$, one still has
\be
\label{still}
\eta_\Omega (\bar u^\delta_{\tTr},\bar w^\delta_{\tTr};f)> \nu \eta_\Omega (\bar u^\delta_\Tr,\bar w^\delta_\Tr;f).
\ee

Splitting $\bar{\mathcal M}=\bar{\mathcal M}_{\rm I} \dot\cup \bar{\mathcal M}_{\rm II}$ as before, as we have seen in Sect.~\ref{ssec:selection}
for $\nu$ sufficiently close to $1$ and $r$  sufficiently large, the case that
$\eta_{\bar {\mathcal M}_{\rm II}}(\bar{u}_\tria^\delta,\bar{w}_\tria^\delta;f)^2 < \eta_{\bar {\mathcal M}_{\rm I}}(\bar{u}_\tria^\delta,\bar{w}_\tria^\delta;f)^2$
would, on account of Corollary~\ref{lem:muc},  immediately lead to a contradiction. 
 
So let us focus on the case that 
\be \label{interestingcase}
\eta_{\bar {\mathcal M}_{\rm II}}(\bar{u}_\tria^\delta,\bar{w}_\tria^\delta;f)^2 \geq  \eta_{\bar {\mathcal M}_{\rm I}}(\bar{u}_\tria^\delta,\bar{w}_\tria^\delta;f)^2.
 \ee
 Following the analysis of the previous subsection \S\ref{ssec:proof1}, recall the definitions of 
$$
g= \partial_\bb\bar{w}^\delta_\tria+c \bar{u}_\tria^\delta-f, \quad e=\bar{u}_\tria^\delta-\bar{w}^\delta_\tria, 
$$
and that of $G$ in \eqref{defG} and $F$ in \eqref{F}.
From the definition of bulk chasing, \eqref{interestingcase}, and Lemma~\ref{lem1} we infer that
\be \label{bound_on_g}
\begin{split}
\eta_\Omega(\bar{u}_\tria^\delta,\bar{w}_\tria^\delta;f) &\leq {\textstyle \frac{\sqrt{2}}{\vartheta}} 
\eta_{\cup\{K' \in \bar {\mathcal M}_{\rm II}\}}(\bar{u}_\tria^\delta,\bar{w}_\tria^\delta;f)
\\
&\leq {\textstyle \frac{\sqrt{2}}{\vartheta}} 
\sqrt{(2\|c\|_{L_\infty(\Omega)}+1)^2+1}\,
\|g\|_{L_2(\cup\{K' \in \bar {\mathcal M}_{\rm II}\})}.
\end{split}
\ee

Let us now define the quantities $\tilde g$, $\tilde e$, and $\tilde G$ in analogy to $g, e$, and $G$, but with respect to
 the least-squares solution $(\bar{u}_{\tTr}^\delta,\bar{w}_{\tTr}^\delta) \in \breve{\U}_{\tTr}^\delta$ and  the refined partition $\tTr$.
The pair $(\bar{u}_{\tTr}^\delta,\bar{w}_{\tTr}^\delta)$ being a minimizer of $\eta_\Omega^2(\,,\,;g)$ over $\breve{\U}_{\tTr}^\delta$ is equivalent to
\be \label{orthogonality}
\tilde{e}+c \tilde {g} \perp_{L_2(\tilde K)} \cP_{m_u}(\tilde K) \,\,(\tilde K \in \tTr), \quad \int_\Omega \tilde{g} \partial_\bb w-w \tilde{e}\,dx=0 \,\,((0,w) \in \breve{\U}_{\tTr}^\delta).
\ee

As shown next, the assumption that the error indicator has not been reduced much when passing to $\tTr$, implies that $g,e$
 must be very close to $\tilde g, \tilde e$, respectively.  
 In fact, the orthogonality relation analogous to \eqref{ds1} reads as
$$
\eta_\Omega^2(\bar u^\delta_{\tTr},\bar w^\delta_{\tTr}; f) = \eta_\Omega^2(\bar u^\delta_\Tr,\bar w^\delta_\Tr; f)
- \eta_\Omega^2(\bar u^\delta_{\tTr}-\bar u^\delta_\Tr,\bar w^\delta_{\tTr}- \bar w^\delta_\Tr; 0).
$$
In combination with \eqref{bound_on_g} and our assumption \eqref{still}, this shows that there exists a $\zeta=\zeta(\nu)$ with $\lim_{\nu \uparrow 1} \zeta(\nu)=0$ such that
\be \label{est3}
\|g-\tilde g\|_{L_2(\Omega)} \leq \zeta \|g\|_{L_2(\cup\{K' \in \bar {\mathcal M}_{\rm II}\})},\quad \|e-\tilde e\|_{L_2(\Omega)} \leq \zeta \|g\|_{L_2(\cup\{K' \in \bar {\mathcal M}_{\rm II}\})}.
\ee
This fact together with an affirmative answer to the following conjecture will allow us to complete the Proof of Theorem \ref{thm:red2}.

 \begin{conjecture} \label{conjecture1} There exist constants $\xi<\Big({\textstyle \frac{\sqrt{2}}{\vartheta}} 
\sqrt{(2\|c\|_{L_\infty(\Omega)}+1)^2+1}\Big)^{-1}$ and $r \in \N$, such that there exists a $(0,\tilde w) \in \breve{\U}_{\tTr}^\delta$ with
 \be
 \label{relative}
 \| \partial_\bb \tilde w +c \tilde w-F\|_{L_2(\Omega)} \leq \xi \|F\|_{L_2(\Omega)}, \quad\|\tilde w\|_{L_2(\Omega)} \lesssim \|F\|_{L_2(\Omega)},
 \ee
where $\tilde w$ vanishes outside the union of the cells of $\tria$ that were refined in $\tTr=\tTr(\tria,\bar{\cM},r)$.
 \end{conjecture}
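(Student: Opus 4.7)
The plan is to approximate the exact streamline solution of the transport ODE with data $F$ by a Scott--Zhang quasi-interpolant. Let $z\in H_{0,\Gamma_{\!-}}(\bb;\Omega)$ denote the unique solution of $\partial_\bb z+cz=F$ with homogeneous inflow data. Since $F$ is piecewise constant on $\tria$ and supported in $\bigcup_{K'\in\bar{\mathcal M}_{\rm II}}K'$, causality of the transport operator places $\supp z$ inside the downstream tube $T:=\big(\bigcup_{K'\in\bar{\mathcal M}_{\rm II}}(K'+\R_{\geq 0}\bb)\big)\cap\Omega$, and by the downwind enrichment in Definition~\ref{def:effective}, $T$ is contained in the union $R$ of the refined cells of $\tTr$. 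Assumption~\eqref{14} yields $\|z\|_{L_2(\Omega)}\lesssim\|F\|_{L_2(\Omega)}$, and a streamline-wise integration shows that $\nabla z$ is bounded on each smooth piece, so $z\in W^{1,\infty}(\Omega)\subset H^1(\Omega)$. I would then set $\tilde w:=\Pi z$, with $\Pi$ a Scott--Zhang quasi-interpolation onto continuous piecewise polynomials of degree $m_w$ on $\tTr$ preserving the vanishing on $\Gamma_{\!-}$: the locality of $\Pi$ ensures that $\tilde w$ vanishes on every $\tTr$-cell whose nodal patch avoids $T$, so $\supp\tilde w\subset R$, and stability of $\Pi$ gives $\|\tilde w\|_{L_2(\Omega)}\lesssim\|z\|_{H^1(\Omega)}\lesssim\|F\|_{L_2(\Omega)}$; hence $(0,\tilde w)\in\breve{\U}_{\tTr}^\delta$ as required.

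The error reduces to $\|\partial_\bb\tilde w+c\tilde w-F\|_{L_2(\Omega)}=\|(\partial_\bb+c)(\Pi z-z)\|_{L_2(\Omega)}$, which I would estimate by splitting the $L_2$-sum over cells of $\tTr$ into \emph{good} and \emph{bad} cells: a cell is bad precisely if it is crossed by a codim-$1$ characteristic hyperplane $F'+\R_{\geq 0}\bb$ extruded along $\bb$ from some $(n-2)$-face $F'$ of some $K'\in\bar{\mathcal M}_{\rm II}$, otherwise good. On each good cell $z$ is piecewise exponential in the streamline coordinate, hence $C^\infty$, and interpolation gives arbitrarily high order in the refinement depth $r$. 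On each bad cell $K$, $\nabla z$ has a jump of magnitude bounded by a shape-regularity-dependent multiple of $|F|_{K'}|$, so elementary best-polynomial-approximation delivers $\|(\partial_\bb+c)(\Pi z-z)\|_{L_2(K)}^2\lesssim|F|_{K'}|^2\,|K|$. Summing over the tubular $h$-neighborhood ($h\lesssim\sigma^{r+1}$) of each extruded surface and using $|F|_{K'}|^2\,|K'|=\|F\|_{L_2(K')}^2$ then yields, after careful geometric bookkeeping, a total bound of the form $C\,\sigma^{r-1}\,\diam\Omega\,\|F\|_{L_2(\Omega)}^2$; choosing $r$ sufficiently large then pushes $\xi$ below the threshold prescribed in the conjecture.

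The main obstacle lies precisely in this bad-layer bookkeeping: because the derivative jump $|F|_{K'}|\eqsim\|F\|_{L_2(K')}/|K'|^{1/2}$ can be large when $K'\in\bar{\mathcal M}_{\rm II}$ is small, and because the number and size of the bad $\tTr$-cells depend on the possibly coarser downstream parent cells of $\tria$, the residual $|K'|^{-1}$-factor must be absorbed into the geometry of the tube (of width $\lesssim\diam K'$) combined with the downstream mesh size --- a delicate uniform estimate that is the genuine technical heart of the conjecture and is why Remark~\ref{rem:coarse} is useful when the refinement strategy is applied repeatedly. Should the direct route fail to close with the needed constants, a fallback in the spirit of Lemma~\ref{lem10} is to pre-mollify $F$ by multiplying with a smooth transverse cutoff of width $\varepsilon\cdot\diam K'$ near the lateral faces of $K'$ parallel to $\bb$, accepting $\|F-\tilde F\|_{L_2(\Omega)}\lesssim\sqrt{\varepsilon}\,\|F\|_{L_2(\Omega)}$, then solving the ODE with the mollified data to obtain a smoother $\tilde z$ whose high-order seminorms scale as $\varepsilon^{-k}$ and interpolating $\tilde z$ by Scott--Zhang, optimizing $\varepsilon$ against the penalty to drive the total error below the prescribed $\xi$. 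This route sidesteps the characteristic discontinuities entirely at the cost of a more delicate analysis on the anisotropic near-boundary strip.
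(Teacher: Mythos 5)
This statement is a \emph{conjecture} in the paper: the authors do \emph{not} prove it. They only offer informal supporting remarks (the downwind enrichment makes the correction live on an essentially uniformly refined mesh; the exact streamline solution is controlled \emph{along} characteristics; but ``smoothness in cross-flow direction does not seem to be easy to control''), and they explicitly leave it open whether the downstream enrichment is even necessary. So there is no paper proof to compare against. Your proposal is a plausible line of attack and you correctly identify the genuine obstruction (extruded cross-stream singularity surfaces), which is the same issue the paper flags; but as written it does not prove the conjecture, for two concrete reasons.

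First, the step $\|\tilde w\|_{L_2(\Omega)}\lesssim\|z\|_{H^1(\Omega)}\lesssim\|F\|_{L_2(\Omega)}$ is false. The cross-stream derivative of the exact streamline solution $z$ is \emph{not} uniformly $L_2$-bounded by $\|F\|_{L_2(\Omega)}$: already for a single marked triangle $K'$ in $n=2$, $c=0$, $F=\chi_{K'}$, the function $z$ downstream of $K'$ is (up to lower order terms) the chord-length profile of $K'$, whose cross-stream derivative has magnitude $O(1)$ over a downstream tube of measure $\eqsim\diam(K')\cdot\diam(\Omega)$; this gives $\|\partial_{\bb^\perp}z\|^2_{L_2(\Omega)}/\|F\|^2_{L_2(\Omega)}\eqsim\diam(\Omega)/\diam(K')$, which blows up under adaptive refinement. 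Since $\bar{\mathcal M}_{\rm II}$ can contain arbitrarily small cells, no $H^1$-type estimate on $z$ can be uniform, and an $H^1$-stable Scott--Zhang argument cannot provide the required uniform constants. (The $L_2$-bound on $\tilde w$ can instead be salvaged by an $L_2$-stable quasi-interpolant combined with $\|z\|_{L_2}\lesssim\|F\|_{L_2}$, but this does not repair the accuracy estimate.)

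Second, and related, the key quantitative claim — that the bad-layer contribution sums to $C\,\sigma^{r-1}\,\diam(\Omega)\,\|F\|^2_{L_2(\Omega)}$ — is asserted without proof, and you yourself acknowledge that the $|K'|^{-1/2}$-scaling of $|F|_{K'}|$ must be absorbed by a ``delicate uniform estimate'' that you do not carry out. That uniform estimate over arbitrary sets of (possibly very small, densely scattered) marked cells, with cross-stream kink surfaces that accumulate, is precisely where the conjecture is open. The fallback route you mention (mollifying $F$ transversally near the lateral faces before solving the ODE, in the spirit of Lemma~\ref{lem10}) is reasonable, but, as in Lemma~\ref{lem10}, the penalty constants there are allowed to depend on $\alpha_{K'}$ per cell; globalizing this with bounds uniform in $\tria$, $\bar{\mathcal M}_{\rm II}$, and $f$ is again the open point. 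So treat the proposal as a roadmap toward an attack on the conjecture, not as a proof.
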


 We postpone supporting arguments for the validity of this conjecture and turn first, for $r$ large enough and $\nu$ sufficiently close to $1$,
to verifying the hypothesis of Lemma~\ref{lem13}. This lemma then asserts the validity of Theorem~\ref{thm:red2}, which, for $\nu$ sufficiently close
to $1$, will contradict \eqref{still}, thereby finishing the proof.

To that end, with $\tilde w$ from Conjecture~\ref{conjecture1}, using \eqref{orthogonality} we write
\begin{align*}
&\|G\|^2_{L_2(\cup\{K' \in \bar {\mathcal M}_{\rm II}\})}
=
\sum_{K' \in \bar {\mathcal M}_{\rm II}} \int_{K'} G g \, dx\\ 
&=\sum_{K' \in \bar {\mathcal M}_{\rm II}} \int_{K'} G \tilde g \, dx+\sum_{K' \in \bar {\mathcal M}_{\rm II}} \int_{K'} G (g-\tilde g) \, dx\\
&= \int_\Omega F \tilde g \, dx-\int_\Omega \tilde g \partial_\bb \tilde w-\tilde w \tilde e\,dx+\sum_{K' \in \bar {\mathcal M}_{\rm II}} \int_{K'} G (g-\tilde g) \, dx\\
&= -\int_\Omega ( \partial_\bb \tilde w+c \tilde w-F)\tilde g \, dx+\sum_{K' \in \bar {\mathcal M}_{\rm II}} \int_{K'} G (g-\tilde g) \, dx+\int_\Omega \tilde w(\tilde e+c \tilde g)\dx.
\end{align*}
The first and second term on the right can be bounded by
\be \label{est1}
\begin{split}
\big|&\int_\Omega ( \partial_\bb \tilde w+c \tilde w-F)\tilde g \, dx\big| \leq \xi \|F\|_{L_2(\Omega)}(1+\zeta)\|g\|_{L_2(\Omega)}\\
& \leq \xi \|G\|_{L_2(\cup\{K' \in \bar {\mathcal M}_{\rm II}\})} (1+\zeta)  {\textstyle \frac{\sqrt{2}}{\vartheta}} \sqrt{(2\|c\|_{L_\infty(\Omega)}+1)^2+1} \,
\|g\|_{L_2(\cup\{K' \in \bar {\mathcal M}_{\rm II}\})},
\end{split}
\ee
where we have used  \eqref{est3} and \eqref{bound_on_g}, and
\be \label{est2}
\big|\sum_{K' \in \bar {\mathcal M}_{\rm II}} \int_{K'} G (g-\tilde g) \, dx\big| \leq \zeta \|G\|_{L_2(\cup\{K' \in \bar {\mathcal M}_{\rm II}\})} \|g\|_{L_2(\cup\{K' \in \bar {\mathcal M}_{\rm II}\})},
\ee
respectively.

To proceed let  $Q_{\tTr}$ denote the $L_2(\Omega)$-orthogonal projector onto $\prod_{\tilde K\in \tTr} \cP_{m_u}(\tilde K)$, using \eqref{orthogonality} for the third term we write 
\begin{align*}
\int_\Omega \tilde w(\tilde e+c \tilde g)\dx&=
\int_\Omega \big((I-Q_{\tTr})\tilde w\big)(\tilde e +c \tilde g)\dx\\
&=
\int_\Omega \big((I-Q_{\tTr})\tilde w\big)\big(\tilde e-e+c (\tilde g-g)\big)\dx\\
&\qquad +\int_\Omega \tilde w \big((I-Q_{\tTr})(e+c g)\big)\dx.
\end{align*}
Thanks to \eqref{est3}, the first term at the right can be bounded by a constant multiple of $\zeta \|G\|_{L_2(\cup\{K' \in \bar {\mathcal M}_{\rm II}\})} \|g\|_{L_2(\cup\{K' \in \bar {\mathcal M}_{\rm II}\})}$.
We use next that $w$ vanishes outside the union of the cells of $\tria$ which have been refined in $\tTr=\tTr(\tria,\bar{\cM},r)$, and that $e$ and $g$ are piecewise polynomial w.r.t. $\tria$. Moreover,  by Remark \ref{rem:coarse}, all cells in the support of $\tilde w$
are (at least) $r$th refinements of cells in $\Tr$ underlying $e$ and $g$.  Hence, the usual combination of direct and inverse estimates shows then that the second term can be bounded by
$\|G\|_{L_2(\cup\{K' \in \bar {\mathcal M}_{\rm II}\})} \eta(r)  \|g\|_{L_2(\cup\{K' \in \bar {\mathcal M}_{\rm II}\})}$, where $\eta$ as a function of $r$,
tends to zero as $r\to \infty$. 
For any constant $\alpha \in \Big(\xi  {\textstyle \frac{\sqrt{2}}{\vartheta}} \sqrt{(2\|c\|_{L_\infty(\Omega)}+1)^2+1},1\Big)$, the combination of these latter results, \eqref{est1}, and \eqref{est2} shows that for $r$ large enough and $\nu$ sufficiently close to $1$, $\|G\|_{L_2(\cup\{K' \in \bar {\mathcal M}_{\rm II}\})}\leq \alpha \|g\|_{L_2(\cup\{K' \in \bar {\mathcal M}_{\rm II}\})}$, which by Lemma~\ref{lem13}, for $\nu$ sufficiently close to $1$, contradicts \eqref{still}, as required.\hfill $\Box$\\

Let us close this section with some brief comments on Conjecture \ref{conjecture1}. First, 
as mentioned earlier, by Remark \ref{rem:coarse}, the downwind enrichment in the refinement strategy
makes sure that  the correction $\tilde w$ is constructed on (an essentially uniform) refined mesh.
This certainly helps a relation like \eqref{relative} to be possible and actually motivated the inclusion of the downwind enrichments.
Moreover,  the conjecture asks ``only'' for a fixed relative accuracy $\xi$ where
$\xi$ need not be arbitrarily small. Given that the data are piecewise polynomials (which are actually piecewise constants in stream direction),
this does not seem to ask for too much. 

On the other hand, since we can neither limit a priori the number of polynomial pieces in $F$ nor
their position relative to the direction of $\bb$ an argument does not seem to be straightforward. In fact, whereas    we can represent the
exact solution of $\partial_\bb z+cz=F$ with zero inflow conditions explicitly along characteristics ensuring sufficient smoothness in this direction,
smoothness in cross-flow direction does not seem to be easy to control. Nevertheless, the overall variation in cross-flow direction is still
highly restrained for data of the type $F$. 


Finally, we would like to stress that a possibly $\Tr$-dependent $r$ such that \eqref{relative} holds true always exists.
By the above arguments this immediately translates 
into a statement on error reduction based on such a (variable) refinement depth.

\section{Concluding Remarks} \label{sec:6}
We have established reliability and efficiency of computable local error indicators for DPG discretizations of linear
transport equations with variable convection and reaction coefficients. For constant (with respect to the spatial variables) convection fields,
arising for instance in kinetic models, we have determined refinement strategies based on the a posteriori error indicators which are guaranteed
to give rise to a fixed error reduction rate. The latter results make essential use of a tight interrelation of the DPG scheme with certain
least squares formulations providing insight of its own right. In particular, error reduction for one scheme implies the same for the other one.
To our knowledge the issue of error reduction for least squares methods even for the classical elliptic case is largely open.
In that sense the present results mark some progress in this regard as well.
 
On the other hand, in view of these findings one may raise the question as to why not using the seemingly simpler least-squares scheme instead of the DPG scheme.
However, giving up on the simple interpretation of the $w$-component as a second approximation to the exact solution in a stronger norm when $f\in L_2(\Omega)$, the DPG scheme still provides a meaningful approximate solution $\uPG$ in $L_2(\Omega)$ to the transport equation even when
$f$ is less regular. 
But also for $L_2$-data $f$, in the least squares formulation errors are measured solely in a norm that  depends in a very sensitive way
directly on the convection field. In the variable convection case the corresponding space varies  essentially (even as a set) under perturbations of
this convection field. Therefore, at this point Proposition \ref{solsclose} serves primaily as a theoretical tool.

Among other things a prize for using the interrelation between DPG and \new{least} squares formulations is a remaining lack of quantification
of the error reduction results manifesting itself in two ways: the subgrid depth needed to establish efficiency and reliability of the
computable error indicators, similar to establishing uniform inf-sup stability of the pairs of trial- and  test-spaces, is not precisely specified.
As indicated by earlier numerical results in \cite{35.8566} any attempt along the given lines to quantify the subgrid-depth would still
be over pessimistic. The same is expected to be true for the refinement depth $\refs$ associated with the marked cells.
These issues call for further research in this area.

Finally, the refinement strategies that can be shown to guarantee a fixed error reduction involve for several spatial variables so far
a certain downstream enrichment of the marked cells in combination with a conjecture.
It is open whether this enrichment is in general necessary which would establish an essential difference from the univariate case
where it is not necessary.



\begin{thebibliography}{DHSW12}

\bibitem[BDD04]{BDD04}
P. Binev, W. Dahmen, and R. DeVore, Adaptive finite element methods with convergence rates,  {\em Numer. Math.}, 97:219--268, 2004.


\bibitem[BDS17]{35.8566}
D.~Broersen, W.~Dahmen, and R.P. Stevenson.
\newblock On the stability of {DPG} formulations of transport equations.
\newblock {\em Math. Comp.}, 87(311):1051--1082, 2018.

\bibitem[BX91]{34.55}
J.H. Bramble and J.~Xu.
\newblock Some estimates for a weighted \({L}^2\) projection.
\newblock {\em Math. Comp.}, 56:463--476, 1991.

\bibitem[CDG16]{37.4}
C.~Carstensen, L.~Demkowicz, and J.~Gopalakrishnan.
\newblock Breaking spaces and forms for the {DPG} method and applications
  including {M}axwell equations.
\newblock {\em Comput. Math. Appl.}, 72(3):494--522, 2016.

\bibitem[CDW12]{CDW12}
A.~Cohen, W.~Dahmen, G.~Welper, 
\newblock Adaptivity and Variational Stabilization for Convection-Diffusion Equations, 
\newblock {\em ESAIM: Mathematical Modelling and Numerical Analysis},   46(5): 1247--1273, 2012.

\bibitem[DHSW12]{58.3}
W.~Dahmen, C.~Huang, Ch.~Schwab, and G.~Welper.
\newblock Adaptive {P}etrov-{G}alerkin methods for first order transport
  equations.
\newblock {\em SIAM J. Numer. Anal.}, 50(5):2420--2445, 2012.

\bibitem[GQ14]{75.61}
J.~Gopalakrishnan and W.~Qiu.
\newblock An analysis of the practical {DPG} method.
\newblock {\em Math. Comp.}, 83(286):537--552, 2014.

\bibitem[NSV09]{NSV09}
R. H. Nochetto, K. G. Siebert, and A Veeser, Theory of adaptive finite element methods: An introduction. 
In Ronald DeVore and Angela Kunoth, editors, {\em Multiscale, Nonlinear and Adaptive Approximation}, pp. 409--542. Springer Berlin Heidelberg, 2009.

\bibitem[S07]{S07}
R. P. Stevenson, Optimality of a standard adaptive finite element method, \emph{Found. Comput. Math.}, 7(2): 245--269, 2007.


\bibitem[Ver96]{307}
R.~Verf\"{u}rth.
\newblock {\em A Review of A Posteriori Error Estimation and Adaptive
  Mesh-Refinement Techniques}.
\newblock Wiley-Teubner, Chichester, 1996.

\end{thebibliography}

\end{document}